\documentclass[12pt]{article}
\usepackage{amsmath,amsfonts,amssymb,amsthm}
\usepackage{color,enumerate}
\newtheorem{theorem}{Theorem}[section]
\newtheorem{proposition}[theorem]{Proposition}
\newtheorem{corollary}[theorem]{Corollary}
\newtheorem{lemma}[theorem]{Lemma}

\theoremstyle{remark}
\newtheorem{remark}[theorem]{Remark}

\newtheorem{example}[theorem]{Example}

\numberwithin{equation}{section}

\newcommand{\nc}{\newcommand}

\nc{\I}{{\mathbf 1}}
\nc{\bS}{{\mathbf S}}
\nc{\bN}{{\mathbf N}}
\nc{\bM}{{\mathbf M}}
\nc{\cB}{{\mathcal B}}
\nc{\cX}{{\mathcal X}}
\nc{\cM}{{\mathcal M}}
\nc{\R}{{\mathbb R}}
\nc{\N}{{\mathbb N}}
\nc{\Z}{{\mathbb Z}}
\nc{\BX}{{\mathbb X}}
\nc{\bx}{\mathbf{x}}
\nc{\bH}{\overline{H}}
\nc{\bP}{\overline{P}}

\DeclareMathOperator{\supp}{supp}

\DeclareMathOperator{\Cov}{Cov}
\DeclareMathOperator{\Var}{Var}
\DeclareMathOperator{\conv}{conv}

\DeclareMathOperator{\rec}{rec}
\nc{\BP}{\mathbb{P}}
\nc{\BE}{\mathbb{E}}
\nc{\BQ}{\mathbb{Q}}
\nc{\eps}{\varepsilon}

\renewcommand{\emptyset}{\varnothing}
\newlength{\querylen}
\usepackage{fancybox}

\definecolor{olive}{rgb}{0.2, 0.6, .7}
\nc{\Green}[1]{{\color{olive} #1}}

\nc{\BT}{{\mathbb T}}
\nc{\cT}{{\mathcal T}}

\renewcommand{\subset}{\subseteq}

\begin{document}
\renewcommand{\thefootnote}{\fnsymbol{footnote}}
\author{G\"unter Last\footnotemark[1], Ilya Molchanov\footnotemark[2]}
\footnotetext[1]{guenter.last@kit.edu, 
Karlsruhe Institute of Technology, Institute for Stochastics, 76131 Karlsruhe, Germany. }
\footnotetext[2]{ilya.molchanov@unibe.ch, University of Bern,
Institute of Mathematical Statistics and Actuarial Science, CH-3012
 Bern, Switzerland}

\title{Efron type identities\\ for stopping sets and Poisson hulls}
\date{\today}
\maketitle

\begin{abstract}
  \noindent 
  We consider a Poisson process $\eta$ on a general space with
  intensity measure $\lambda$ and a stopping set $Z$ depending on
  $\eta$. Using in particular the spatial Markov property of $\eta$,
  we derive several distributional identities for the restrictions of
  $\eta$ and $\lambda$ to $Z$ and the complement of $Z$.  An important
  special case in Euclidean space is the convex hull of a finite
  Poisson process. In this case our results generalize classical (and
  also more recent) identities connecting the number of vertices and
  the volume of the convex hull. Our results apply to general Poisson
  hulls and predominantly even to more general random sets which are
  neither assumed to be bounded nor to be stopping sets.
\end{abstract}

\noindent
{\bf Keywords:} Poisson process, Poisson hull, convex hull,
stopping set, Efron type identities, spatial Markov property

\vspace{0.1cm}
\noindent
{\bf AMS MSC 2020:} 60D05, 60G55

\section{Introduction}

The convex hull $P$ of random points in $\R^d$ is a fundamental model
of stochastic geometry; see \cite[Section~8.2]{SW08}. Traditionally,
$P$ is either the convex hull of a finite number of independent and
identically distributed points, the binomial case, or the convex hull
of the points of a finite Poisson process, the Poisson case. Two
important geometric quantities are the number $N$ of vertices of $P$
and the value $\lambda(P)$, where $\lambda$ is either the probability
measure governing the distribution of the sample points in the
binomial case, or the intensity measure of the underlying Poisson
process in the Poisson case. Although the distributions of these
random variables can be expressed by multiple integrals involving
geometric functionals, such formulas are often difficult to use. In
the binomial case, Efron \cite{Efron65} discovered a simple identity
relating the expectations $\BE N$ and $\BE\lambda(P)$. This identity
was later extended to higher moments in \cite{Buchta05,Buchta23}.

Poisson versions of Efron's formula were derived in \cite{BeReitz15}.
Because of the independence properties of Poisson processes, the
Poisson case is often more tractable. For instance, the Poisson
version of Efron's identity takes the form
\begin{align}
\label{e:Efron}
\BE N=\BE\lambda(\R^d\setminus P),
\end{align}
and follows easily from the Mecke equation. If $\lambda$ is not
diffuse, then $N$ has to count possible multiplicities, and the set
appearing on the right-hand side should be interpreted as the
complement of the convex hull with its vertices removed.

In this paper we consider a Poisson process $\eta$ on a general Borel
space $(\BX,\cX)$ with intensity measure $\lambda$. Instead of working
only with the convex hull, we associate with $\eta$ a generator
$\partial\eta$ and a hull $[\eta]$. These objects were introduced and
studied in \cite{LM23}. In Euclidean space, this framework includes
the classical example in which $\partial\eta$ is the point process of
vertices of the convex hull and $[\eta]$ is the convex hull with its
vertices removed; in this case the general identities obtained below
recover \eqref{e:Efron}. If $\partial$ is a generator, then
Lemma~\ref{l:hullstopping} shows that the map
$\mu\mapsto[\mu]^c$, defined on the space of locally finite counting
measures, is a stopping set in the sense of \cite{LaPeYo23}. A partial
converse holds under a monotonicity assumption.

Section~\ref{sec:hull-generator} introduces stopping sets and explains
their connection with the spatial Markov property. In particular,
stopping sets give rise to Markov sets, that is, sets for which the
restriction of the Poisson process to the complement of the stopping
set is conditionally Poisson. Relations between generators, hulls, and
stopping sets are clarified in Appendix~A. A notable feature of our
formulation is that the Markov property is expressed after conditioning
not only on the restriction of the Poisson process to the stopping set,
but also on the corresponding restriction of the intensity measure. In
the important special case of convex hulls, Theorem~\ref{thr:lambda-on-Z}
shows that conditioning on the intensity measure can be omitted.

The Markov property of stopping sets is further explored in
Section~\ref{sec:identities}. In Section~\ref{sec:markov-sets} we use
it to derive several identities which directly generalize the Efron
identity for Poisson processes from \cite{BeReitz15}. Further
identities involving factorial moments and variances are obtained in
Section~\ref{s:furtherid}.

In many interesting examples, although not for ordinary convex hulls,
the hull operation satisfies the prime property. Informally, this means
that whether an added point affects the generator can be decided by
comparing it separately with each point of the underlying configuration.
This property leads to simplified Efron-type formulas, which are
derived in Section~\ref{sec:hulls-with-prime}.

Section~\ref{section:convexhulls} is devoted to convex hulls, including
the case where the convex hull of the points of a Poisson process is
unbounded. Finally, Section~\ref{section:ex} presents further examples
of stopping sets and the corresponding Efron-type identities.

\section{Stopping sets, hull and generator}
\label{sec:hull-generator}

Consider a Borel space $(\BX,\cX)$; see cite
\cite{kal17,LastPenrose17}.  We fix a \emph{localising ring}
$\cX_0\subset\cX$, see \cite{kal17}.  This is a ring with the
following two properties.  First, if $B\in\cX_0$ and $C\in\cX$, then
$B\cap C\in\cX_0$.  Second, there exists a sequence $B_n\in\cX_0$,
$n\in\N$, increasing to $\BX$ such that each set from $\cX_0$ is of
the form $C\cap B_n$ for some $C\in\cX$ and some $n\in\N$.
A measure $\nu$ is said to be \emph{locally finite}, if it is finite on
$\cX_0$.  The restriction of a measure $\nu$ to a set $B\in\cX$ is
defined by $\nu_B(\cdot):=\nu(\cdot\cap B)$. If $\nu$ is locally
finite, then so is $\nu_B$.  We denote by $\bN\equiv\bN(\BX)$ the
family of measures on $\BX$ which are integer-valued on $\cX_0$
and  equip $\bN$ with the smallest $\sigma$-field making the 
maps $\mu\mapsto \mu(B)$ measurable for each $B\in\cX$.
We consider a Poisson process $\eta$ on  $(\BX,\cX)$ with
a locally finite  intensity measure $\lambda$, defined
on an underlying probability space $(\Omega,\mathcal{A},\BP)$,
see \cite{LastPenrose17}.

A map $Z\colon\bN\to\cX$ is said to be \emph{graph measurable} if
$(x,\mu)\mapsto \I\{x\in Z(\mu)\}$ is a measurable map on
$\BX\times\bN$ with the product $\sigma$-algebra. A graph measurable
map $Z$ is said to be a \emph{stopping set} if
\begin{equation}
  \label{eq:st-set-Z}
  Z(\mu)=Z(\mu_{Z(\mu)}+\psi_{Z(\mu)^c}),\quad \mu,\psi\in\bN, 
\end{equation}
see \cite[Appendix~A]{LaPeYo23}. In particular, $Z$ is
\emph{self-replicating}, that is, $Z(\mu)=Z(\mu_{Z(\mu)})$ for all
$\mu\in\bN$. The complement to $Z(\mu)$ is denoted by $[\mu]$ and is
said to be the \emph{hull operator}.

Many examples of stopping sets arise from a \emph{generator} which is
a map $\partial\colon\bN\to\bN$, as introduced in \cite{LM23}.  The
defining properties are listed in Appendix~A.  If $\partial$ is a
generator, then Lemma~\ref{l:hullstopping} shows that
$\mu\mapsto[\mu]^c$ is a stopping set and the associated hull operator
is given by
\begin{equation}
  \label{eq:hull-partial}
  [\mu]:=\{x\in\BX:\partial(\mu+\delta_x)=\partial\mu\},\quad \mu\in\bN.
\end{equation}
We refer to Appendix~A for general relations between stopping sets and
generators.

A key example (and a possible guiding star) in the case $\BX=\R^d$ is
based on the convex hull of the support of finite $\mu\in\bN$.
In this case the intensity measure $\lambda$ is finite so that one can
assume $\mu(\BX)<\infty$.  Then $\partial\mu$ is the restriction of
$\mu$ to the extreme points (vertices) of this convex hull and $[\mu]$
is the convex hull without the vertices. Convex
hulls of not necessarily finite measures are considered in
Section~\ref{section:convexhulls}.  

The following \emph{spatial Markov property} stems from
\cite{LaPeYo23}, see also Theorem~3.2 from \cite{LM23}.

\begin{theorem}\label{t:spatMarkov}
  Let $\eta$ be a Poisson process with locally finite intensity
  measure $\lambda$ on a localized Borel space $(\BX,\cX)$.  Assume
  that $Z$ is a stopping set.  Then on the event
  $\{\eta(Z(\eta))<\infty\}$ the conditional distribution of $\eta$
  restricted to $Z(\eta)^c$ given $\eta_{Z(\eta)}$ is (almost surely)
  the distribution of a Poisson process with intensity measure
  $\lambda$ restricted to $Z(\eta)^c$. 
\end{theorem}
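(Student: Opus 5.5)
To prove Theorem~\ref{t:spatMarkov}, I would show that for all measurable $f\colon\bN\to[0,\infty)$ and $g\colon\bN\to[0,\infty)$,
\begin{equation*}
\BE\bigl[\I\{\eta(Z(\eta))<\infty\}\, f(\eta_{Z(\eta)})\, g(\eta_{Z(\eta)^c})\bigr]
=\BE\Bigl[\I\{\eta(Z(\eta))<\infty\}\, f(\eta_{Z(\eta)}) \int g\, d\Pi_{\lambda_{Z(\eta)^c}}\Bigr],
\end{equation*}
where $\Pi_\nu$ denotes the distribution of a Poisson process with intensity measure $\nu$. This identity is exactly the claimed conditional statement. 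The right-hand side is a measurable functional of $\eta_{Z(\eta)}$, because by self-replication $Z(\eta)=Z(\eta_{Z(\eta)})$.

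The key device is to decompose according to the finite configuration $\eta_{Z(\eta)}$. On the event $\{\eta(Z(\eta))=n\}$, write $\eta_{Z(\eta)}=\sum_{i=1}^n\delta_{x_i}$ and sum over $n$. For a finite $\mu=\sum_{i\le n}\delta_{x_i}$, the stopping property \eqref{eq:st-set-Z} gives the following characterization. We have $\eta_{Z(\eta)}=\mu$ if and only if $Z(\mu)\ni x_1,\dots,x_n$ and $\eta(Z(\mu)\setminus\{x_1,\dots,x_n\})=0$, that is, $\eta_{Z(\mu)}=\mu$.

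I will check both directions. If $\eta_{Z(\eta)}=\mu$, then self-replication gives $Z(\eta)=Z(\mu)$. Conversely, suppose $\eta_{Z(\mu)}=\mu$ and $x_i\in Z(\mu)$ for all $i$. Applying \eqref{eq:st-set-Z} to $\mu$ with $\psi=\eta$ gives $Z(\mu)=Z(\mu_{Z(\mu)}+\eta_{Z(\mu)^c})=Z(\eta_{Z(\mu)}+\eta_{Z(\mu)^c})=Z(\eta)$.

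With this characterization, the left-hand side becomes a sum over $n$ of $\frac{1}{n!}$ times the expectation of a sum over $n$-tuples of distinct points of $\eta$. To this I apply the multivariate Mecke equation. The integrand evaluated at $\eta+\delta_{x_1}+\dots+\delta_{x_n}$ contains the indicator that $\eta$ has no points in $Z(\mu)$, where $\mu=\sum\delta_{x_i}$. It also contains the factor $g(\eta_{Z(\mu)^c})$, with the added points lying in $Z(\mu)$. Using the independence of $\eta_{Z(\mu)}$ and $\eta_{Z(\mu)^c}$ for the fixed set $Z(\mu)$, the inner expectation factorizes as
\begin{equation*}
\BP\bigl(\eta(Z(\mu))=0\bigr)\,\BE\, g(\eta_{Z(\mu)^c}) = e^{-\lambda(Z(\mu))}\int g\,d\Pi_{\lambda_{Z(\mu)^c}}.
\end{equation*}
Graph measurability of $Z$ makes all of this jointly measurable in $(x_1,\dots,x_n)$.

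The same computation with $g$ replaced by the constant $\int g\,d\Pi_{\lambda_{Z(\mu)^c}}$, which is a function of $\mu$ only, yields the identical expression. Running the Mecke equation backwards then produces the right-hand side, which proves the identity.

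I expect the main obstacle to be the set-theoretic characterization of the event $\{\eta_{Z(\eta)}=\mu\}$ through the stopping property, together with careful handling of multiplicities when $\lambda$ has atoms. The Mecke equation handles multiplicities correctly via factorial measures. A second point needing care is that $\lambda(Z(\mu))$ may be infinite. In that case the factor $e^{-\lambda(Z(\mu))}$ vanishes, which is consistent on both sides, so no issue arises.
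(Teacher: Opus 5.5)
Your proof is correct and is essentially the standard argument for this result; the paper gives no proof of its own but takes it from \cite{LaPeYo23} (see also \cite[Theorem~3.2]{LM23}), and that argument matches yours: decompose over $\{\eta(Z(\eta))=n\}$, apply the multivariate Mecke equation, use the stopping property to rewrite the event as $\{x_1,\dots,x_n\in Z(\delta_{\bx}),\ \eta(Z(\delta_{\bx}))=0\}$, and factorize by independence of $\eta_{Z(\delta_{\bx})}$ and $\eta_{Z(\delta_{\bx})^c}$; the paper itself invokes this representation as \cite[Proposition~A.5]{LaPeYo23} in the proof of Theorem~\ref{t1}. One cosmetic slip: $\eta(Z(\mu)\setminus\{x_1,\dots,x_n\})=0$ is not equivalent to $\eta_{Z(\mu)}=\mu$ when multiplicities occur, but the condition you actually verify, and use after the Mecke shift (namely $\eta(Z(\mu))=0$ for the unshifted process), is the correct one.
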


To express Theorem~\ref{t:spatMarkov} in mathematical symbols, we let
$\Pi_\nu$ denote the distribution of a Poisson process with intensity
measure $\nu$.  A graph-measurable $Z\colon\bN\to\cX$ is said to be
\emph{$\eta$-finite} if $\BP(\eta(Z(\eta))<\infty)=1$.  If $Z$ is a
$\eta$-finite stopping set, then
\begin{equation} \label{SMP}
  \BP\big(\eta_{Z(\eta)^c} \in\cdot\mid \eta_{Z(\eta)}\big)
  =\Pi_{\lambda_{Z(\eta)^c}}(\cdot),\quad \BP\text{-a.s.}
\end{equation}
In particular, $\eta_{Z(\eta)^c}$ is a \emph{Cox process} directed by
the random measure $\lambda_{Z(\eta)^c}$, see
\cite{kal17,LastPenrose17}.

Given a graph-measurable $Z\colon\bN\to\cX$ we define (by a slight
abuse of notation) two point processes $\eta_Z:=\eta_{Z(\eta)}$ and
$\eta_{Z^c}:=\eta_{Z(\eta)^c}$.  Similarly we define
measures $\lambda_Z:=\lambda_{Z(\eta)}$ and
$\lambda_{Z^c}:=\lambda_{Z(\eta)^c}$.  
Then $\lambda_Z$ and $\lambda_{Z^c}$ are random measures,
that is, random elements of the space $\bM$ of all locally finite
measures on $\BX$, equipped with the standard $\sigma$-field.
Note that
$\eta_Z+\eta_{Z^c}=\eta$, while $\lambda_Z+\lambda_{Z^c}=\lambda$ is
deterministic.
We say that a graph-measurable $Z\colon\bN\to\cX$ is a \emph{Markov set}
(with respect to $\eta$) if
\begin{equation}\label{eMarkov2}
  \BP(\eta_{Z(\eta)^c}\in \cdot\mid \eta_{Z(\eta)},\lambda_{Z(\eta)})
  =\Pi_{\lambda_{Z(\eta)^c}}(\cdot), \quad \BP\text{-a.s.}
\end{equation}
The following result generalizes Theorem~\ref{t:spatMarkov}.  

\begin{theorem}\label{t:spatMarkov2}
  Let $\eta$ be as in Theorem~\ref{t:spatMarkov}. Let $(Z_n)$ be a 
  sequence of $\eta$-finite stopping sets. 
  Assume that $Z\colon \bN\to\cX$ satisfies
  \begin{align}
    \label{approxstopping}
    \lim_{n\to\infty}\I\{x\in Z_n(\mu)\}=\I\{x\in Z(\mu)\},
    \quad (x,\mu)\in\BX\times\bN.
  \end{align}
  Then $Z$ is a Markov set with respect to $\eta$.
\end{theorem}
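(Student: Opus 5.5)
The plan is to reduce the Markov property \eqref{eMarkov2} to an identity of joint Laplace functionals. That identity holds for each $Z_n$ by Theorem~\ref{t:spatMarkov}, and it passes to the limit $n\to\infty$ by dominated convergence.

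First, $Z$ is graph measurable, since $(x,\mu)\mapsto\I\{x\in Z(\mu)\}$ is a pointwise limit of measurable maps by \eqref{approxstopping}. Let $\mathcal F$ be the class of measurable functions $u\colon\BX\to[0,\infty)$ that vanish outside some set in $\cX_0$ and are bounded. I will show that for all $u,v,w\in\mathcal F$
\begin{align}\label{e:LaplaceMarkov}
\BE\exp\big[-\eta_Z(u)-\lambda_Z(v)-\eta_{Z^c}(w)\big]
=\BE\exp\big[-\eta_Z(u)-\lambda_Z(v)-\lambda_{Z^c}(1-e^{-w})\big].
\end{align}
Here $\nu(u):=\int u\,d\nu$. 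Consider the triple $(\eta_Z,\lambda_Z,\xi)$, where, conditionally on $(\eta_Z,\lambda_Z)$, $\xi$ is Poisson with intensity $\lambda_{Z^c}=\lambda-\lambda_Z$. This is a measurable functional of $\lambda_Z$, so the triple is well defined. The right-hand side of \eqref{e:LaplaceMarkov} is the joint Laplace functional of this triple. The left-hand side is the joint Laplace functional of $(\eta_Z,\lambda_Z,\eta_{Z^c})$.

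All these random measures are locally finite, because $\lambda_Z\le\lambda$ and $\eta_Z,\eta_{Z^c}\le\eta$. Hence joint Laplace functionals on $\mathcal F$ determine the joint law on $\bN\times\bM\times\bN$. So \eqref{e:LaplaceMarkov} gives $(\eta_Z,\lambda_Z,\eta_{Z^c})\overset{d}{=}(\eta_Z,\lambda_Z,\xi)$. This is exactly \eqref{eMarkov2}, because the conditional law of $\xi$ given $(\eta_Z,\lambda_Z)$ is $\Pi_{\lambda_{Z^c}}$ by construction.

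Second, \eqref{e:LaplaceMarkov} holds with $Z_n$ in place of $Z$. Since $Z_n$ is a stopping set, it is self-replicating, so $Z_n(\eta)=Z_n(\eta_{Z_n})$. Hence $\lambda_{Z_n}$, and also $\lambda_{Z_n^c}$, are measurable functions of $\eta_{Z_n}$. By \eqref{SMP}, which applies because $Z_n$ is $\eta$-finite, conditioning on $\eta_{Z_n}$ and computing the Poisson Laplace functional gives
\[
\BE\big[e^{-\eta_{Z_n^c}(w)}\mid\eta_{Z_n}\big]=\exp[-\lambda_{Z_n^c}(1-e^{-w})].
\]
Multiplying by the $\sigma(\eta_{Z_n})$-measurable factor $e^{-\eta_{Z_n}(u)-\lambda_{Z_n}(v)}$ and taking expectations yields \eqref{e:LaplaceMarkov} for $Z_n$.

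Third, let $n\to\infty$. Fix $u,v,w\in\mathcal F$, all vanishing outside some $B\in\cX_0$. Then $\eta(B)<\infty$ and $\lambda(B)<\infty$, and $\eta_{Z_n}(u)=\int\I\{x\in Z_n(\eta)\}u(x)\,\eta(dx)$. By \eqref{approxstopping} and dominated convergence, with dominating function $u\I_B$, which is integrable against both $\eta$ and $\lambda$, we get
\[
\eta_{Z_n}(u)\to\eta_Z(u),\quad \lambda_{Z_n}(v)\to\lambda_Z(v),\quad \eta_{Z_n^c}(w)\to\eta_{Z^c}(w),\quad \lambda_{Z_n^c}(1-e^{-w})\to\lambda_{Z^c}(1-e^{-w})
\]
pointwise on $\Omega$. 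Both integrands in \eqref{e:LaplaceMarkov} take values in $[0,1]$, so bounded convergence transfers the identity from $Z_n$ to $Z$. Note that neither the stopping property nor $\eta$-finiteness is needed for $Z$ itself.

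The step I expect to need the most care is the first one: justifying that \eqref{e:LaplaceMarkov} on $\mathcal F$ characterizes the joint law, and hence the conditional distribution. The concern is that $\lambda_Z$ is a random measure dominated by $\lambda$ rather than a point process. I would handle this with the standard uniqueness theorem for Laplace functionals of random elements of $\bM$, applied to the $\bM^3$-valued triple; for example, it can be viewed as a random measure on three disjoint copies of $\BX$ with localising ring generated by $\cX_0$.
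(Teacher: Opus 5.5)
Your proposal is correct and follows essentially the same route as the paper. Both establish the conditional-Poisson identity for each $Z_n$ via Theorem~\ref{t:spatMarkov} and self-replication, pass to the limit using \eqref{approxstopping} and bounded convergence on sets from $\cX_0$, and then conclude by a uniqueness argument. The only difference is the determining class: you use joint Laplace functionals of test functions supported in $\cX_0$, whereas the paper uses continuous bounded functions of finitely many evaluations $\mu(C_i\cap B_k)$, which requires a second limit $k\to\infty$ and then a monotone class argument.
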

\begin{proof}
  Let $Z_n$, $n\in\N$, be $\eta$-finite stopping sets satisfying
  \eqref{approxstopping}.  We need to show that
  \begin{align}\label{e:generalMP}
    \BE[f(\eta_Z,\lambda_Z,\eta_{Z^c})]
    =\BE\bigg[ \int f(\eta_Z,\lambda_Z,\mu_{Z(\eta)^c})\,\Pi_\lambda(d\mu)\bigg].
  \end{align}
  holds for all measurable and bounded
  $f\colon\bN\times\bM\times\bN\to\R$.  Let $m\in\N$,
  $g\colon[0,\infty]^{3m}\to \R$ be continuous and bounded and let
  $C_1,\ldots,C_{3m}\in\cX$.  By Theorem \ref{t:spatMarkov} and the
  self-replicating property of $Z_n$ we can apply \eqref{e:generalMP}
  with $Z_n$ in place of $Z$ to obtain
  \begin{align}\label{eA23}
    \BE[f(\eta_{Z_n},\lambda_{Z_n},\eta_{Z_n^c})]
    =\BE\bigg[ \int f(\eta_{Z_n},\lambda_{Z_n},\mu_{Z_n(\eta)^c})\,
    \Pi_\lambda(d\mu)\bigg],
  \end{align}
  where $f(\mu,\nu,\psi)$ is obtained by inserting into $g$ the arguments
  $\mu(C_1\cap B_k),\ldots,\mu(C_m\cap B_k)$,
  $\nu(C_{m+1}\cap B_k),\ldots,\nu(C_{2m}\cap B_k)$
  and $\psi(C_{2m+1}\cap B_k),\ldots,\psi(C_{3m}\cap B_k)$.
  Since $\eta(B_k)<\infty$ and $\lambda(B_k)<\infty$ we can use
  \eqref{approxstopping} and continuity of $g$ to obtain
  $f(\eta_{Z_n},\lambda_{Z_n},\eta_{Z_n^c})\to
  f(\eta_Z,\lambda_Z,\eta_{Z^c})$ as $n\to\infty$.  By bounded
  convergence the left-hand side of \eqref{eA23} tends to the
  left-hand side of \eqref{e:generalMP}.  For the right-hand sides we
  note that bounded convergence implies for each $\psi\in\bN$ that
  \begin{align*}
    \lim_{n\to\infty}\int f(\psi_{Z_n(\psi)},\lambda_{Z_n(\psi)},
    \mu_{\BX\setminus Z_n(\psi)})\,\Pi_\lambda(d\mu)
    =\int f(\psi_{Z(\psi)},\lambda_{Z(\psi)},
    \mu_{\BX\setminus Z(\psi)})\,\Pi_\lambda(d\mu).
  \end{align*}
  Again by bounded convergence the right-hand side of 
  \eqref{eA23} tends to the right-hand side of  \eqref{e:generalMP}.

  Next we let $k\to\infty$. By continuity of $g$ and bounded
  convergence we obtain \eqref{e:generalMP} for the function
  \begin{displaymath}
    f(\mu,\nu,\psi)=g(\mu(C_1),\ldots,\mu(C_m),\nu(C_{m+1}),
    \ldots,\mu(C_{2m}),
    \psi(C_{2m+1}),\ldots,\psi(C_{3m})).
  \end{displaymath}
  Since the distribution of a random vector in $[0,\infty]^{3m}$ is
  determined by expectations of continuous functions, this remains
  true for all bounded and measurable $g\colon[0,\infty]^{3m}\to \R$.
  Similarly as in the proof of \cite[Proposition 2.19]{LastPenrose17}
  we can use the monotone class theorem to obtain \eqref{e:generalMP}
  for all measurable indicator functions and hence also for general
  bounded (or non-negative) measurable functions. This concludes the
  proof.  
\end{proof}

If the convergence \eqref{approxstopping} is monotone increasing, then
$Z$ is a stopping set; see \cite[Theorem~A.6]{LaPeYo23}. In this case
$\lambda_Z$ is a measurable function of $\eta_Z$ and \eqref{eMarkov2}
simplifies to \eqref{SMP}. This special case of
Theorem~\ref{t:spatMarkov2} was proved in \cite{LaPeYo23}.  If the
convergence \eqref{approxstopping} is not monotone increasing, then
\eqref{approxstopping} does not imply that $Z$ is a stopping set; see
Example~\ref{ex:chull-problem}.  It might already fail to be
self-replicating. 

We say that the convergence \eqref{approxstopping} is \emph{measurably
  closed}, if whenever $X_n$, $n\in\N$, are
$\sigma(\eta_{Z_n})$-measurable random variables converging pointwise
to some random variable $X$, then $X$ is almost surely
$\sigma(\eta_Z)$-measurable.

\begin{theorem}\label{t:spatMarkov3}
  Let the assumptions of Theorem~\ref{t:spatMarkov2} be satisfied and
  assume in addition that the convergence \eqref{approxstopping} is
  measurably closed.  Then the spatial Markov property holds in the
  form \eqref{SMP}.
\end{theorem}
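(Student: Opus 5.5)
The plan is to deduce \eqref{SMP} from the Markov property \eqref{eMarkov2} provided by Theorem~\ref{t:spatMarkov2}, using measurable closedness to show that conditioning on $\lambda_Z$ adds no information beyond $\eta_Z$. By Theorem~\ref{t:spatMarkov2} we have, for all bounded measurable $f$,
\begin{align*}
\BE[f(\eta_Z,\lambda_Z,\eta_{Z^c})]
=\BE\bigg[\int f(\eta_Z,\lambda_Z,\mu_{Z(\eta)^c})\,\Pi_\lambda(d\mu)\bigg].
\end{align*}
To obtain \eqref{SMP} it suffices to show that $\lambda_Z$ (equivalently $\lambda_{Z^c}=\lambda-\lambda_Z$) is almost surely $\sigma(\eta_Z)$-measurable, i.e.\ that it coincides a.s.\ with a measurable function of $\eta_Z$. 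Once this holds, $\sigma(\eta_Z,\lambda_Z)=\sigma(\eta_Z)$ up to null sets. The kernel $\Pi_{\lambda_{Z(\eta)^c}}$ is then a $\sigma(\eta_Z)$-measurable version of the conditional distribution in \eqref{eMarkov2}. The tower property applied to test functions $f(\eta_Z,\eta_{Z^c})$ gives \eqref{SMP}.

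The measurability is proved via the approximating stopping sets. For each $n$, $Z_n$ is a stopping set, so $Z_n(\eta)=Z_n(\eta_{Z_n(\eta)})$ by the self-replicating property. Hence for every $C\in\cX_0$ the random variable $X_n:=\lambda(Z_n(\eta)\cap C)$ is a measurable function of $\eta_{Z_n}$. The composition is measurable by graph measurability of $Z_n$ and Fubini, since $\mu\mapsto\int \I\{x\in Z_n(\mu)\}\I\{x\in C\}\,\lambda(dx)$ is measurable. By \eqref{approxstopping} and dominated convergence (using $\lambda(C)<\infty$), $X_n\to X:=\lambda(Z(\eta)\cap C)$ pointwise. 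Measurable closedness then yields that $\lambda(Z(\eta)\cap C)$ is a.s.\ $\sigma(\eta_Z)$-measurable for each fixed $C\in\cX_0$.

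The step needing most care is passing from countably many, or individually fixed, sets $C$ to the whole random measure $\lambda_Z$, since the null sets depend on $C$. I would take a countable ring $\mathcal{R}\subset\cX_0$ generating $\cX$ and containing the $B_k$; it exists because $(\BX,\cX)$ is Borel. The exceptional null sets over $\mathcal{R}$ form a countable union, so off a single null set all values $\lambda_Z(C)$, $C\in\mathcal{R}$, agree with $\sigma(\eta_Z)$-measurable variables. A locally finite measure is determined by its values on such a ring, and the $\sigma$-field on $\bM$ is generated by these evaluations. This gives a $\sigma(\eta_Z)$-measurable random measure equal to $\lambda_Z$ almost surely. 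Completing the conditional expectation argument above then finishes the proof.
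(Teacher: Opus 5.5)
Your proposal is correct and is essentially the paper's proof: you write $\lambda_Z(C)$ as the pointwise limit of the $\sigma(\eta_{Z_n})$-measurable variables $\lambda(Z_n(\eta)\cap C)$, using self-replication of $Z_n$. You then invoke measurable closedness and pass to the whole random measure $\lambda_Z$, so that the conditioning on $\lambda_Z$ in \eqref{eMarkov2} can be dropped.

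Your countable generating ring in $\cX_0$ spells out the monotone class step that the paper only mentions. The one point to state precisely there is that you obtain measurability of $\lambda_Z$ with respect to the $\BP$-completion of $\sigma(\eta_Z)$. This is all the final conditioning argument needs.
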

\begin{proof}
  We need to show that the conditioning with respect to $\lambda_Z$ in
  \eqref{eMarkov2} can be dropped. Let $B\in\cX_b$. Then
  $\lambda_Z(B)=\lim_{n\to\infty}\lambda(Z_n(\eta)\cap B)$.  Since the
  $Z_n$ are self-replicating this is a limit of
  $\sigma(\eta_{Z_n})$-measurable random variables. Therefore we
  obtain from the assumption that $\lambda_Z(B)$ is a.s.\
  $\sigma(\eta_Z)$-measurable. A monotone class argument shows that
  this remains true for the whole random measure $\lambda_Z$.
\end{proof}

\begin{corollary}
  Let $Z$ be the monotone decreasing limit of bounded stopping sets
  $Z_n$. Then the spatial Markov property holds in the form
  \eqref{SMP}.
\end{corollary}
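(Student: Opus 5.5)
The plan is to deduce the corollary from Theorem~\ref{t:spatMarkov3}, so I need to check its three hypotheses for the decreasing sequence. First, a bounded stopping set $Z_n$ (that is, $Z_n(\mu)\subset B_{k_n}$ for some fixed $k_n$, or more generally $Z_n(\mu)\in\cX_0$) satisfies $\eta(Z_n(\eta))<\infty$ almost surely. This holds because $\eta$ is locally finite. Hence every $Z_n$ is $\eta$-finite.

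Second, the monotone decreasing convergence $Z_n(\mu)\downarrow Z(\mu)$ gives \eqref{approxstopping} pointwise. Theorem~\ref{t:spatMarkov2} then shows that $Z$ is a Markov set.

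The remaining and main step is to verify that the convergence is measurably closed. Here is the argument I would try.
\begin{itemize}
\item Fix $n$ and any $m\ge n$. Since $Z_m(\eta)\subset Z_n(\eta)$, we have $\eta_{Z_m}=(\eta_{Z_n})_{Z_m(\eta)}$.
\item By the stopping property \eqref{eq:st-set-Z} (self-replication), $Z_m(\eta)=Z_m(\eta_{Z_m})$. Graph measurability then makes $\eta_{Z_m}$ a measurable function of $\eta_{Z_n}$, so $\sigma(\eta_{Z_m})\subset\sigma(\eta_{Z_n})$.
\item The $\sigma$-fields $\mathcal F_n:=\sigma(\eta_{Z_n})$ are therefore decreasing in $n$.
\item If $X_n$ is $\mathcal F_n$-measurable and $X_n\to X$, then for each $n$ the limit $X=\lim_{m\ge n}X_m$ is $\mathcal F_n$-measurable. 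Hence $X$ is measurable with respect to $\mathcal F_\infty:=\bigcap_n\mathcal F_n$.
\end{itemize}

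It remains to show $\mathcal F_\infty\subset\sigma(\eta_Z)$ up to null sets. Boundedness is needed here. Since $\eta(Z_1(\eta))<\infty$ almost surely, the point set $\eta_{Z_n}$ has finitely many atoms, all in $Z_1(\eta)$, and it is nonincreasing in $n$. Therefore it is eventually constant: almost surely there is a random $N$ with $\eta_{Z_n}=\eta_Z$ for $n\ge N$, because a point lies in $Z$ if and only if it lies in every $Z_n$.

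For $A\in\mathcal F_\infty$, write $\I_A=h_n(\eta_{Z_n})$ for every $n$. Then almost surely
\[
\I_A=\lim_n h_n(\eta_{Z_n})=\lim_n h_n(\eta_Z)=\limsup_n h_n(\eta_Z),
\]
where the middle equality holds because $\eta_{Z_n}=\eta_Z$ for $n\ge N$. The right-hand side is a measurable function of $\eta_Z$, so $\I_A$ is almost surely $\sigma(\eta_Z)$-measurable. This is the delicate point, and the eventual-constancy argument handles it: it uses only that $\eta_{Z_n}\to\eta_Z$ in the discrete sense.

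Applying Theorem~\ref{t:spatMarkov3} now yields \eqref{SMP}.
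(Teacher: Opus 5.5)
Your proposal is correct and follows the paper's route: you reduce to Theorem~\ref{t:spatMarkov3} and obtain measurable closedness from the almost sure eventual equality $\eta_{Z_n}=\eta_Z$ for $n\ge N$, and you also make explicit the $\eta$-finiteness of the $Z_n$ and the appeal to Theorem~\ref{t:spatMarkov2}, which the paper leaves implicit. Your detour through the decreasing $\sigma$-fields $\sigma(\eta_{Z_n})$ is unnecessary, since the paper applies the eventual-constancy argument directly to $X_n=g_n(\eta_{Z_n})$ to get $X=\liminf_n g_n(\eta_Z)$ a.s.; moreover, justifying $\sigma(\eta_{Z_m})\subset\sigma(\eta_{Z_n})$ needs the full property \eqref{eq:st-set-Z} with $\psi=\eta_{Z_n}$, which gives $Z_m(\eta)=Z_m(\eta_{Z_n})$, and not merely self-replication.
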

\begin{proof}
  We show that the convergence \eqref{approxstopping} is measurably
  closed.  Since the $Z_n$ are bounded it follows from
  \eqref{approxstopping} that there exists a random integer $N$ such
  \begin{displaymath}
    \eta_{Z_n}=\eta_{Z}, \quad n\ge N,
  \end{displaymath}
  holds almost surely. Hence the events $A_n:=\{\eta_{Z_n}=\eta_Z\}$,  
  satisfy  $\lim_{n\to \infty}\I_{A_n}=1$ almost surely.
  We can write $X_n:=g_n(\eta_{Z_n})$ for suitable bounded measurable
  functions $g_n$. Define $g:=\liminf_{n\to\infty}g_n$. Then
  \begin{displaymath}
    X=\lim_{n\to\infty}\I_{A_n}X_n
    =\lim_{n\to\infty}\I_{A_n}g_n(\eta_Z)=g(\eta_Z).
  \end{displaymath}
  This concludes the proof.
\end{proof}

\section{Stopping sets}
\label{sec:identities}

In this section we fix a stopping set $Z$ and study the
distribution of $(\eta_Z,\lambda_{Z^c},\eta_{Z^c})$ on the event
$\{\eta(Z(\eta))<\infty\}$.

Given $x_1,\dots,x_n\in\BX$, we write $\bx:=(x_1,\dots,x_n)$,
$\delta_{\bx}:=\delta_{x_1}+\cdots+\delta_{x_n}$, and
$[\bx]:=Z(\delta_{\bx})^c$.  Further, let $C_n$ be the set of all
$\bx\in\BX^n$ such that $x_1,\ldots,x_n\in Z(\delta_{\bx})$. For
instance, in the convex hull setting, $C_n$ is the collection of
$n$-tuples of points which are vertices of their convex hull, with
multiplicities retained.

Given $B\in\cX$ the \Green{(joint)} distribution of
$\big(\eta_Z,\lambda_{Z^c}(B),\eta_{Z^c}(B)\big)$ on the event
$\{\eta_Z(\BX)<\infty\}$ can be described by a measure $\lambda^*_B$
on $\bN\times[0,\infty]$ which is defined as
\begin{displaymath}
  \lambda^*_B:=e^{-\lambda(Z(0))}\delta_{(0,\lambda(B\cap Z(0)^c))}
  +\sum^\infty_{n=1}\frac{1}{n!}
  \int_{C_n}\I\big\{\big(\delta_{\bx},\lambda(B\cap [\bx])\big)
  \in\cdot\big\}e^{-\lambda([\bx]^c)}\, \lambda^n(d\bx).
\end{displaymath}
For $t=\infty$ we set $e^{-t} t^m/m!:=0$ for $m\in\N_0$ and
$e^{-t}t^m/m!:=1$ for $m=\infty$.
Define $\bar{\N}_0:=\N_0\cup\{\infty\}$.

\begin{theorem}\label{t1}
  Let $f\colon \bN\times[0,\infty]\times\bar{\N}_0\to\R_+$ be a
  measurable function and $B\in\cX$.  Then
  \begin{align}
    \label{e:2.4a}
    \BE \big[\I\{\eta_Z(\BX)<\infty\}
    f\big(\eta_Z,\lambda_{Z^c}(B),\eta_{Z^c}(B)\big)\big]
    =\sum_{m\in\bar\N_0}\int \frac{t^m}{m!} e^{-t}f(\mu,t,m)
    \,\lambda^*_B(d(\mu,t)).
  \end{align} 
\end{theorem}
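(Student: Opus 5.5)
The proof has two steps. First, use the spatial Markov property (Theorem~\ref{t:spatMarkov}) to reduce the left-hand side of \eqref{e:2.4a} to a statement about $\eta_Z$ alone. Second, identify the distribution of $\eta_Z$ on $\{\eta_Z(\BX)<\infty\}$ by splitting $\eta$ according to its total mass in $Z(\eta)$ and applying the stopping property with the multivariate Mecke equation.

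\emph{Reduction.} On $\{\eta_Z(\BX)<\infty\}$, \eqref{SMP} says that, conditionally on $\eta_Z$, $\eta_{Z^c}$ is Poisson with intensity $\lambda_{Z(\eta)^c}$. Since $Z$ is self-replicating, $\lambda_{Z^c}=\lambda_{Z(\eta_Z)^c}$ is a function of $\eta_Z$. Hence, conditionally on $\eta_Z$, the variable $\eta_{Z^c}(B)$ is Poisson with parameter $t:=\lambda(B\cap Z(\eta)^c)$, interpreted via the convention for $t=\infty$. We get
\begin{displaymath}
  \mathrm{LHS}=\BE\Big[\I\{\eta_Z(\BX)<\infty\}\sum_{m\in\bar\N_0}\frac{t^m}{m!}e^{-t}f(\eta_Z,t,m)\Big],\qquad t=\lambda_{Z^c}(B).
\end{displaymath}
It therefore suffices to prove that for measurable $h\ge0$ on $\bN\times[0,\infty]$,
\begin{displaymath}
  \BE\big[\I\{\eta_Z(\BX)<\infty\}\,h(\eta_Z,\lambda_{Z^c}(B))\big]=\int h\,d\lambda^*_B .
\end{displaymath}
In the case $t=\infty$, the conditional law is the point mass at $m=\infty$; this matches the stated convention and only needs a remark.

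\emph{Identifying the law of $\eta_Z$.} Decompose according to $n=\eta(Z(\eta))\in\N_0$. For $n=0$, the stopping property \eqref{eq:st-set-Z} with $\mu=\eta$ and $\psi=0$ gives $\eta(Z(\eta))=0$ if and only if $Z(\eta)=Z(0)$ and $\eta(Z(0))=0$. Indeed, if $\eta(Z(0))=0$, then applying \eqref{eq:st-set-Z} with $\mu=0$ and $\psi=\eta$ yields $Z(\eta)=Z(0)$. This event has probability $e^{-\lambda(Z(0))}$ and produces the first term of $\lambda^*_B$.

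For $n\ge1$, write
\begin{displaymath}
  \I\{\eta(Z(\eta))=n\}\,h(\eta_Z,\cdot)=\frac1{n!}\int \I\{\eta_{Z(\eta)}=\delta_{\bx}\}\,h(\delta_\bx,\cdot)\,\eta^{(n)}(d\bx).
\end{displaymath}
The key claim is that, for $\bx$ with all points in $\eta$, the event $\eta_{Z(\eta)}=\delta_\bx$ is equivalent to $\bx\in C_n$ together with $(\eta-\delta_\bx)(Z(\delta_\bx))=0$, and that in this case $Z(\eta)=Z(\delta_\bx)$. Both directions follow from \eqref{eq:st-set-Z}. If $\eta_{Z(\eta)}=\delta_\bx$, then self-replication gives $Z(\eta)=Z(\delta_\bx)$, so $\bx\in C_n$ and no other points of $\eta$ lie in $Z(\delta_\bx)$. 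Conversely, apply \eqref{eq:st-set-Z} to $\mu=\delta_\bx$ and $\psi=\eta$ to get $Z(\eta)=Z(\delta_\bx)$.

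Applying the multivariate Mecke equation then gives
\begin{displaymath}
  \frac1{n!}\int_{C_n}\BP\big(\eta(Z(\delta_\bx))=0\big)\,h\big(\delta_\bx,\lambda(B\cap[\bx])\big)\,\lambda^n(d\bx),
\end{displaymath}
and $\BP(\eta(Z(\delta_\bx))=0)=e^{-\lambda([\bx]^c)}$. Summing over $n$ yields $\int h\,d\lambda^*_B$.

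The main obstacle is the careful verification of this equivalence when points carry multiplicities, and the measurability of $C_n$ and of the integrands, which relies on the graph measurability of $Z$. I would handle this by working with $\eta^{(n)}$, the factorial measure, so that multiplicities are accounted for correctly.
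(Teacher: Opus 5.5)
Your proof is correct and follows the paper's route: you use the spatial Markov property and self-replication of $Z$ to integrate out $\eta_{Z^c}(B)$ as a Poisson variable with parameter $\lambda_{Z^c}(B)$, and then decompose according to $\eta(Z(\eta))=n$. The only difference is that the paper takes the resulting formula $\frac{1}{n!}\int_{C_n}(\cdots)\,e^{-\lambda([\bx]^c)}\,\lambda^n(d\bx)$ from \cite[Proposition~A.5]{LaPeYo23}, whereas you derive it yourself from the multivariate Mecke equation and the stopping property; this makes your argument self-contained, and your handling of multiplicities via $\eta^{(n)}$ and of the case $n=0$ is sound.
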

\begin{proof}
  Let $n\in\N$. By Theorem \ref{t:spatMarkov},
  \begin{align*}
    I_n&:=\BE\big[
         \I\{\eta_Z(\BX)=n\}
         f\big(\eta_Z,\lambda_{Z^c}(B),\eta_{Z^c}(B)\big)\big]\\
    &=\BE \Big[\I\{\eta(Z(\eta))=n\}
      \BE\big[f\big(\eta_{Z},\lambda(B\cap Z(\eta)^c),
      \eta\big(B\cap Z(\eta)^c)\big)\mid \eta_{Z(\eta)}\big]\Big]\\
    &=\BE\sum_{m\in\bar{\N}_0} \I\{\eta(Z(\eta))=n\}
      f\big(\eta_{Z},\lambda(B\cap Z(\eta)^c),m\big)
      \frac{\lambda(B\cap Z(\eta)^c)^m}{m!}
      \exp\big[\!-\lambda(B\cap Z(\eta)^c)\big].
  \end{align*}
  By \cite[Proposition A.5]{LaPeYo23},
  \begin{displaymath}
    I_n=\sum_{m\in\bar{\N}_0}\frac{1}{n!}
    \int_{C_n}f\big(\delta_{\bx},\lambda(B\cap [\bx]),m\big)
    \frac{\lambda(B\cap [\bx])^m}{m!}
    \exp\big[-\lambda(B\cap [\bx])\big]
    \exp\big[-\lambda([\bx]^c)\big]\,\lambda^n(d\bx).
  \end{displaymath}
  Similarly, we obtain
  \begin{align*}
    \BE &\big[\I\{\eta_Z(\BX)=0\}
          f\big(\eta_Z,\lambda_{Z^c}(B),\eta_{Z^c}(B)\big)\big]\\
        &=\sum_{m\in\bar{\N}_0}f\big(0,\lambda(B\cap Z(0)^c),m\big)
          \frac{\lambda(B\cap Z(0)^c)^m}{m!}
          \exp\big[\!-\lambda(B\cap Z(0)^c)\big]
          \exp\big[\!-\lambda(Z(0))\big].
  \end{align*}
  Taking the sum over $n\in\N_0$ concludes the proof.
\end{proof}

\begin{remark}
  Assume that $\lambda(B)<\infty$. Then 
  \eqref{e:2.4a} simplifies to
  \begin{align}
    \label{e:2.4b}
    \BE \big[\I\{\eta_Z(\BX)<\infty\}
    f\big(\eta_Z,\lambda_{Z^c}(B),\eta_{Z^c}(B)\big)\big]
    =\sum^\infty_{m=0}\int \frac{t^m}{m!} e^{-t}f(\mu,t,m)\,\lambda^*_B(d(\mu,t)).
  \end{align}
  If $\lambda(\BX)=\infty$, then $\BP(\eta(\BX)=\infty)=1$,
  and \eqref{e:2.4a} simplifies in the case $B=\BX$ to
  \begin{displaymath}
    \BE\big[\I\{\eta_Z(\BX)<\infty\}
    f\big(\eta_Z,\lambda(Z(\eta)^c),\eta_{Z^c}(\BX)\big)\big]
    =\int f(\mu,t,\infty)\,\lambda^*_{\BX}(d(\mu,t)).
  \end{displaymath}
\end{remark}

For $m\in\N_0$ and $k\in\N_0$, we denote by
$ m^{(k)}:=m\cdots (m-k+1)$, the $k$-th descending factorial of $m$.
In the case $\lambda(B)<\infty$ and for special choices of the
function $f$ we obtain the following corollaries.

\begin{corollary}
  \label{c2}
  Let $B\in\cX_0$ and let $f\colon \bN\times\R_+\to\R_+$ be measurable
  and $k\in\N_0$.  Then
  \begin{displaymath}
    \BE \big[\I\{\eta_Z(\BX)<\infty\}
    f\big(\eta_Z,\lambda_{Z^c}(B)\big)\eta_{Z^c}(B)^{(k)}\big]
    =\int f(\mu,t)t^k\,\lambda^*_B(d(\mu,t)).
  \end{displaymath}
\end{corollary}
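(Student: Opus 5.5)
We apply Theorem~\ref{t1} (in the simplified form \eqref{e:2.4b}, which is available because $B\in\cX_0$ gives $\lambda(B)<\infty$) to the function
\begin{displaymath}
  \tilde f(\mu,t,m):=f(\mu,t)\,m^{(k)},\qquad (\mu,t,m)\in\bN\times[0,\infty]\times\bar\N_0 .
\end{displaymath}
We extend $f$ to $t=\infty$ in any measurable way, say by $0$. Since $\lambda(B)<\infty$, the measure $\lambda^*_B$ is concentrated on $\bN\times[0,\lambda(B)]$, so this extension is irrelevant. On the left-hand side we get exactly the expectation in the corollary, with $\lambda_{Z^c}(B)\le\lambda(B)<\infty$. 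Also $\eta_{Z^c}(B)\le\eta(B)<\infty$ almost surely, so $\eta_{Z^c}(B)^{(k)}$ is a well-defined non-negative integer.

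By \eqref{e:2.4b} the right-hand side equals
\begin{displaymath}
  \int \sum_{m=0}^\infty \frac{t^m}{m!}e^{-t}\, m^{(k)}\, f(\mu,t)\,\lambda^*_B(d(\mu,t)).
\end{displaymath}
Here we interchange sum and integral by Tonelli, since everything is non-negative. The inner sum is the $k$-th factorial moment of a Poisson distribution with parameter $t\in[0,\infty)$, which equals $t^k$:
\begin{displaymath}
  \sum_{m=k}^\infty \frac{t^m}{(m-k)!}e^{-t}=t^k\sum_{j=0}^\infty\frac{t^j}{j!}e^{-t}=t^k .
\end{displaymath}
The terms with $m<k$ vanish because $m^{(k)}=0$ there, and for $k=0$ we use the convention $m^{(0)}=1$. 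Substituting this gives the claimed formula $\int f(\mu,t)t^k\,\lambda^*_B(d(\mu,t))$.

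There is no real obstacle. The only points to check are bookkeeping ones:
\begin{itemize}
\item that $\lambda^*_B$ gives no mass to $t=\infty$ when $\lambda(B)<\infty$, so the convention for $e^{-t}t^m/m!$ at $t=\infty$ never enters;
\item that the value $m=\infty$ does not appear in the sum;
\item that the case $t=0$ is consistent, since then the Poisson law is $\delta_0$ and $0^k$ equals the $k$-th factorial moment, using $0^0=1$.
\end{itemize}
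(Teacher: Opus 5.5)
Your proposal is correct and follows the paper's own proof: apply \eqref{e:2.4b} to $f(\mu,t)\,m^{(k)}$ and use the Poisson factorial moment identity $\sum_{m\ge k} e^{-t}t^m/(m-k)! = t^k$. Your extra checks (no $\lambda^*_B$-mass at $t=\infty$, extending $f$ to $t=\infty$, and the Tonelli interchange) make explicit what the paper's one-line proof leaves implicit.
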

\begin{proof}
  By \eqref{e:2.4b}, 
  \begin{align*}
    \BE \big[\I\{\eta_Z(\BX)<\infty\}
    f\big(\eta_Z,\lambda_{Z^c}(B)\big)\eta_{Z^c}(B)^{(k)}\big]
    &=\sum^\infty_{m=0}\int \frac{t^m}{m!}e^{-t} \frac{m!}{(m-k)!}
    f(\mu,t)\,\lambda^*_B(d(\mu,t)),
  \end{align*}
  which implies the result. 
\end{proof}

\begin{corollary}  \label{c3}
  Let $B\in\cX_0$.  Let $f\colon \N_0\times\R_+\to\R_+$ be measurable
  and let $z\in\R_+$. Then
 \begin{displaymath}
    \BE \Big[\I\{\eta_Z(\BX)<\infty\}
    f\big(\eta_Z,\lambda_{Z^c}(B)\big)
    e^{-z\eta_{Z^c}(B)}\Big]
    =   \int f(\mu,t)e^{-t(1-e^{-z})}\,\lambda^*_B(d(\mu,t)).
  \end{displaymath}
\end{corollary}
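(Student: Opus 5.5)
This follows the same route as Corollary~\ref{c2}. We specialise \eqref{e:2.4b}, which is available because $B\in\cX_0$ gives $\lambda(B)<\infty$. The test function is
\[
g(\mu,t,m):=f(\mu,t)e^{-zm},
\]
which is measurable and non-negative on $\bN\times[0,\infty]\times\bar\N_0$. Here $f(\mu,t)$ is read with $\mu$ in the first slot, as in the displayed statement. Since $\lambda(B)<\infty$, the measure $\lambda^*_B$ is concentrated on $\bN\times[0,\lambda(B)]$, so $t$ is finite. Also, on the left-hand side $\eta_{Z^c}(B)$ is a.s.\ finite, so the value $m=\infty$ plays no role; this is exactly why \eqref{e:2.4b} sums only over $m\in\N_0$.

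Inserting $g$ into \eqref{e:2.4b}, the left-hand side is precisely the expectation in the statement. The right-hand side becomes
\[
\sum_{m=0}^\infty\int \frac{t^m}{m!}e^{-t}f(\mu,t)e^{-zm}\,\lambda^*_B(d(\mu,t)).
\]
All terms are non-negative, so Tonelli's theorem lets us interchange the sum and the integral. The inner sum is the exponential series:
\[
\sum_{m=0}^\infty \frac{(te^{-z})^m}{m!}e^{-t}=e^{-t}e^{te^{-z}}=e^{-t(1-e^{-z})}.
\]
This gives the asserted identity.

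The only point needing care is the justification of \eqref{e:2.4b}, i.e.\ that the $m=\infty$ term is absent. This holds because $t\le\lambda(B)<\infty$ for $\lambda^*_B$-a.e.\ $(\mu,t)$, and under the convention $e^{-t}t^m/m!$ the $m=\infty$ weight then vanishes. Beyond that the argument is a routine generating-function computation; one could equally derive it from Corollary~\ref{c2} by expanding $e^{-z\eta_{Z^c}(B)}$ in factorial moments, but the direct route avoids convergence issues.
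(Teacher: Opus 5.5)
Your proposal is correct and follows the paper's proof essentially verbatim: substitute $f(\mu,t)e^{-zm}$ into \eqref{e:2.4b} and sum the exponential series $\sum_{m\ge0} e^{-t}(te^{-z})^m/m! = e^{-t(1-e^{-z})}$. Your extra remarks spell out what the paper leaves implicit: Tonelli for interchanging sum and integral, $t\le\lambda(B)<\infty$ on the support of $\lambda^*_B$, and reading $f$ as a function on $\bN\times\R_+$.
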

\begin{proof} By \eqref{e:2.4b},
  \begin{align*}
    \BE \Big[\I\{\eta_Z(\BX)<\infty\}
    f\big(\eta_Z,\lambda_{Z^c}(B)\big)
    e^{-z\eta_{Z^c}(B)}\Big]
    &=\int f(\mu,t)\sum_{m=0}^\infty
      e^{-t}\frac{t^m}{m!}e^{-zm}
      \,\lambda^*_B(d(\mu,t))  \\
    &=\int f(\mu,t) e^{-t}e^{te^{-z}}
    \,\lambda^*_B(d(\mu,t))  \\
    &= \int f(\mu,t) e^{-t(1-e^{-z})}\,\lambda^*_B(d(\mu,t)). \qedhere
  \end{align*}
\end{proof}

\section{Markov sets}
\label{sec:markov-sets}

In this section we will work with a Markov set $Z$,
starting with the following simple fact.

\begin{lemma}\label{l:fi}
  Assume that $Z$ is a Markov set. Then
  $\BP(\lambda_{Z^c}(B)<\infty)=\BP(\eta_{Z^c}(B)<\infty)$.
\end{lemma}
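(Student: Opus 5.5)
By the Markov property \eqref{eMarkov2}, conditionally on $(\eta_Z,\lambda_Z)$ the process $\eta_{Z^c}$ is Poisson with intensity measure $\lambda_{Z^c}$. Hence $\eta_{Z^c}(B)$ is conditionally Poisson with parameter $\lambda_{Z^c}(B)=\lambda(B)-\lambda_Z(B)$. This parameter is measurable with respect to $\lambda_Z$ when $\lambda(B)<\infty$, and with respect to $\lambda_{Z^c}=\lambda-\lambda_Z$ in general, since $\lambda$ is deterministic. We use the convention that a Poisson variable with parameter $\infty$ equals $\infty$ almost surely; this is consistent with $\Pi_\nu$ for non-locally-finite restrictions and with the conventions $e^{-t}t^m/m!$ for $t=\infty$ fixed before Theorem~\ref{t1}. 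Since a Poisson variable with finite parameter is finite almost surely, we get
\begin{displaymath}
  \BP\big(\eta_{Z^c}(B)<\infty\mid \eta_Z,\lambda_Z\big)=\I\{\lambda_{Z^c}(B)<\infty\},\quad \BP\text{-a.s.}
\end{displaymath}
Taking expectations yields the claim.

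In more detail, the steps are as follows. First, apply \eqref{eMarkov2} with the event $\{\mu\in\bN:\mu(B)<\infty\}$, which is measurable in $\bN$. This gives $\BP(\eta_{Z^c}(B)<\infty\mid\eta_Z,\lambda_Z)=\Pi_{\lambda_{Z^c}}(\{\mu:\mu(B)<\infty\})$. Second, evaluate $\Pi_\nu(\{\mu:\mu(B)<\infty\})$ for a measure $\nu$ with $\nu\le\lambda$. If $\nu(B)<\infty$, then $\mu(B)$ is Poisson with mean $\nu(B)$ under $\Pi_\nu$, so the probability is $1$. If $\nu(B)=\infty$, then $B$ can be written as the disjoint union of the sets $B\cap(B_{n+1}\setminus B_n)$, each of finite $\nu$-measure because $\nu$ is locally finite. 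Under $\Pi_\nu$ the counts on these pieces are independent Poisson variables with means summing to $\infty$. By the Borel--Cantelli argument (or the standard fact from \cite{LastPenrose17} that $\mu(B)$ is Poisson with parameter $\nu(B)$ even when this is infinite), their sum is $\infty$ almost surely, so the probability is $0$. Third, integrate to obtain $\BP(\eta_{Z^c}(B)<\infty)=\BE\,\I\{\lambda_{Z^c}(B)<\infty\}$.

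The main obstacle is the case $\nu(B)=\infty$ in the second step. It requires a careful argument that a Poisson process with locally finite intensity puts infinitely many points in a set of infinite intensity, which the Borel--Cantelli decomposition above handles. This uses that $\lambda_{Z^c}$ is locally finite, which holds because it is dominated by $\lambda$. A secondary point is measurability: we need the conditional probability to be a measurable function of $\lambda_{Z^c}(B)$. This is immediate from $\lambda_{Z^c}=\lambda-\lambda_Z$ on sets of finite $\lambda$-measure, extended by monotone limits over $B\cap B_n$.
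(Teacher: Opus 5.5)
Your proposal is correct and follows the same route as the paper: condition on $(\eta_Z,\lambda_Z)$, use \eqref{eMarkov2} to identify the conditional probability as $\Pi_{\lambda_{Z^c}}(\{\mu:\mu(B)<\infty\})=\I\{\lambda_{Z^c}(B)<\infty\}$, and take expectations. Your decomposition of $B$ into pieces of finite measure, combined with Borel--Cantelli, spells out the standard fact that the paper states in one line. That fact is that $\mu(B)$ is Poisson with parameter $\lambda_{Z^c}(B)$, and is therefore infinite almost surely when this parameter is infinite.
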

\begin{proof}
  By the spatial Markov property \eqref{eMarkov2},
  \begin{align*}
    \BP(\eta_{Z^c}(B)<\infty)
    &=
    \BE\big[
      \BP\big(\eta_{Z^c}(B)<\infty\mid \eta_Z,\lambda_Z\big)\big]  \\
    &=
    \BE\big[\Pi_{\lambda_{Z^c}}\big(\mu\in\bN:\mu(B)<\infty\big)\big]  
    =\BE \I\{\lambda_{Z^c}(B)<\infty\}.
  \end{align*}
  Indeed, under $\Pi_{\lambda_{Z^c}}$, the random variable $\mu(B)$
  is Poisson with parameter $\lambda_{Z^c}(B)$, and is finite almost
  surely if and only if this parameter is finite.
\end{proof}

We continue with a symmetry property of the distribution of
$\big(\lambda_{Z^c}(B),\eta_{Z^c}(B)\big)$

\begin{proposition}  \label{c2.3}
  Assume that $Z$ is a Markov set and let $B\in\cX$
  such that $\BP(\lambda_{Z^c}(B)<\infty)=1$. Let $k,l\in\N_0$. Then
  \begin{align} \label{e:2.9}
    \BE \big[\eta_{Z^c}(B)^{(k)}\lambda_{Z^c}(B)^l\big]
    =\BE \lambda_{Z^c}(B)^{k+l}=\BE \eta_{Z^c}(B)^{(k+l)}.
  \end{align}
\end{proposition}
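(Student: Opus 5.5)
The plan is to condition on $(\eta_Z,\lambda_Z)$ and use the Markov property \eqref{eMarkov2}. Under $\Pi_{\lambda_{Z^c}}$ the variable $\mu(B)$ is Poisson distributed with parameter $\lambda_{Z^c}(B)$. Since $\lambda_{Z^c}=\lambda-\lambda_Z$, the quantity $\lambda_{Z^c}(B)$ is measurable with respect to $\lambda_Z$ on the event $\{\lambda(B)<\infty\}$. More carefully, because $\lambda$ is deterministic, $\lambda_{Z^c}$ is a measurable function of $\lambda_Z$ in general: for $C\in\cX_0$ we have $\lambda_{Z^c}(C\cap B)=\lambda(C\cap B)-\lambda_Z(C\cap B)$, and we let $C\uparrow\BX$ along the sets $B_n$. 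So $\lambda_{Z^c}(B)$ can be pulled out of the conditional expectation.

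The key steps, in order, are as follows.
\begin{enumerate}[(i)]
\item Write $T:=\lambda_{Z^c}(B)$ and $M:=\eta_{Z^c}(B)$. By assumption $T<\infty$ almost surely, and by Lemma~\ref{l:fi} also $M<\infty$ almost surely, so the factorials are well defined.
\item Apply the tower property and \eqref{eMarkov2}:
\begin{displaymath}
\BE\big[M^{(k)}T^l\big]=\BE\Big[T^l\,\BE\big[M^{(k)}\mid \eta_Z,\lambda_Z\big]\Big]
=\BE\Big[T^l\int \mu(B)^{(k)}\,\Pi_{\lambda_{Z^c}}(d\mu)\Big].
\end{displaymath}
\item Use the classical fact that the $k$-th factorial moment of a Poisson variable with parameter $t$ equals $t^k$. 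The right-hand side above then becomes $\BE[T^{k+l}]$, which is the first equality in \eqref{e:2.9}.
\item Specialise the first equality to $l=0$ with $k$ replaced by $k+l$. This gives $\BE M^{(k+l)}=\BE T^{k+l}$, which is the second equality.
\end{enumerate}

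All quantities involved are nonnegative, so conditional expectations of possibly infinite nonnegative variables are legitimate and no integrability assumption is needed. Both sides may equal $+\infty$ simultaneously.

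The only delicate point is the measurability claim in step (i): that $T$ is $\sigma(\eta_Z,\lambda_Z)$-measurable even when $\lambda(B)=\infty$. I would handle it by approximating $B$ with $B\cap B_n\in\cX_0$. On each such set $\lambda_{Z^c}(B\cap B_n)=\lambda(B\cap B_n)-\lambda_Z(B\cap B_n)$ is a finite difference and hence measurable. Then $T=\lim_n\lambda_{Z^c}(B\cap B_n)$ is a monotone limit of measurable functions, so it is measurable as well. Alternatively, and more directly, the conditional law in \eqref{eMarkov2} is itself required to be a measurable function of the conditioning variables, which gives the same conclusion.
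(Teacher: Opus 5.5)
Your proposal is correct and takes the same route as the paper's proof. Both condition on $(\eta_Z,\lambda_Z)$, apply the Markov property together with the Poisson factorial moment $t^k$ to get the first identity, and then set $l=0$ (with $k+l$ in place of $k$) for the second. Your approximation via $B\cap B_n$, showing that $\lambda_{Z^c}(B)$ is $\sigma(\eta_Z,\lambda_Z)$-measurable and can be pulled out of the conditional expectation, makes explicit a step the paper leaves implicit.
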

\begin{proof}
  By Lemma~\ref{l:fi}, $\BP(\eta_{Z^c}(B)<\infty)=1$. By the spatial
  Markov property \eqref{eMarkov2} and the factorial moments of a Poisson
  distribution,
  \begin{align*}
    \BE \big[\eta_{Z^c}(B)^{(k)}\lambda_{Z^c}(B)^l\big]
    &=
    \BE\Big[
      \lambda_{Z^c}(B)^l
      \BE\big[\eta_{Z^c}(B)^{(k)}\mid \eta_Z,\lambda_Z\big]
    \Big]  \\
    &=
    \BE\big[\lambda_{Z^c}(B)^l\lambda_{Z^c}(B)^k\big]  
    =
    \BE\lambda_{Z^c}(B)^{k+l}.
  \end{align*}
  This proves the first identity in \eqref{e:2.9}. In particular we
  obtain for $l=0$ that
  \begin{equation}
    \label{e:momentshull}
    \BE \eta_{Z^c}(B)^{(k)}=\BE\lambda_{Z^c}(B)^k,
  \end{equation}
  proving the second identity in \eqref{e:2.9}.
\end{proof}

The Efron formula \eqref{e:Efron} is a very special case of the
following result. The Mecke equation
would be an alternative direct way to derive this result, see the
proof of the forthcoming Theorem~\ref{t:2.7} in the case $k=1$.

\begin{corollary}
  \label{c2.5}
  Let $Z$ be a Markov set. Then
  \begin{equation}\label{e:Efrongeneral}
    \BE \eta_Z(B)=\BE \lambda_Z(B),\quad B\in\cX.
  \end{equation}
\end{corollary}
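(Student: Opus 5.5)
Since $\eta=\eta_Z+\eta_{Z^c}$ and $\lambda=\lambda_Z+\lambda_{Z^c}$, the natural route is to subtract the $k=1$ case of \eqref{e:momentshull} from the trivial identity $\BE\eta(B)=\lambda(B)$. If $\lambda(B)<\infty$, then $\lambda_{Z^c}(B)\le\lambda(B)<\infty$ surely. Hence Proposition~\ref{c2.3} applies with $k=1$, $l=0$ and gives $\BE\eta_{Z^c}(B)=\BE\lambda_{Z^c}(B)$, a finite quantity. The Mecke (or simply Campbell) formula gives $\BE\eta(B)=\lambda(B)$. Therefore
\begin{displaymath}
  \BE\eta_Z(B)=\lambda(B)-\BE\eta_{Z^c}(B)=\lambda(B)-\BE\lambda_{Z^c}(B)=\BE\lambda_Z(B),
\end{displaymath}
where all terms are finite, so the subtractions are legitimate.

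For general $B\in\cX$ the subtraction argument breaks down, since infinite values may occur. I would instead use the localising sequence $B_n\in\cX_0$ increasing to $\BX$. Each $B\cap B_n$ lies in $\cX_0$, so $\lambda(B\cap B_n)<\infty$ by local finiteness, and the previous step yields
\begin{displaymath}
  \BE\eta_Z(B\cap B_n)=\BE\lambda_Z(B\cap B_n).
\end{displaymath}
As $n\to\infty$, the quantities $\eta_Z(B\cap B_n)$ and $\lambda_Z(B\cap B_n)$ increase to $\eta_Z(B)$ and $\lambda_Z(B)$, by continuity of measures from below applied pointwise in $\omega$. Monotone convergence on both sides then gives \eqref{e:Efrongeneral}, with both sides possibly infinite.

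The only delicate step is the finiteness bookkeeping in the first paragraph. The Markov property is used only through \eqref{e:momentshull}, which is why I reduce to sets of finite $\lambda$-measure before subtracting. As a cross-check, one can also argue directly from \eqref{eMarkov2}. Conditionally on $(\eta_Z,\lambda_Z)$, the restriction $\eta_{Z^c}$ is Poisson with intensity $\lambda_{Z^c}$, so $\BE[\eta_{Z^c}(B)\mid\eta_Z,\lambda_Z]=\lambda_{Z^c}(B)$. Taking expectations recovers the same identity $\BE\eta_{Z^c}(B)=\BE\lambda_{Z^c}(B)$.
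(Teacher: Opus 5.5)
Your proof is correct and follows the paper's argument: the paper also reduces to $\lambda(B)<\infty$ by monotone convergence and then subtracts the $k=1$ case of \eqref{e:momentshull} from $\BE\eta(B)=\lambda(B)$. Your version states the finiteness check and the localisation via $B\cap B_n\in\cX_0$ explicitly, where the paper leaves them implicit.
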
 
\begin{proof}
  By monotone convergence, it suffices to assume $\lambda(B)<\infty$.
  By \eqref{e:momentshull},
  \begin{displaymath}
    \BE \eta_Z(B)
    =\BE\eta(B)-\BE\eta_{Z^c}(B)
    =\lambda(B)-\BE\lambda_{Z^c}(B)
    =\BE \lambda_Z(B). \qedhere
  \end{displaymath}
\end{proof}

\begin{remark}
  In the convex hull case the formula \eqref{e:momentshull} for
  $B=\R^d$ becomes
  \begin{equation}
    \label{e:713}
    \BE \lambda(Z(\eta)^c)^{k}= \BE \eta_{Z^c}(\R^d)^{(k)},
  \end{equation}
  which is the Poisson version of equation (2.4) in \cite{Buchta05}.  In
  fact, the latter implies \eqref{e:713} via a simple
  computation. Since in our case
  $\eta_Z(\BX)+\eta_{Z^c}(\BX)=\eta(\BX)$ is not deterministic, it
  does not seem to be possible to derive from \eqref{e:713} a Poisson
  counterpart of \cite[Theorem~1]{Buchta05} or of
  \cite[Theorem~1]{Buchta23}.
\end{remark}

Next, we compress \eqref{e:2.9} in just one formula.

\begin{theorem} \label{t:jointtransfrom}
  Assume that $Z$ is a Markov set, and let $B\in\cX$
  such that $\BP(\lambda_{Z^c}(B)<\infty)=1$. 
  Then,  for all $s,t\in\R$, 
  \begin{equation}
    \label{e:transform}
    \BE \big[(s+1)^{\eta_{Z^c}(B)}e^{t\lambda_{Z^c}(B)}\big]
    =\BE e^{(s+t)\lambda_{Z^c}(B)}.
  \end{equation}
\end{theorem}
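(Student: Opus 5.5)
The plan is to condition on $(\eta_Z,\lambda_Z)$ and use the Markov property \eqref{eMarkov2} directly, rather than summing the moment identities \eqref{e:2.9}, since the latter would need convergence of power series that is not available for arbitrary $s,t$. First, $\lambda_{Z^c}=\lambda-\lambda_Z$ is a measurable function of $\lambda_Z$. Hence $e^{t\lambda_{Z^c}(B)}$ is measurable with respect to $\sigma(\eta_Z,\lambda_Z)$ and can be pulled out of the conditional expectation. By Lemma~\ref{l:fi} and the assumption $\BP(\lambda_{Z^c}(B)<\infty)=1$, we also have $\eta_{Z^c}(B)<\infty$ almost surely. So all quantities are finite, although possibly with infinite expectations.

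Next we treat $s\ge -1$, so that $(s+1)^{\eta_{Z^c}(B)}\ge 0$. The convention $0^0=1$ handles $s=-1$. Both sides of \eqref{e:transform} are then expectations of nonnegative random variables, and the identity is meant in $[0,\infty]$. By \eqref{eMarkov2}, conditionally on $(\eta_Z,\lambda_Z)$ the variable $\eta_{Z^c}(B)$ is Poisson with parameter $\lambda_{Z^c}(B)$. The Poisson probability generating function then gives
\begin{displaymath}
  \BE\big[(s+1)^{\eta_{Z^c}(B)}\mid \eta_Z,\lambda_Z\big]
  =\exp\big[s\lambda_{Z^c}(B)\big]\quad \BP\text{-a.s.}
\end{displaymath}
Multiplying by $e^{t\lambda_{Z^c}(B)}$ and taking expectations yields \eqref{e:transform}. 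Conditional expectation of nonnegative variables raises no integrability issue, so the equality holds in $[0,\infty]$ for every $t\in\R$.

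The main obstacle is the range $s<-1$, where $(s+1)^{m}$ changes sign and the left-hand side may fail to be well defined. I would handle it as follows. Whenever $\BE e^{(|s+1|-1+t)\lambda_{Z^c}(B)}<\infty$, the case $s'=|s+1|-1\ge -1$ above shows that $|s+1|^{\eta_{Z^c}(B)}e^{t\lambda_{Z^c}(B)}$ is integrable. The same conditional Poisson computation, which is valid for the generating function at any real argument $s+1$, then applies by dominated or conditional Fubini arguments. Otherwise the identity is interpreted as saying that both sides are infinite or undefined together. If the paper intends $s\ge-1$ implicitly, the second step already completes the proof.
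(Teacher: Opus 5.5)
Your proposal is correct and is essentially the paper's own argument: condition on $(\eta_Z,\lambda_Z)$, pull out $e^{t\lambda_{Z^c}(B)}$, and evaluate the conditional Poisson generating function $\int (s+1)^{\mu(B)}\,\Pi_{\lambda_{Z(\eta)^c}}(d\mu)=e^{s\lambda_{Z^c}(B)}$ using \eqref{eMarkov2}. Your integrability bookkeeping goes beyond the paper's formal computation: equality in $[0,\infty]$ for $s\ge -1$, and for $s<-1$ under $\BE e^{(|s+1|-1+t)\lambda_{Z^c}(B)}<\infty$. However, you should drop the closing claim that otherwise both sides are ``infinite or undefined together'', because the right-hand side is always defined in $[0,\infty]$ and can be finite while the left-hand side is of the form $\infty-\infty$.
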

\begin{proof}
  Using the spatial Markov property  we have
  \begin{align*}
    \BE \big[(s+1)^{\eta_{Z^c}(B)}e^{t\lambda_{Z^c}(B))}\big]
    &=\BE \Big[e^{t\lambda_{Z^c}(B)}\BE\big[(s+1)^{\eta_{Z^c}(B)}\mid \eta_Z,\lambda_Z\big]\Big]\\
    &=\BE \Big[e^{t\lambda_{Z^c}(B)}\int (s+1)^{\mu(B)}\Pi_{\lambda_{Z(\eta)^c}}(d\mu)\Big]\\
    &=\BE \big[e^{t\lambda_{Z^c}(B)}e^{s\lambda_{Z^c}(B)}\big]. \qedhere
  \end{align*}
\end{proof}

\begin{remark}
  Note that the choice of $t=0$ turns \eqref{e:transform} into
  Theorem~1 from \cite{BeReitz15}. The results of this and the
  previous section generalize Theorems~1 and~2 in \cite{BeReitz15} in
  several ways.  First, they apply to general stopping sets and in
  particular to hull operators on general spaces, not just to the
  convex hull in Euclidean space. Second, they cover joint
  distributions and not just the marginal distributions of
  $\eta_{Z^c}(B)$ and $\lambda_{Z^c}(B)$.
  Finally, our results do not require any further properties of
  $\lambda$ and even allow for the case $\lambda(\BX)=\infty$.
\end{remark}

While Theorem~\ref{t:jointtransfrom} describes a general Cox process
property, we shall now use the fact that $\eta_Z+\eta_{Z^c}$ is a
Poisson process. The next result generalizes
\cite[Equation~(12)]{Zuyev99}.

\begin{theorem}
  Suppose that $Z$ is a Markov set and let
  $f\colon \BX\to\R_+$ be measurable with
  \begin{displaymath}
    \lambda(1-e^{-f})<\infty.
  \end{displaymath}
  Then
  \begin{align}\label{e:23.16a}
    \BE \big[\exp[-\eta_Z(f)+\lambda_Z(1-e^{-f})]\big]=1.
  \end{align}
\end{theorem}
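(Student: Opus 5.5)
The plan is to use two facts: $\eta=\eta_Z+\eta_{Z^c}$ is a Poisson process with intensity $\lambda$, and the Markov property \eqref{eMarkov2} gives the conditional Laplace functional of $\eta_{Z^c}$. From the Poisson Laplace functional and $\lambda(1-e^{-f})<\infty$ we have
\begin{displaymath}
  \BE e^{-\eta(f)}=\exp[-\lambda(1-e^{-f})]\in(0,\infty).
\end{displaymath}
Conditioning on $(\eta_Z,\lambda_Z)$ and applying \eqref{eMarkov2} yields
\begin{align*}
  \BE e^{-\eta(f)}
  &=\BE\Big[e^{-\eta_Z(f)}\,\BE\big[e^{-\eta_{Z^c}(f)}\mid \eta_Z,\lambda_Z\big]\Big]\\
  &=\BE\Big[e^{-\eta_Z(f)}\int e^{-\mu(f)}\,\Pi_{\lambda_{Z^c}}(d\mu)\Big]
  =\BE\Big[e^{-\eta_Z(f)}\exp[-\lambda_{Z^c}(1-e^{-f})]\Big].
\end{align*}
Here $\eta_Z(f)$ is treated as a function of $\eta_Z$ and $\lambda_{Z^c}=\lambda-\lambda_Z$ as a function of $\lambda_Z$, and we use the Laplace functional of the Poisson process with intensity $\lambda_{Z^c}$.

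Next we split $\lambda=\lambda_Z+\lambda_{Z^c}$. Since $\lambda(1-e^{-f})<\infty$, both $\lambda_Z(1-e^{-f})$ and $\lambda_{Z^c}(1-e^{-f})$ are finite pointwise, and they sum to $\lambda(1-e^{-f})$. So we can write $\exp[-\lambda_{Z^c}(1-e^{-f})]=\exp[-\lambda(1-e^{-f})]\exp[\lambda_Z(1-e^{-f})]$. Substituting this gives
\begin{displaymath}
  \exp[-\lambda(1-e^{-f})]=\exp[-\lambda(1-e^{-f})]\,\BE\big[\exp[-\eta_Z(f)+\lambda_Z(1-e^{-f})]\big].
\end{displaymath}
Dividing by the finite positive constant $\exp[-\lambda(1-e^{-f})]$ gives \eqref{e:23.16a}.

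The only point needing care is technical rather than conceptual. The integrand $e^{-\eta_Z(f)}e^{\lambda_Z(1-e^{-f})}$ is unbounded, so we apply the conditioning step in its nonnegative form, i.e.\ \eqref{e:generalMP} for nonnegative measurable functions, which is valid by monotone convergence. We also check measurability of $\nu\mapsto\nu(1-e^{-f})$ on $\bM$ and of $\mu\mapsto\mu(f)$ on $\bN$, both standard. We allow $\eta_Z(f)=\infty$, in which case the integrand is $0$ by convention, consistent with all the identities above. No finiteness of $\eta_Z$ is required.
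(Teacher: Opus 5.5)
Your proposal is correct and follows the paper's proof essentially verbatim. You compute $\BE e^{-\eta(f)}$ once via the Poisson Laplace functional and once by conditioning on $(\eta_Z,\lambda_Z)$ via \eqref{eMarkov2}, then split $\lambda(1-e^{-f})=\lambda_Z(1-e^{-f})+\lambda_{Z^c}(1-e^{-f})$ and cancel the finite constant. One small correction: the integrand $e^{-\eta_Z(f)+\lambda_Z(1-e^{-f})}$ is not unbounded, since it is bounded by the constant $e^{\lambda(1-e^{-f})}<\infty$, so your monotone-convergence caveat is harmless but unnecessary.
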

\begin{proof}
  Put $g:=1-e^{-f}$. By the spatial Markov property,
  \begin{align*}
    \BE e^{-\eta(f)}
    &=
    \BE\big[e^{-\eta_Z(f)}e^{-\eta_{Z^c}(f)}\big]  \\
    &=
    \BE\Big[
      e^{-\eta_Z(f)}
      \BE\big[e^{-\eta_{Z^c}(f)}\mid \eta_Z,\lambda_Z\big]
    \Big]  
    =
    \BE\big[e^{-\eta_Z(f)}e^{-\lambda_{Z^c}(g)}\big].
  \end{align*}
  Since $\eta$ is Poisson,
  \begin{displaymath}
    \BE e^{-\eta(f)}=e^{-\lambda(g)}.
  \end{displaymath}
  Therefore,
  \begin{equation}
    \label{e:23.16b}
    \BE\big[e^{-\eta_Z(f)}e^{-\lambda_{Z^c}(g)}\big]
    = e^{-\lambda(g)}.
  \end{equation}
  Since $\lambda(g)<\infty$ and
  $\lambda(g)=\lambda_Z(g)+\lambda_{Z^c}(g)$, multiplying by
  $e^{\lambda(g)}$ gives
  \begin{displaymath}
    \BE\big[e^{-\eta_Z(f)+\lambda_Z(g)}\big]=1. \qedhere
  \end{displaymath}
\end{proof}

\begin{corollary}\label{c:23.11}  
  Suppose that $Z\colon \bN\to\cX$ is a Markov set. 
  Let $B\in\cX$ and $k\in\N$. Then
  \begin{equation} \label{e:2.11a}
    \sum^k_{i=0}\binom{k}{i}\,\BE \big[\eta_Z(B)^{(i)}
    \lambda_{Z^c}(B)^{k-i}\big]=\lambda(B)^k.
  \end{equation}
  If $B\in\cX_0$, then we have moreover,
  \begin{align} \label{e:2.11b}
    \sum^k_{i=0}(-1)^i\binom{k}{i}\,\BE
    \big[\eta_Z(B)^{(i)}\lambda_{Z}(B)^{k-i}\big]=0.
  \end{align}
\end{corollary}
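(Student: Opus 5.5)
Both identities will come from the exponential identity \eqref{e:23.16b} (equivalently \eqref{e:23.16a}), applied with $f=-\log(1-s)\I_B$ and then expanded as a power series in $s$. Fix $B\in\cX$ with $\lambda(B)<\infty$; the general case of \eqref{e:2.11a} follows at the end by monotone convergence. Indeed, replace $B$ by $B\cap B_n$ and let $n\to\infty$. Every term on the left is nonnegative and increases in $n$, because $\eta_Z(B\cap B_n)^{(i)}$ increases with $n$ for integer arguments. The right-hand side $\lambda(B\cap B_n)^k$ also increases to $\lambda(B)^k$.

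For $s\in[0,1)$ take $f:=-\log(1-s)\I_B$. Then $g=1-e^{-f}=s\I_B$, so $\lambda(g)=s\lambda(B)<\infty$, and \eqref{e:23.16b} gives
\begin{displaymath}
\BE\big[(1-s)^{\eta_Z(B)}e^{-s\lambda_{Z^c}(B)}\big]=e^{-s\lambda(B)}.
\end{displaymath}
Multiplying by $e^{s\lambda(B)}$ and writing $\lambda(B)-\lambda_{Z^c}(B)=\lambda_Z(B)$ yields \eqref{e:23.16a} in the form $\BE[(1-s)^{\eta_Z(B)}e^{s\lambda_Z(B)}]=1$. To obtain \eqref{e:2.11a}, I would instead substitute $s\mapsto -s$. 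This needs \eqref{e:23.16b} for $f=-\log(1+s)\I_B\le0$, which is outside the stated hypothesis $f\ge 0$. I expect this to be the main obstacle. The cleanest fix is to redo the conditioning step directly: by \eqref{eMarkov2},
\begin{displaymath}
\BE[(1+s)^{\eta_{Z^c}(B)}\mid\eta_Z,\lambda_Z]=e^{s\lambda_{Z^c}(B)},
\end{displaymath}
which is valid for all $s\ge0$ since $\lambda_{Z^c}(B)\le\lambda(B)<\infty$. Combining this with $\BE(1+s)^{\eta(B)}=e^{s\lambda(B)}$ gives
\begin{displaymath}
\BE\big[(1+s)^{\eta_Z(B)}e^{s\lambda_{Z^c}(B)}\big]=e^{s\lambda(B)},\qquad s\ge0.
\end{displaymath}

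Now expand both sides in $s$. We have $(1+s)^{m}=\sum_i \binom{m}{i}s^i=\sum_i m^{(i)}s^i/i!$ and $e^{s\lambda_{Z^c}(B)}=\sum_j \lambda_{Z^c}(B)^j s^j/j!$. All terms are nonnegative, so Tonelli lets me exchange expectation and summation. The left side is then $\sum_k \frac{s^k}{k!}\sum_i\binom{k}{i}\BE[\eta_Z(B)^{(i)}\lambda_{Z^c}(B)^{k-i}]$, which equals $\sum_k s^k\lambda(B)^k/k!<\infty$. Comparing coefficients of two convergent power series with nonnegative coefficients on $[0,\infty)$ gives \eqref{e:2.11a}. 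Note that this argument shows all the moments involved are finite whenever $\lambda(B)<\infty$.

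For \eqref{e:2.11b} with $B\in\cX_0$, use the identity $\BE[(1-s)^{\eta_Z(B)}e^{s\lambda_Z(B)}]=1$ derived above, and extend it to complex or negative $s$ by the same direct conditioning argument. Take $|s|$ small and expand
\begin{displaymath}
(1-s)^{\eta_Z(B)}e^{s\lambda_Z(B)}=\sum_k\frac{s^k}{k!}\sum_i\binom{k}{i}(-1)^i\eta_Z(B)^{(i)}\lambda_Z(B)^{k-i}.
\end{displaymath}
Here the exchange of expectation and summation must be justified by dominated convergence rather than Tonelli. The absolute series is bounded by $(1+|s|)^{\eta(B)}e^{|s|\lambda(B)}$, which is integrable because $\eta(B)$ is Poisson with finite mean. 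The expectation is therefore an analytic function of $s$ that is identically $1$. Hence every coefficient of $s^k$ with $k\ge1$ vanishes, which is \eqref{e:2.11b}.
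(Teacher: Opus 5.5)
Your proposal is correct and is essentially the paper's proof: you obtain a generating-function identity from the spatial Markov property, expand it as a power series and compare coefficients, and handle general $B$ by monotone convergence over $B\cap B_n$. The only deviation is cosmetic: you evaluate at $1+s$ with $s\ge 0$, rederiving the identity directly from \eqref{eMarkov2} so that Tonelli suffices, whereas the paper expands the alternating series in $r=1-e^{-t}\in[0,1)$ straight from \eqref{e:23.16b} and justifies the interchange by the exponential moments of $\eta(B)$; so the obstacle you flagged does not actually arise.
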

\begin{proof}
  Assume first that $B\in\cX_0$, so that $\lambda(B)<\infty$. Then
  $\lambda_Z(B)$, $\lambda_{Z^c}(B)$ are both finite and
  $\eta_Z(B)\le\eta(B)$ has finite exponential moments.  Taking
  $f:=t\I_B$ in \eqref{e:23.16b}, we obtain
  \begin{align}\label{e23.712}
    \BE\big[
      \exp[-t\eta_Z(B)-(1-e^{-t})\lambda_{Z^c}(B)]
    \big]
    =
    \exp[-(1-e^{-t})\lambda(B)].
  \end{align}
  Put $r:=1-e^{-t}$. Then
  \begin{displaymath}
    e^{-t\eta_Z(B)}
    =
    (1-r)^{\eta_Z(B)}
    =
    \sum_{m=0}^\infty
      \frac{(-1)^m r^m}{m!}\eta_Z(B)^{(m)}.
  \end{displaymath}
  Expanding also $e^{-r\lambda_{Z^c}(B)}$ and comparing coefficients
yields
  \begin{displaymath}
    \sum_{i=0}^k
    \binom{k}{i}
    \BE\big[\eta_Z(B)^{(i)}\lambda_{Z^c}(B)^{k-i}\big]
    =
    \lambda(B)^k.
  \end{displaymath}
  This proves \eqref{e:2.11a} for $B\in\cX_0$. The extension to
  arbitrary $B\in\cX$ follows by applying the identity to
  $B\cap B_n$, where $B_n\uparrow\BX$ is a localising sequence, and
  using monotone convergence.
  Finally, starting instead from \eqref{e:23.16a} with $f=t\I_B$ gives
  \begin{displaymath}
    \BE\big[(1-r)^{\eta_Z(B)}e^{r\lambda_Z(B)}\big]=1.
  \end{displaymath}
  Comparing coefficients gives, for $k\ge1$,
  \begin{displaymath}
    \sum_{i=0}^k
    (-1)^i\binom{k}{i}
    \BE\big[\eta_Z(B)^{(i)}\lambda_Z(B)^{k-i}\big]
    = 0. \qedhere
  \end{displaymath}
\end{proof}

Equation \eqref{e:2.11a} has been noticed in
\cite[Example~5.10]{LM23}, while equation \eqref{e:2.11b} comes from
\cite{Zuyev99}.  Next we extend the conclusion of Example~6.3 from
\cite{LM23} to Markov sets and provide its localized variant.

\begin{corollary}
  Suppose that $Z$ is a Markov set and let $B\in\cX_0$. Then
  \begin{align}\label{e:2.15}
    \lambda(B)^k
    &=\BE  \lambda_{Z^c}(B)^k+\BE \eta_Z(B)^{(k)}\\ 
    &\quad +\sum^{k-1}_{i=1}\binom{k}{i}
      \BE \int_{B^{k-i}}\lambda_B(Z(\eta+\delta_{\bx})^c)^i \,
      \I\big\{x_1,\ldots,x_{k-i}\in Z(\eta+\delta_{\bx})\big\}
      \,\lambda^{k-i}(d\bx).\notag      
  \end{align}
\end{corollary}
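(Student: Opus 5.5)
The plan is to start from identity \eqref{e:2.11a} of Corollary~\ref{c:23.11}, which holds for any Markov set and $B\in\cX_0$:
\begin{displaymath}
  \sum^k_{i=0}\binom{k}{i}\,\BE \big[\eta_Z(B)^{(i)}\lambda_{Z^c}(B)^{k-i}\big]=\lambda(B)^k .
\end{displaymath}
The term with $i=0$ is $\BE\lambda_{Z^c}(B)^k$ and the term with $i=k$ is $\BE\eta_Z(B)^{(k)}$; these are the first two summands on the right-hand side of \eqref{e:2.15}. Reindexing the remaining terms by $i\mapsto k-i$ and using $\binom{k}{k-i}=\binom{k}{i}$, it remains to show, for $1\le i\le k-1$ and $j:=k-i$, that
\begin{displaymath}
  \BE \big[\eta_Z(B)^{(j)}\lambda_{Z^c}(B)^{i}\big]
  =\BE \int_{B^{j}}\lambda_B(Z(\eta+\delta_{\bx})^c)^i \,
      \I\big\{x_1,\ldots,x_{j}\in Z(\eta+\delta_{\bx})\big\}\,\lambda^{j}(d\bx).
\end{displaymath}

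For this I would write the factorial moment as a sum over the factorial measure $\eta^{(j)}$, that is, over $j$-tuples of pairwise distinct points of $\eta$ (with multiplicities counted as distinct atoms). Since $\eta_Z(B)=\eta(B\cap Z(\eta))$, we get
\begin{displaymath}
  \eta_Z(B)^{(j)}\lambda_{Z^c}(B)^i=\int h(\bx,\eta)\,\eta^{(j)}(d\bx),
\end{displaymath}
where
\begin{displaymath}
  h(\bx,\mu):=\I\{\bx\in B^j\}\,\I\{x_1,\dots,x_j\in Z(\mu)\}\,\lambda_B(Z(\mu)^c)^i .
\end{displaymath}
Here $\lambda_{Z^c}(B)=\lambda(B\cap Z(\eta)^c)=\lambda_B(Z(\eta)^c)$. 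Graph measurability of $Z$ makes $h$ jointly measurable: the indicator part is immediate, and $\mu\mapsto\lambda_B(Z(\mu)^c)$ is measurable by Fubini applied to $\I\{x\notin Z(\mu)\}$. The function $h$ is nonnegative.

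Applying the multivariate Mecke equation for the Poisson process $\eta$, namely
\begin{displaymath}
  \BE\int h(\bx,\eta)\,\eta^{(j)}(d\bx)=\BE\int h(\bx,\eta+\delta_{\bx})\,\lambda^j(d\bx),
\end{displaymath}
then gives exactly the displayed integral term. Summing over $i$ yields \eqref{e:2.15}. All quantities are finite because $B\in\cX_0$, so $\lambda(B)<\infty$ and $\eta(B)$ has all moments; in any case nonnegativity suffices for Mecke.

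I expect no real obstacle. The only points needing care are the correct identification of $\eta_Z(B)^{(j)}$ as an integral against $\eta^{(j)}$ when $\eta$ has multiple points (the factorial measure handles this), and the bookkeeping of the binomial indices. The Markov property enters only through \eqref{e:2.11a}.
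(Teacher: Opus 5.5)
Your proposal is correct and matches the paper's proof: it also starts from \eqref{e:2.11a}, writes $\eta_Z(B)^{(k-i)}$ as an integral of $\I\{x_1,\ldots,x_{k-i}\in B\cap Z(\eta)\}$ against the factorial measure $\eta^{(k-i)}$, and applies the multivariate Mecke equation to the middle terms. Your version just spells out the reindexing, the explicit integrand $h$ (including the factor $\lambda_{Z^c}(B)^i$), and its measurability, which the paper leaves implicit.
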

\begin{proof} For each $i\in[k-1]$ we have that
  \begin{displaymath}
    \eta_Z(B)^{(k-i)}=\int\I\big\{x_1,\ldots,x_{k-i}\in B
    \cap Z(\eta)\big\}\,\eta^{(k-i)}(d\bx),
  \end{displaymath}
where $\eta^{(k-i)}$ denotes the $(k-i)$-th factorial measure
of $\eta$. Therefore
equation \eqref{e:2.15} follows from \eqref{e:2.11a}  and the multivariate
Mecke equation, see \cite[Theorem~4.4]{LastPenrose17}.
\end{proof}

\section{Further identities for Poisson hulls}
\label{s:furtherid}

It is difficult to extract from Theorem~\ref{t1} more
explicit information on the distribution of $(\eta_Z,\lambda_Z)$. In
this section we present some alternative relationships under
additional assumptions.  We assume that $\partial$ is a generator and
that
\begin{equation}
  \label{e:genfinite}
  \BP(\partial\eta(\BX)<\infty)=1.
\end{equation}
Recall that $\partial\eta=\eta_{[\eta]^c}$ as well the definition
\eqref{eq:hull-partial} of $[\mu]$ and that $\mu\mapsto [\mu]^c$ is a
stopping set, see Lemma~\ref{l:hullstopping}.  In accordance with
Section~\ref{sec:hull-generator}, we define random measures
$\lambda_{[\eta]}:=\lambda(\cdot\cap [\eta])$ and
$\lambda_{[\eta]^c}:=\lambda(\cdot\cap [\eta]^c)$, as well as as the
point process $\eta_{[\eta]}:=\eta(\cdot\cap [\eta])$.  By
Theorem~\ref{t:spatMarkov} we have the spatial Markov property
\begin{displaymath}
  \BP\big(\eta_{[\eta]} \in\cdot\mid \partial\eta\big)
  =\Pi_{\lambda_{[\eta]}}(\cdot),\quad \BP\text{-a.s.}
\end{displaymath}
Since $Z(\mu):=[\mu]^c$ is a stopping set and
$\eta(Z(\eta))=\partial\eta(\BX)$, assumption \eqref{e:genfinite}
allows us to apply Theorem~\ref{t:spatMarkov}. Hence
\begin{displaymath}
  \BP\big(\eta_{[\eta]} \in\cdot\mid \partial\eta\big)
  =
  \Pi_{\lambda_{[\eta]}}(\cdot),
  \quad \BP\text{-a.s.}
\end{displaymath}

We next show that Theorem~5 in \cite{BeReitz15} remains valid for
general hulls and yields a formula for the factorial moments of
$\partial\eta(B)$. The result is the Poisson counterpart of Theorem~3
from the recent paper \cite{Buchta23}.

\begin{theorem}
  \label{t:2.7}
  Let $B\in\cX_0$ and $k\in\N$. Then
  \begin{multline}
    \label{e:2.36}
    \BE \partial\eta(B)^{(k)}
    =\BE\lambda_{[\eta]^c}(B)^k\\ 
    +\sum_{j=1}^{k-1}\binom{k}{j}(-1)^j \,
      \BE \int_{B^{k-j}}\I\big\{x_1\notin[\eta],\ldots,x_{k-j}\notin[\eta]\big\}
      \lambda_B\big([\eta+\delta_{\bx}]\setminus[\eta]\big)^j \,\lambda^{k-j}(d\bx).
  \end{multline}
\end{theorem}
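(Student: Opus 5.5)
The plan is to express $\partial\eta(B)^{(k)}$ through the factorial measure $\eta^{(k)}$, apply the multivariate Mecke equation, and then run an inclusion–exclusion argument over which of the added points are generator points.

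\emph{Mecke step.} Since $\partial\eta=\eta_{[\eta]^c}$, we have
\begin{displaymath}
\partial\eta(B)^{(k)}=\int \I\{x_1,\dots,x_k\in B\setminus[\eta]\}\,\eta^{(k)}(d\bx).
\end{displaymath}
Assumption \eqref{e:genfinite} and $B\in\cX_0$ make all quantities finite. By the multivariate Mecke equation \cite[Theorem~4.4]{LastPenrose17},
\begin{displaymath}
\BE\,\partial\eta(B)^{(k)}=\BE\int_{B^k}\I\{x_1,\dots,x_k\notin[\eta+\delta_{\bx}]\}\,\lambda^k(d\bx).
\end{displaymath}
Here the point $x_i$ is regarded as a point of $\mu=\eta+\delta_{\bx}$. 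The generator axioms of Appendix~A give two facts that I will use. First, $x_i$ is a generator point of $\mu$ if and only if $x_i\notin[\mu-\delta_{x_i}]$. Second, the hull is monotone, and $[\nu+\delta_y]=[\nu]$ whenever $y\in[\nu]$.

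\emph{Reduction to points outside $[\eta]$.} Monotonicity shows that if some $x_i\in[\eta]$, then $x_i$ is not a generator point of $\eta+\delta_{\bx}$. So I may insert the indicator $\I\{x_1,\dots,x_k\notin[\eta]\}$ into the integrand without changing it. On this event, for $I\subset[k]:=\{1,\dots,k\}$, set
\begin{displaymath}
h(I):=\I\{x_j\in[\eta+\delta_{\bx_I}]\ \text{for all } j\notin I\},
\end{displaymath}
where $\bx_I:=(x_i)_{i\in I}$. The key combinatorial claim is that $h(I)=1$ exactly when the set of indices $i$ for which $x_i$ is a generator point of $\eta+\delta_{\bx}$ is contained in $I$.

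\emph{Proof of the claim.} For ``$\Rightarrow$'', add the points $x_j$, $j\notin I$, one at a time. Each lies in the current hull by monotonicity, so the hull stays $[\eta+\delta_{\bx_I}]$, and these points remain non-generators. For ``$\Leftarrow$'', if all generator points of $\eta+\delta_{\bx}$ lie in $\eta+\delta_{\bx_I}$, then the hull of $\eta+\delta_{\bx}$ equals $[\eta+\delta_{\bx_I}]$. This uses the fact that the hull is determined by the generator; it is the Appendix~A property that I expect to be the main technical point to check carefully. The non-generators $x_j$ then lie in this hull.

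\emph{Möbius inversion.} Möbius inversion on the Boolean lattice of subsets then gives
\begin{displaymath}
\I\{\text{all } x_i \text{ are generator points}\}=\sum_{I\subset[k]}(-1)^{k-|I|}h(I).
\end{displaymath}

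\emph{Integrating out.} Multiply by $\I\{x_1,\dots,x_k\notin[\eta]\}$. For $j\notin I$, the condition $x_j\in[\eta+\delta_{\bx_I}]$ together with $x_j\notin[\eta]$ says $x_j\in[\eta+\delta_{\bx_I}]\setminus[\eta]$. Integrating out the coordinates $x_j$, $j\notin I$, over $B$ with respect to $\lambda$ yields $\lambda_B([\eta+\delta_{\bx_I}]\setminus[\eta])^{k-|I|}$, multiplied by $\I\{x_i\notin[\eta],\,i\in I\}$. By symmetry in the coordinates, the terms depend only on $j:=k-|I|$, which produces the factor $\binom{k}{j}$ and the sign $(-1)^j$.

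\emph{Identifying the terms.} The term $j=0$ is $\BE\int_{B^k}\I\{x_i\notin[\eta]\ \forall i\}\,\lambda^k(d\bx)=\BE\lambda_{[\eta]^c}(B)^k$. The term $j=k$ ($I=\emptyset$) vanishes because $[\eta]\setminus[\eta]=\emptyset$. This leaves exactly \eqref{e:2.36}. Interchanging sums and expectations is justified since every term is bounded by $\lambda(B)^k<\infty$.
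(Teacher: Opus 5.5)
Your proposal is correct and is essentially the paper's proof: Mecke, inserting $\I\{x_i\notin[\eta]\}$ by monotonicity of the hull, then inclusion--exclusion (your Möbius inversion is indexed by the complement of the paper's $I$), and your ``key combinatorial claim'' is exactly the lemma from the online supplement to \cite{LM23} that the paper cites. The step you flagged follows directly from the axioms: if $\partial\mu\le\mu'\le\mu$, then (H3) gives $\partial\mu'=\partial\mu$, and (H4) then gives $\partial(\mu'+\delta_y)=\partial(\mu+\delta_y)$ for all $y$, i.e.\ $[\mu']=[\mu]$.
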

\begin{proof}
  We follow \cite{BeReitz15}.  By the multivariate Mecke equation,
  \begin{align*}
    \BE \partial\eta(B)^{(k)}
    &=\BE \int_{B^k}
    \I\{x_1\notin[\eta],\ldots,x_k\notin[\eta]\}
      \,\eta^{(k)}(d\bx) \\
    &= \BE \int_{B^k}
      \I\{x_1\notin[\eta+\delta_{\bx}],\ldots,
      x_k\notin[\eta+\delta_{\bx}]\}
      \,\lambda^k(d\bx).
  \end{align*}
  Since $[\eta]\subset[\eta+\delta_{\bx}]$, the last indicator equals
  \begin{displaymath}
    \I\{x_1\notin[\eta],\ldots,x_k\notin[\eta]\}
    \prod_{i=1}^k
    \I\{x_i\notin[\eta+\delta_{\bx}]\setminus[\eta]\}.
  \end{displaymath}
  Using inclusion--exclusion for the product gives
  \begin{displaymath}
    \BE \partial\eta(B)^{(k)}
    =\BE \int_{B^k}\I\big\{x_1\notin[\eta],\ldots,x_{k}\notin[\eta]\big\}
    \sum_{I\subset[k]}(-1)^{|I|}\I\big\{\text{$x_j\in [\eta+\delta_{\bx}]$
    for $j\in I$}\big\} \,\lambda^{k}(d\bx).
  \end{displaymath}
  By Lemma~1 in the online supplement to \cite{LM23}, for each
  $j\in[k]$ and each $\bx=(x_1,\ldots,x_k)\in\BX^k$, we have that
  $x_1,\ldots,x_j\in [\eta+\delta_{\bx}]$ if and only if
  $x_1,\ldots,x_j\in [\eta+\delta_{x_{j+1}}+\cdots+\delta_{x_k}]$.
  In particular, the term with $I=[k]$ vanishes.
  For $I=[k]$, the cited lemma gives
  $x_1,\ldots,x_k\in[\eta+\delta_{\bx}]$ if and only if
  $x_1,\ldots,x_k\in[\eta]$. This is incompatible with the factor
  $\I\{x_1\notin[\eta],\ldots,x_k\notin[\eta]\}$, and hence the
  corresponding term is zero.

  After relabelling the variables, a subset $I$ of cardinality $j$
  contributes the same amount as the subset $\{k-j+1,\ldots,k\}$.
  Integrating over these $j$ variables yields the factor
  \begin{displaymath}
    \lambda_B\big([\eta+\delta_{x_1}+\cdots+\delta_{x_{k-j}}]
    \setminus[\eta]\big)^j.
  \end{displaymath}
  Therefore, and by a
  symmetry argument,
  \begin{displaymath}
    \BE \partial\eta(B)^{(k)}
    =\BE \sum_{j=0}^{k-1}\binom{k}{j}(-1)^j
    \int_{B^{k-j}}\I\big\{x_1\notin[\eta],\ldots,x_{k-j}\notin[\eta]\big\}
    \lambda_B\big([\eta+\delta_{\bx}]\setminus[\eta]\big)^j\,\lambda^{k-j}(d\bx),
  \end{displaymath}
  and the result follows.
\end{proof}

In the special case $k=2$ equation \eqref{e:2.36} simplifies as follows. 

\begin{corollary}
  For $B\in\cX_0$ we have
  \begin{equation}
    \label{e:2.38}
    \BE \partial\eta(B)^{(2)}
    =\BE \lambda_{[\eta]^c}(B)^2
    -2 \,\BE \int_B \I\{x\notin[\eta]\}
    \lambda_B\big([\eta+\delta_x]\setminus[\eta]\big)\,\lambda(dx).
  \end{equation}
\end{corollary}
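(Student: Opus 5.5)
The plan is to specialize Theorem~\ref{t:2.7} to $k=2$ and then simplify the resulting expression. With $k=2$, the sum in \eqref{e:2.36} runs only over $j=1$. The binomial factor is $\binom{2}{1}=2$ and the sign is $(-1)^1=-1$. The integral is then over $B^{k-j}=B^1$, with a single variable $x_1=:x$, and $\delta_{\bx}=\delta_x$.

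The integrand becomes $\I\{x\notin[\eta]\}\,\lambda_B([\eta+\delta_x]\setminus[\eta])^1$. Substituting this gives exactly
\begin{displaymath}
  \BE \partial\eta(B)^{(2)}=\BE\lambda_{[\eta]^c}(B)^2
  -2\,\BE\int_B \I\{x\notin[\eta]\}\lambda_B\big([\eta+\delta_x]\setminus[\eta]\big)\,\lambda(dx),
\end{displaymath}
which is \eqref{e:2.38}.

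The only point to check is that all terms are finite, so that the subtraction is legitimate and no $\infty-\infty$ arises. Since $B\in\cX_0$, we have $\lambda(B)<\infty$. It follows that $\lambda_{[\eta]^c}(B)^2\le\lambda(B)^2$. The integral term is bounded by $\lambda(B)^2$. The left-hand side is bounded by $\BE\eta(B)^{(2)}=\lambda(B)^2$.

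There is no real obstacle here. The statement is a direct substitution into Theorem~\ref{t:2.7}, and the only care needed is with this finiteness bookkeeping.
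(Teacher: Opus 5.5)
Your proposal is correct and follows the paper exactly: the paper obtains this corollary by setting $k=2$ in Theorem~\ref{t:2.7}, so only the $j=1$ term survives, with coefficient $\binom{2}{1}(-1)^1=-2$. Your finiteness bookkeeping (all terms bounded by $\lambda(B)^2$ for $B\in\cX_0$) is a harmless extra check that the paper leaves implicit.
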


\begin{corollary}
  For $B\in\cX_0$ we have
  \begin{equation}
    \label{e:2.41}
    \Var \partial\eta(B)
    =\Var \lambda_{[\eta]^c}(B)+\BE \partial\eta(B) -2 \,\BE \int_B\I\{x\notin[\eta]\}
    \lambda_B\big([\eta+\delta_x]\setminus[\eta]\big)\,\lambda(dx).
  \end{equation}
\end{corollary}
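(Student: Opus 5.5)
The variance formula \eqref{e:2.41} follows by combining \eqref{e:2.38} with the first-moment (Efron) identity of Corollary~\ref{c2.5}. We start from
\[
\Var\partial\eta(B)=\BE\partial\eta(B)^{(2)}+\BE\partial\eta(B)-\big(\BE\partial\eta(B)\big)^2 ,
\]
which is valid because $\partial\eta(B)\le\eta(B)$ and $\lambda(B)<\infty$ for $B\in\cX_0$, so all moments are finite.

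\textbf{Step 1: apply \eqref{e:2.38}.} Substituting \eqref{e:2.38} for the second factorial moment gives
\[
\Var\partial\eta(B)=\BE\lambda_{[\eta]^c}(B)^2-2\,\BE\int_B\I\{x\notin[\eta]\}\lambda_B\big([\eta+\delta_x]\setminus[\eta]\big)\,\lambda(dx)+\BE\partial\eta(B)-\big(\BE\partial\eta(B)\big)^2 .
\]

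\textbf{Step 2: identify the squared mean.} It remains to show $\BE\partial\eta(B)=\BE\lambda_{[\eta]^c}(B)$. Then $\BE\lambda_{[\eta]^c}(B)^2-(\BE\lambda_{[\eta]^c}(B))^2=\Var\lambda_{[\eta]^c}(B)$, and the claim follows.

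For this we take $Z(\mu):=[\mu]^c$. By Lemma~\ref{l:hullstopping}, $Z$ is a stopping set, and it is $\eta$-finite by \eqref{e:genfinite}. Theorem~\ref{t:spatMarkov} then shows that \eqref{SMP} holds. In the stopping-set case $\lambda_Z$ is determined by $\eta_Z$ through the self-replicating property, so \eqref{SMP} is the same as \eqref{eMarkov2} and $Z$ is a Markov set. Corollary~\ref{c2.5} now gives
\[
\BE\eta_Z(B)=\BE\lambda_Z(B),
\]
that is, $\BE\partial\eta(B)=\BE\lambda_{[\eta]^c}(B)$, since $\partial\eta=\eta_{[\eta]^c}$.

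Alternatively, this equality is exactly the case $k=1$ of Theorem~\ref{t:2.7}: there the sum over $j$ is empty, so \eqref{e:2.36} reduces to $\BE\partial\eta(B)=\BE\lambda_{[\eta]^c}(B)$.

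No real obstacle arises. The only point needing care is the justification in Step 2 that the generator-induced $Z$ is a Markov set, and the $k=1$ case of Theorem~\ref{t:2.7} sidesteps it.
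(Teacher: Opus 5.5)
Your proof is correct and follows the paper's argument exactly: write $\Var\partial\eta(B)=\BE\partial\eta(B)^{(2)}+\BE\partial\eta(B)-(\BE\partial\eta(B))^2$, insert \eqref{e:2.38}, and use the Efron identity \eqref{e:Efrongeneral} to replace $(\BE\partial\eta(B))^2$ by $(\BE\lambda_{[\eta]^c}(B))^2$. Your extra justification that $Z(\mu)=[\mu]^c$ is a Markov set (self-replication makes $\lambda_Z$ a function of $\eta_Z$) and your alternative via the case $k=1$ of Theorem~\ref{t:2.7} are both valid, and they fill in details the paper leaves implicit.
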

\begin{proof}
  Since
  $\Var \partial\eta(B)=\BE \partial\eta(B)^{(2)}+\BE
  \partial\eta(B)-(\BE \partial\eta(B))^2$ and
  $\BE \partial\eta(B)=\BE \lambda_{[\eta]^c}(B)$ (see
  \eqref{e:Efrongeneral}), the result follows from \eqref{e:2.38}.
\end{proof}

Theorem \ref{t:2.7} can be generalized to obtain formulas for
mixed factorial moments of $\eta_{[\eta]}(B)$ and $\partial\eta(B)$.

\begin{theorem}
  \label{t:2.74}
  Let $B\in\cX_0$, $k\in\N$ and $l\in\N_0$. Then
  \begin{displaymath}
    \BE \eta_{[\eta]}(B)^{(l)}\partial\eta(B)^{(k)}
    =\sum_{j=0}^{k-1}\binom{k}{j}(-1)^j
    \BE\int_{B^{k-j}} \lambda_B([\eta+\delta_{\bx}])^l
    \lambda_B\big([\eta+\delta_{\bx}]\setminus[\eta]\big)^j
    \,(\lambda_{[\eta]^c})^{k-j}(d\bx).
  \end{displaymath}
\end{theorem}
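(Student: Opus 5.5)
Our plan is to follow the proof of Theorem~\ref{t:2.7}, with one extra conditioning step for the factor $\eta_{[\eta]}(B)^{(l)}$. That step will use the spatial Markov property for the hull.

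\textbf{Step 1: Mecke equation for the generator points.} Write
\[
\partial\eta(B)^{(k)}=\int_{B^k}\I\{x_1,\dots,x_k\notin[\eta]\}\,\eta^{(k)}(d\bx)
\]
and apply the multivariate Mecke equation. This gives
\[
\BE\, \eta_{[\eta]}(B)^{(l)}\partial\eta(B)^{(k)}
=\BE\int_{B^k} \eta_{[\eta+\delta_{\bx}]}(B)^{(l)}\,\I\{x_1,\dots,x_k\notin[\eta+\delta_{\bx}]\}\,\lambda^k(d\bx).
\]
Here $(\eta+\delta_{\bx})_{[\eta+\delta_{\bx}]}(B)=\eta_{[\eta+\delta_{\bx}]}(B)$ on this event, because the $x_i$ lie outside the hull. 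The product of indicators splits as $\I\{x_i\notin[\eta]\ \forall i\}\prod_i \I\{x_i\notin [\eta+\delta_{\bx}]\setminus[\eta]\}$, using monotonicity $[\eta]\subset[\eta+\delta_{\bx}]$. We then expand the product by inclusion--exclusion over $I\subset[k]$. By Lemma~1 of the online supplement to \cite{LM23}, the condition $x_i\in[\eta+\delta_{\bx}]$ for $i\in I$ is equivalent to $x_i\in[\eta+\delta_{\bx_{I^c}}]$ for $i\in I$. On that event, and given $x_i\notin[\eta]$, the hull $[\eta+\delta_{\bx}]$ equals $[\eta+\delta_{\bx_{I^c}}]$ (adding points inside a hull does not change the generator). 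The term $I=[k]$ vanishes as before. Integrating out the $j=|I|$ variables in $I$ then produces the factor $\lambda_B([\eta+\delta_{\bx}]\setminus[\eta])^j$, now multiplied by $\eta_{[\eta+\delta_{\bx}]}(B)^{(l)}$ with $\bx\in B^{k-j}$. Symmetry gives the binomial coefficient $\binom{k}{j}$.

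\textbf{Step 2: replace $\eta$-counts by $\lambda$-measures.} For fixed $\bx\in([\eta]^c)^{k-j}$ we need
\[
\BE\big[\eta_{[\eta+\delta_{\bx}]}(B)^{(l)}\, G(\eta)\big]=\BE\big[\lambda_B([\eta+\delta_{\bx}])^l G(\eta)\big],
\]
where $G$ collects the other factors. The intended tool is that $\mu\mapsto[\mu+\delta_{\bx}]^c$ is again a stopping set. The generator of $\mu+\delta_{\bx}$ restricted away from $\bx$ behaves like a generator, so Lemma~\ref{l:hullstopping} should apply. Conditionally on $\partial(\eta+\delta_{\bx})$, the process $\eta$ restricted to $[\eta+\delta_{\bx}]$ is then Poisson with intensity $\lambda_{[\eta+\delta_{\bx}]}$. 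Its $l$-th factorial moment of the $B$-count is $\lambda_B([\eta+\delta_{\bx}])^l$.

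\textbf{Main obstacle.} Step~2 requires every other factor to be measurable with respect to $\partial(\eta+\delta_{\bx})$. These factors are $\I\{x_i\notin[\eta]\}$, $\lambda_B([\eta+\delta_{\bx}]\setminus[\eta])$, and the indicator from Step~1. This fails for anything that involves $[\eta]$ itself.

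The route I would try first is to carry out the conditioning before the inclusion--exclusion, on the unexpanded expression from Step~1. There the only other factor is $\I\{x_1,\dots,x_k\notin[\eta+\delta_{\bx}]\}$, which is a function of $\partial(\eta+\delta_{\bx})$. This yields
\[
\BE\int_{B^k}\lambda_B([\eta+\delta_{\bx}])^l\,\I\{x_1,\dots,x_k\notin[\eta+\delta_{\bx}]\}\,\lambda^k(d\bx),
\]
and the expansion of Step~1 then proceeds with $\lambda_B([\eta+\delta_{\bx}])^l$ as a passive factor. On each term we use $[\eta+\delta_{\bx}]=[\eta+\delta_{\bx_{I^c}}]$ to bring it into the displayed form. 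Finally, we rewrite $\I\{x_i\notin[\eta]\}\lambda(dx_i)$ as $\lambda_{[\eta]^c}(dx_i)$.

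A measurability subtlety remains: a fixed $\bx$ is added to a random configuration. I would handle it with the stopping-set property of the map $\mu\mapsto Z(\mu+\delta_{\bx})$, jointly measurable in $\bx$, together with Fubini.
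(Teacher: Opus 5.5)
Your argument is correct, but you use the two main tools in the opposite order from the paper.

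\textbf{The paper's order.} The paper conditions first. Since $\partial\eta(B)^{(k)}$ is a function of $\partial\eta$, the spatial Markov property for the original stopping set $\mu\mapsto[\mu]^c$ gives at once
\[
\BE\,\eta_{[\eta]}(B)^{(l)}\partial\eta(B)^{(k)}=\BE\,\lambda_{[\eta]}(B)^l\,\partial\eta(B)^{(k)}.
\]
The multivariate Mecke equation, applied to the functional $\lambda_{[\eta]}(B)^l\,\I\{x_1,\dots,x_k\notin[\eta]\}$, then automatically produces $\lambda_B([\eta+\delta_{\bx}])^l\,\I\{x_1,\dots,x_k\notin[\eta+\delta_{\bx}]\}$.

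\textbf{Your order.} You apply Mecke first. You must then invoke the spatial Markov property separately for each fixed $\bx$, for the shifted map $Z_{\bx}(\mu):=[\mu+\delta_{\bx}]^c$.

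Both routes reach the same intermediate expression, and the inclusion--exclusion afterwards is identical. You are right to state explicitly that $[\eta+\delta_{\bx}]=[\eta+\delta_{\bx_{I^c}}]$ on each term; the paper leaves this implicit. The paper's ordering is more economical: it needs no shifted stopping sets, no conditioning pointwise in $\bx$, and no extra Fubini or measurability discussion.

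If you keep your route, tighten three points.
\begin{enumerate}
\item Replace the vague claim that the shifted generator ``behaves like a generator'' by a direct check, valid for any stopping set $Z$. Put $\nu:=\mu+\delta_{\bx}$ and $W:=Z(\nu)$. Then $\mu_W+\psi_{W^c}+\delta_{\bx}=\nu_W+(\psi+\delta_{\bx})_{W^c}$, so \eqref{eq:st-set-Z} for $Z$ immediately gives it for $Z_{\bx}$.
\item The $\eta$-finiteness of $Z_{\bx}$ follows from $\eta(Z(\eta+\delta_{\bx}))\le\eta(Z(\eta))=\partial\eta(\BX)$, using the monotonicity in Lemma~\ref{l:hullstopping}, together with \eqref{e:genfinite}.
\item For fixed $\bx$, the $\sigma$-fields generated by $\eta_{Z_{\bx}(\eta)}$ and by $\partial(\eta+\delta_{\bx})$ coincide, since both maps are self-replicating. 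So conditioning on $\partial(\eta+\delta_{\bx})$ is exactly what Theorem~\ref{t:spatMarkov} provides.
\end{enumerate}
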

\begin{proof}
  Applying the spatial Markov property and then the multivariate Mecke
  equation yields
  \begin{align*}
    \BE \eta_{[\eta]}(B)^{(l)}\partial\eta(B)^{(k)}
    &=\BE \lambda_{[\eta]}(B)^l \int_{B^k}
      \I\big\{x_1\notin[\eta],\ldots,x_{k}\notin[\eta]\big\}\,\eta^{(k)}(d\bx)\\
    &=\BE \int_{B^k}\lambda\big(B\cap[\eta+\delta_{\bx}]\big)^l
      \I\big\{x_1\notin[\eta+\delta_{\bx}],\ldots,x_{k}
      \notin[\eta+\delta_{\bx}]\big\}\,\lambda^{k}(d\bx).
  \end{align*}
  As in the proof of Theorem \ref{t:2.7} we can use here the
  inclusion--exclusion principle to obtain that the above equals 
  \begin{displaymath}
    \sum_{j=0}^{k-1}\binom{k}{j}(-1)^j
    \BE\int_{B^{k-j}}\I\big\{x_1\notin[\eta],\ldots,x_{k-j}\notin[\eta]\big\}
    \lambda_B\big([\eta+\delta_{\bx}]\big)^l
    \lambda_B\big([\eta+\delta_{\bx}]\setminus[\eta]\big)^j\,\lambda^{k-j}(d\bx).
  \end{displaymath}
  This concludes the proof.
\end{proof}

\section{Hulls with the prime property}
\label{sec:hulls-with-prime}

In this section we consider a generator $\partial$ 
satisfying the \emph{prime property}, that is,
\begin{equation}
  \label{e:prime}
  \I\big\{\partial(\mu+\delta_x)\ne\partial\mu\big\}
  =\prod_{y\in\mu}\I\big\{\partial(\delta_y+\delta_x)\ne \partial\delta_y\big\},
  \quad\mu\in\bN,\,x\in\BX, 
\end{equation}
where we note that, as a rule, $\partial\delta_y=\delta_y$. 
Examples can be found in \cite{LM23} and are also mentioned later on. 
Let us abbreviate
\begin{displaymath}
  H_x(\mu):=\I\big\{\partial(\mu+\delta_x)\ne\partial\mu\big\}
\end{displaymath}
and $\bH_x(\mu):=1-H_x(\mu)$.  If $\mu=\delta_y$, we write $H_x(y)$
and $\bH_x(y)$ for $H_x(\mu)$ and $\bH_x(\mu)$, respectively.  The
prime property is equivalent to
\begin{displaymath}
  \bH_x(\mu)=\max_{y\in\mu}\bH_x(y),\quad\mu\in\bN,\,x\in\BX,
\end{displaymath}
with the convention that the maximum over the empty set is $0$.

The moment generating function of 
$\lambda(B\cap[\eta]^c)$ can be expressed
as an infinite series of integrals involving only the
functions $H_x(y)$.  To shorten the statement, denote
\begin{displaymath}
  H_{\bx}(w):=H_{x_1}(w)\cdots H_{x_i}(w),
  \quad \bx=(x_1,\ldots,x_i)\in\BX^i,\,w\in\BX,\, i\in\N.
\end{displaymath}
For $i=0$ we set $H_{\bx}(w):=1$.

\begin{theorem}
  \label{t:mgfprime}
  Let $B\in\cX$ satisfy $\lambda(B)<\infty$ and let $t\in\R$.  Then
  \begin{displaymath}
    \BE e^{t\lambda_{[\eta]^c}(B)}
    =\sum^\infty_{i=0}\frac{ t^i}{i!}\int_{B^i} 
    \exp\Big[\int \big(H_{\bx}(w)-1 \big)\,\lambda(dw)\Big]\,\lambda^i(d\bx).
  \end{displaymath}
\end{theorem}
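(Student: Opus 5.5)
Expand the exponential in moments: since $0\le\lambda_{[\eta]^c}(B)\le\lambda(B)<\infty$, we have $|e^{t\lambda_{[\eta]^c}(B)}|\le e^{|t|\lambda(B)}$. Fubini/dominated convergence then gives
\begin{displaymath}
  \BE e^{t\lambda_{[\eta]^c}(B)}=\sum_{i=0}^\infty\frac{t^i}{i!}\,\BE\lambda_{[\eta]^c}(B)^i .
\end{displaymath}
So it suffices to show, for each $i\in\N_0$,
\begin{displaymath}
  \BE\lambda_{[\eta]^c}(B)^i=\int_{B^i}\exp\Big[\int\big(H_{\bx}(w)-1\big)\,\lambda(dw)\Big]\,\lambda^i(d\bx).
\end{displaymath}
For $i=0$ both sides are $1$, by the conventions $H_{\bx}\equiv1$ and $\lambda^0=\delta_\emptyset$. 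Interchanging the series with the $\lambda^i$-integrals on the right is justified because the integrand lies in $[0,1]$, so the $i$-th term is bounded by $|t|^i\lambda(B)^i/i!$.

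To prove the moment identity, write $\lambda_{[\eta]^c}(B)^i=\int_{B^i}\prod_{j=1}^i\I\{x_j\notin[\eta]\}\,\lambda^i(d\bx)$ and apply Fubini. This gives
\begin{displaymath}
  \BE\lambda_{[\eta]^c}(B)^i=\int_{B^i}\BP\big(x_1\notin[\eta],\ldots,x_i\notin[\eta]\big)\,\lambda^i(d\bx).
\end{displaymath}
By \eqref{eq:hull-partial}, $x\notin[\eta]$ means $\partial(\eta+\delta_x)\ne\partial\eta$, that is, $H_x(\eta)=1$. The prime property \eqref{e:prime} turns this into $H_x(\eta)=\prod_{y\in\eta}H_x(y)$. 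Hence
\begin{displaymath}
  \prod_{j=1}^i\I\{x_j\notin[\eta]\}=\prod_{y\in\eta}\prod_{j=1}^iH_{x_j}(y)=\prod_{y\in\eta}H_{\bx}(y)=\exp\Big[\int\log H_{\bx}(y)\,\eta(dy)\Big],
\end{displaymath}
where the product runs over the points of $\eta$ with multiplicity and $\log 0=-\infty$.

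The probability generating functional of the Poisson process, $\BE\prod_{y\in\eta}u(y)=\exp[-\int(1-u)\,d\lambda]$ for measurable $u\colon\BX\to[0,1]$ (valid also for $\{0,1\}$-valued $u$ and infinite $\eta$), then yields
\begin{displaymath}
  \BP\big(x_1,\ldots,x_i\notin[\eta]\big)=\exp\Big[\int\big(H_{\bx}(w)-1\big)\,\lambda(dw)\Big].
\end{displaymath}
This is exactly the integrand, with the value $0$ whenever $\lambda(\{H_{\bx}=0\})=\infty$. Equivalently, $\eta$ restricted to $\{w:H_{\bx}(w)=0\}$ is empty with probability $e^{-\lambda(\{H_{\bx}=0\})}$.

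The main obstacle is a technical one: applying the prime property to $\eta$, which may have infinitely many points. Since \eqref{e:prime} is assumed for all $\mu\in\bN$, the product over $y\in\mu$ is an infinite product of indicators and is meaningful. The remaining care lies in the measurability of $(x,y)\mapsto H_x(y)$, which follows from graph measurability of $Z$, and in the convention for multiple points, where the generator treats $\partial\delta_y$ consistently. No Markov property is needed, only the Poisson void probabilities.
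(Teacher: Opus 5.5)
Your proposal is correct and follows the paper's proof essentially step for step. Both arguments expand the exponential into moments by dominated convergence, using $\lambda_{[\eta]^c}(B)\le\lambda(B)<\infty$, and write $\BE\lambda_{[\eta]^c}(B)^i$ as the $\lambda^i$-integral of $\BE H_{x_1}(\eta)\cdots H_{x_i}(\eta)$. They then factorize this expectation over the points of $\eta$ via the prime property and evaluate it with the Poisson probability generating functional, which the paper takes from Exercise~3.6 of \cite{LastPenrose17}. Your remarks on multiplicities, on the value $0$ when $\lambda(\{H_{\bx}=0\})=\infty$, and on measurability are correct refinements of that same route.
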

\begin{proof}
  Let $i\in\N$. We have
  \begin{displaymath}
    \BE \lambda_{[\eta]^c}(B)^i=
    \BE \int \I \big\{x_1,\ldots,x_i\in B\cap [\eta]^c\big\}\,\lambda^i(d\bx)
    =\BE \int_{B^i} H_{x_1}(\eta)\cdots H_{x_i}(\eta)\,\lambda^i(d\bx).
  \end{displaymath}
  At this stage we are using the prime property, giving
  \begin{displaymath}
    \BE  H_{x_1}(\eta)\cdots H_{x_i}(\eta)
    =\BE \prod_{w\in\eta}H_{x_1}(w)\cdots H_{x_i}(w). 
  \end{displaymath}
  From \cite[Exercise~3.6]{LastPenrose17} we obtain
  \begin{displaymath}
    \BE  H_{x_1}(\eta)\cdots H_{x_i}(\eta)
    =\exp\Big[\int \big(H_{\bx}(w)-1\big)\,\lambda(dw)\Big].
  \end{displaymath}
  Since by dominated convergence,
  \begin{displaymath}
    \BE e^{t\lambda_{[\eta]^c}(B)}
    =\sum^\infty_{i=0}\frac{ t^i}{i!}\BE \lambda_{[\eta]^c}(B)^i,
  \end{displaymath} 
  the assertion follows.
\end{proof}

Since $\lambda_{[\eta]}(B)=\lambda(B)-\lambda_{[\eta]^c}(B)$, we obtain
from Theorem~\ref{t:mgfprime}
\begin{displaymath}
  \BE e^{t\lambda_{[\eta]}(B)}=e^{t\lambda(B)}
  \sum^\infty_{i=0}\frac{(-1)^i t^i}{i!}\int_{B^i} 
  \exp\Big[\int \big(H_{\bx}(w)-1 \big)\,\lambda(dw)\Big]
  \,\lambda^i(d\bx),\quad t\in\R.
\end{displaymath}
This determines the distribution of $\lambda_{[\eta]}(B)$.  Moreover,
if \eqref{e:genfinite} holds, then $\mu\mapsto [\mu]^c$ is a locally
finite stopping set and Theorem~\ref{t:jointtransfrom} yields a
formula for the joint transform of
$(\eta_{[\eta]}(B),\lambda_{[\eta]}(B))$ on the left-hand side of
\eqref{e:transform}.

The proof of Theorem~\ref{t:mgfprime} yields the following corollary.

\begin{corollary}
  Let $B\in\cX_0$. Then
  \begin{align}
    \label{e:3.6}
    \BE \lambda_{[\eta]^c}(B)
    &=\int_B\exp\Big[-\int \bH_x(w)\,\lambda(dw)\Big]\,\lambda(dx),\\
    \label{e:3.7}
    \BE \lambda_{[\eta]^c}(B)^2 
    &=\int_{B^2} \exp\Big[\int \big(H_{x}(w)H_{y}(w)-1 \big)
      \,\lambda(dw)\Big]\,\lambda^2(d(x,y)).
  \end{align}
\end{corollary}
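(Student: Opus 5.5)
The plan is to read off both formulas from the moment computation inside the proof of Theorem~\ref{t:mgfprime}, taking $i=1$ and $i=2$. That proof shows, for each $i\in\N$,
\begin{displaymath}
  \BE \lambda_{[\eta]^c}(B)^i=\int_{B^i}\BE\big[H_{x_1}(\eta)\cdots H_{x_i}(\eta)\big]\,\lambda^i(d\bx).
\end{displaymath}
The exchange of expectation and integral is justified by Fubini, since all integrands are indicators. The prime property then gives $H_{x_1}(\eta)\cdots H_{x_i}(\eta)=\prod_{w\in\eta}H_{\bx}(w)$. Because $0\le H_{\bx}\le 1$, the probability generating functional of the Poisson process \cite[Exercise~3.6]{LastPenrose17} yields
\begin{displaymath}
  \exp\Big[\int (H_{\bx}(w)-1)\,\lambda(dw)\Big].
\end{displaymath}
Since $B\in\cX_0$, we have $\lambda(B)<\infty$, so all moments involved are finite. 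This is not even needed, because the identities hold in $[0,\infty]$.

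For \eqref{e:3.6} I take $i=1$. In that case $H_{\bx}(w)-1=H_x(w)-1=-\bH_x(w)$, so the exponent becomes $-\int\bH_x(w)\,\lambda(dw)$.

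For \eqref{e:3.7} I take $i=2$ and write $\bx=(x,y)$, so that $H_{\bx}(w)=H_x(w)H_y(w)$. This gives the stated double integral over $B^2$ with respect to $\lambda^2$.

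The only point needing care is the use of the probability generating functional when $\int (1-H_{\bx})\,d\lambda=\infty$. In that case both sides should be read as $0$: the exponential is $e^{-\infty}=0$, and $\eta$ almost surely has infinitely many points $w$ with $H_{\bx}(w)=0$, so the product vanishes. The formula $\BE\prod_{w\in\eta}u(w)=\exp[-\int(1-u)\,d\lambda]$ for measurable $u\colon\BX\to[0,1]$ covers this case. I do not expect any further obstacle.
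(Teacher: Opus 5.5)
Your proposal is correct and follows the paper's route: the paper reads the corollary off the moment identity
$\BE\lambda_{[\eta]^c}(B)^i=\int_{B^i}\exp\big[\int(H_{\bx}(w)-1)\,\lambda(dw)\big]\,\lambda^i(d\bx)$
from the proof of Theorem~\ref{t:mgfprime}, taking $i=1$ and $i=2$. Your remark on the case $\lambda(\{H_{\bx}=0\})=\infty$ is also correct, and worth making: the inner integral runs over all of $\BX$ and can be infinite even though $\lambda(B)<\infty$.
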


As a result we obtain explicit integral formulas for the variances of
$\lambda_{[\eta]^c}(B)$ and $\partial\eta(B)$.

\begin{corollary}
  \label{c:variances}
  For all $B\in\cX_0$,
  \begin{align}\notag
    \Var\lambda_{[\eta]^c}(B)
    &=\int_{B^2} \exp\Big[\int \big(H_{x}(w)H_{y}(w)-1\big)
      \,\lambda(dw)\Big]\,\lambda^2(d(x,y))\\
    \label{e:var1}
    &\qquad\qquad -\int_{B^2} \exp\Big[\int -
      \big (\bH_{x}(w)+\bH_{y}(w)\big) \,\lambda(dw)\Big]\,\lambda^2(d(x,y)),\\ 
    \notag
    \Var \partial\eta(B)
    &=\Var \lambda_{[\eta]^c}(B)
      +\int_B \exp\Big[-\int \bH_{x}(w)\,\lambda(dw)\Big]\,\lambda(dx)\\
    \label{e:var2}
    &\qquad -2\int_{B^2} \bH_y(x)\exp\Big[\int \big(H_{x}(w)H_y(w)-1\big)
      \,\lambda(dw)\Big]\,\lambda^2(d(x,y)).
  \end{align}
\end{corollary}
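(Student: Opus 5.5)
The first formula \eqref{e:var1} follows directly from \eqref{e:3.6} and \eqref{e:3.7}. I would write $\Var\lambda_{[\eta]^c}(B)=\BE\lambda_{[\eta]^c}(B)^2-(\BE\lambda_{[\eta]^c}(B))^2$. The first term is given by \eqref{e:3.7}. For the square of \eqref{e:3.6}, I would use Fubini to write it as the double integral over $B^2$ of $\exp[-\int(\bH_x(w)+\bH_y(w))\,\lambda(dw)]$. The only point to check is finiteness: $\lambda(B)<\infty$ bounds both integrals, so the subtraction is legitimate.

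For \eqref{e:var2} I would start from \eqref{e:2.41}. By \eqref{e:Efrongeneral} and \eqref{e:3.6}, $\BE\partial\eta(B)=\BE\lambda_{[\eta]^c}(B)$ equals the single integral appearing there. The remaining task is to evaluate
\[
  \BE\int_B\I\{x\notin[\eta]\}\lambda_B([\eta+\delta_x]\setminus[\eta])\,\lambda(dx).
\]
I would expand $\lambda_B([\eta+\delta_x]\setminus[\eta])=\int_B\I\{y\in[\eta+\delta_x]\}\I\{y\notin[\eta]\}\,\lambda(dy)$ and apply Fubini. This reduces the problem to computing $\BE[H_x(\eta)H_y(\eta)\bH_y(\eta+\delta_x)]$, using that $x\notin[\eta]$ means $H_x(\eta)=1$ and $y\in[\eta+\delta_x]$ means $H_y(\eta+\delta_x)=0$.

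The prime property gives $\bH_y(\eta+\delta_x)=\max(\bH_y(\eta),\bH_y(x))$. On the event $H_y(\eta)=1$, i.e.\ $\bH_y(\eta)=0$, this maximum reduces to $\bH_y(x)$, which is deterministic. So the integrand becomes $\bH_y(x)H_x(\eta)H_y(\eta)$. Using the prime property again, $H_x(\eta)H_y(\eta)=\prod_{w\in\eta}H_x(w)H_y(w)$. The Poisson probability generating functional (Exercise~3.6 in \cite{LastPenrose17}, as in the proof of Theorem~\ref{t:mgfprime}) then yields
\[
  \exp\Big[\int\big(H_x(w)H_y(w)-1\big)\,\lambda(dw)\Big].
\]
Inserting this into \eqref{e:2.41} gives \eqref{e:var2}.

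The step needing care is the identity $\bH_y(\eta+\delta_x)=\max(\bH_y(\eta),\bH_y(x))$, in particular that it includes the convention $\partial\delta_x=\delta_x$ implicit in writing $H_y(x)$. I would read this identity directly off the max form of the prime property stated above, applied to $\mu=\eta+\delta_x$. Measurability and finiteness are routine because $B\in\cX_0$.
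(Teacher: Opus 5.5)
Your proposal is correct and follows the paper's proof essentially step by step. You get \eqref{e:var1} from \eqref{e:3.6} and \eqref{e:3.7}. You get \eqref{e:var2} from \eqref{e:2.41} by writing the correction term as $\int_{B^2}H_x(\eta)H_y(\eta)\bH_y(\eta+\delta_x)\,\lambda^2(d(x,y))$, reducing $\bH_y(\eta+\delta_x)$ to $\bH_y(x)$ via the prime property, and evaluating $\BE H_x(\eta)H_y(\eta)$ with the Poisson generating functional. Your caveat about the convention $\partial\delta_x=\delta_x$ is not needed: $\bH_y(x)$ is by definition $\bH_y(\delta_x)$, and the max form of the prime property, applied to $\eta+\delta_x$ and to $\eta$, gives $\bH_y(\eta+\delta_x)=\max(\bH_y(\eta),\bH_y(x))$ directly.
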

\begin{proof}
  Formula \eqref{e:var1} follows from \eqref{e:3.6} and \eqref{e:3.7}.
  To derive the second formula, we use \eqref{e:2.41}. To do so, we
  note that
  \begin{align*}
    \BE
    &\int_B\I\{x\notin[\eta]\}
      \lambda_B \big ([\eta+\delta_x]\setminus[\eta]\big)\,\lambda(dx)
    =\BE\int_{B^2} H_x(\eta)\bH_y(\eta+\delta_x)H_y(\eta)\,\lambda^2(d(x,y))\\
    &=\int_{B^2} \bH_y(x)\BE H_x(\eta) H_y(\eta)\,\lambda^2(d(x,y))\\
    &=\int_{B^2} \bH_y(x)\exp\Big[\int
      \big(H_x(w)H_{y}(w)-1\big)\,\lambda(dw)\Big]
      \,\lambda^2(d(x,y)).
  \end{align*}
  Furthermore, we obtain from \eqref{e:Efrongeneral} and \eqref{e:3.6}
  \begin{displaymath}
    \BE \partial\eta(B)=\BE \lambda_{[\eta]^c}(B)
    =\BE \int_B H_x(\eta)\,\lambda(dx)
    =\int_B \exp\Big[-\int \bH_{x}(w)\,\lambda(dw)\Big]\,\lambda(dx).
  \end{displaymath}
  Hence, \eqref{e:var2} follows from \eqref{e:3.7}.
\end{proof}

Next, we add a formula for the covariance between 
$\partial\eta(B)$ and $\lambda_{[\eta]^c}(B)$.

\begin{proposition}
  \label{p:cov}
  For all $B\in\cX_0$,
  \begin{align}
    \notag
    \Cov\big(\partial\eta(B),\lambda_{[\eta]^c}(B)\big)
    &=\int_{B^2} H_y(x)\exp\Big[\int \big(H_{x}(w)H_y(w)-1\big)
      \,\lambda(dw)\Big]\,\lambda^2(d(x,y))\\
    \label{e:cov}
    &\quad -\int_{B^2} \exp\Big[\int - \big (\bH_{x}(w)+\bH_{y}(w)\big)
      \,\lambda(dw)\Big]\,\lambda^2(d(x,y)).
  \end{align}
\end{proposition}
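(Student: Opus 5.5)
We write $\Cov(\partial\eta(B),\lambda_{[\eta]^c}(B))=\BE[\partial\eta(B)\lambda_{[\eta]^c}(B)]-\BE\partial\eta(B)\,\BE\lambda_{[\eta]^c}(B)$ and compute the two terms separately.

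For the product of means we use $\BE\partial\eta(B)=\BE\lambda_{[\eta]^c}(B)$ from \eqref{e:Efrongeneral}, together with \eqref{e:3.6}. This gives
\begin{displaymath}
\BE\partial\eta(B)\,\BE\lambda_{[\eta]^c}(B)=\int_{B^2}\exp\Big[-\int\big(\bH_x(w)+\bH_y(w)\big)\,\lambda(dw)\Big]\,\lambda^2(d(x,y)),
\end{displaymath}
which is exactly the subtracted term in \eqref{e:cov}.

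For the mixed moment we write
\begin{displaymath}
\partial\eta(B)=\int_B \I\{x\notin[\eta]\}\,\eta(dx)
\quad\text{and}\quad
\lambda_{[\eta]^c}(B)=\int_B H_y(\eta)\,\lambda(dy).
\end{displaymath}
The univariate Mecke equation then yields
\begin{displaymath}
\BE[\partial\eta(B)\lambda_{[\eta]^c}(B)]=\BE\int_{B^2} H_x(\eta)\,H_y(\eta+\delta_x)\,\lambda^2(d(x,y)).
\end{displaymath}
Here we used that $x\notin[\eta+\delta_x]$ is equivalent to $x\notin[\eta]$, i.e.\ to $H_x(\eta)=1$. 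This equivalence is the same fact from Lemma~1 of the online supplement to \cite{LM23} that was used in the proof of Theorem~\ref{t:2.7}.

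Next we apply the prime property \eqref{e:prime}. It gives $H_y(\eta+\delta_x)=H_y(\eta)H_y(x)$, since the product over the points of $\eta+\delta_x$ splits off the factor belonging to the extra point $x$. The integrand therefore becomes $H_y(x)\,H_x(\eta)H_y(\eta)$. By the prime property once more,
\begin{displaymath}
H_x(\eta)H_y(\eta)=\prod_{w\in\eta}H_x(w)H_y(w),
\end{displaymath}
and \cite[Exercise~3.6]{LastPenrose17} (the probability generating functional) gives
\begin{displaymath}
\BE\big[H_x(\eta)H_y(\eta)\big]=\exp\Big[\int\big(H_x(w)H_y(w)-1\big)\,\lambda(dw)\Big].
\end{displaymath}
This is the same computation as for \eqref{e:3.7}. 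Fubini, which is justified because $\lambda(B)<\infty$ and the integrands are bounded by $1$, then produces the first term of \eqref{e:cov}. Subtracting the product of means completes the proof.

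The only delicate step is the Mecke step. There one must check that the indicator $\I\{x\notin[\eta+\delta_x]\}$ reduces to $H_x(\eta)$, and that the added point $x$ contributes exactly the factor $H_y(x)$ in $H_y(\eta+\delta_x)$. Both follow from the definitions \eqref{eq:hull-partial} and \eqref{e:prime}, but they should be stated explicitly. This also explains why $H_y(x)$ appears here, in contrast to $\bH_y(x)$ in \eqref{e:var2}.
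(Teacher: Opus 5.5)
Your proof is correct and follows the paper's route: the Mecke equation, the prime-property factorization $H_y(\eta+\delta_x)=H_y(x)H_y(\eta)$, the probability generating functional, and $\BE\partial\eta(B)=\BE\lambda_{[\eta]^c}(B)$ together with \eqref{e:3.6} for the product of means. The only cosmetic difference is in the Mecke step: you get $H_x(\eta)$ from $\partial\eta=\eta_{[\eta]^c}$ plus the equivalence $x\in[\eta+\delta_x]\iff x\in[\eta]$, whereas the paper starts from the generator identity $\partial\mu(B)=\int_B H_x(\mu-\delta_x)\,\mu(dx)$, which encodes the same fact.
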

\begin{proof}
  We have
  \begin{displaymath}
    \BE\partial\eta(B)\lambda_{[\eta]^c}(B)
    =\BE  \lambda_{[\eta]^c}(B)\int _B H_x(\eta-\delta_x)\,\eta(dx).
  \end{displaymath}
  Here we have used the generator identity
  \begin{displaymath}
    \partial\mu(B)=\int_B H_x(\mu-\delta_x)\,\mu(dx),\quad \mu\in\bN;
  \end{displaymath}
  see \cite[Lemma~2.4 and eq.~(2.7)]{LM23}.
  By the Mecke equation we obtain
  \begin{displaymath}
    \BE\partial\eta(B)\lambda_{[\eta]^c}(B)
    =\BE\int_B  \lambda_{[\eta+\delta_x]^c}(B) H_x(\eta)\,\lambda(dx)
    =\BE\int_{B^2} H_y(\eta+\delta_x) H_x(\eta)\,\lambda^2(d(x,y)).
  \end{displaymath}
  Since $H_y(\eta+\delta_x)=H_y(x)H_y(\eta)$, this implies
  \begin{displaymath}
    \BE\partial\eta(B)\lambda_{[\eta]^c}(B)
    =\int_{B^2} H_y(x)\exp\Big[\int \big(H_{x}(w)H_y(w)-1\big)
    \,\lambda(dw)\Big]\,\lambda^2(d(x,y)).
  \end{displaymath}
  Finally, we can use $\BE \partial\eta(B)=\BE \lambda_{[\eta]^c}(B)$
  and \eqref{e:3.6} to conclude the proof.
\end{proof}

\begin{remark}
  Let $B\in\cX_0$. In the context of general hulls the random variable 
  \begin{displaymath}
    Y_B:=\lambda_{[\eta]}(B)+\partial\eta(B)
    =\lambda(B)+\partial\eta(B)-\lambda_{[\eta]^c}(B)
  \end{displaymath}
  was proposed in \cite{LM23} as an unbiased estimator of
  $\lambda(B)$. We have
  \begin{displaymath}
    \Var Y_B= \Var \partial\eta(B)
    +\Var \lambda_{[\eta]^c}(B)
    -2\Cov\big(\partial\eta(B),\lambda_{[\eta]^c}(B)\big).
  \end{displaymath}
  Inserting here the formulas from Corollary~\ref{c:variances} and
  \eqref{e:cov}, we arrive at the simple formula
  \begin{displaymath}
    \Var Y_B=\int_B\exp\Big[-\int \bH_x(w)\,\lambda(dw)\Big]\,\lambda(dx),
  \end{displaymath}
  in accordance with \cite[Theorem 5.1]{LM23}.
\end{remark}

\section{Convex hulls}
\label{section:convexhulls}

In this section we assume that $\BX=\R^d$ for some $d\in\N$.  Again we
take an increasing sequence $B_n\in\cB(\R^d)$, $n\in\N$, with union
$\R^d$ and define $\cB_0(\R^d)$ and $\bN\equiv\bN(\R^d)$ as in the
general case.  The \emph{convex hull} $\conv(A)$ of a set
$A\subset\R^d$ is the smallest convex set containing $A$.  If
$A=\emptyset$ then $\conv(A):=\emptyset$. For a counting measure $\mu$, we
denote by $\supp\mu$ the set of all $x$ such that $\mu(\{x\})\geq 1$
and write $x\in\mu$ if $x\in\supp\mu$. Given $\mu\in\bN$, we write
$\conv(\mu)$ for the convex hull of the support of $\mu$.

A \emph{vertex} of $\conv(\mu)$ is a point $x\in\mu$ such that
$x\notin \conv(\supp(\mu)\setminus\{x\})$. It can be shown that this
is the case if and only if $x$ is an \emph{extreme point} of
$\conv(\mu)$, that is, $x$ cannot be written as a convex
combination of two different points from $\conv(\mu)$. A vertex has to
be a point from $\supp(\mu)$. We denote by $\partial\mu$ the
restriction of $\mu$ to the set of vertices of $\conv(\mu)$ and write
\begin{equation}
  \label{eq:mu-conv}
  [\mu]:=\conv(\mu)\setminus \supp(\partial \mu).
\end{equation}
Note that $\mu_{[\mu]^c}=\partial \mu$.

We shall show that $\mu\mapsto [\mu]^c$ satisfies the assumptions of
Theorem~\ref{t:spatMarkov2} and so is Markov set. To this end we need
the following lemma, which is of independent interest. For
$K\in\cB(\R^d)$ we define $Z_K\colon \bN\to \cB(\R^d)$ by
\begin{align}
  Z_K(\mu):=[\mu_K]^c\cap K \cup \supp(\partial\mu_K),\quad \mu\in\bN.
\end{align}

\begin{lemma}\label{l:23.18}
  Let $K\in\cB(\R^d)$. Then $Z_K$ is graph measurable. If
  $K\in\cB_0(\R^d)$, then $Z_K$ is a stopping set.
\end{lemma}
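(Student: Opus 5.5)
For a counting measure $\mu$ and $K\in\cB(\R^d)$ the set $\supp(\partial\mu_K)$ consists of vertices of $\conv(\mu_K)$. These never belong to $[\mu_K]$, so $\supp(\partial\mu_K)\subset[\mu_K]^c\cap K$. Hence
\begin{displaymath}
  Z_K(\mu)=K\setminus[\mu_K]=K\setminus\big(\conv(\mu_K)\setminus\supp(\partial\mu_K)\big).
\end{displaymath}
I will treat the two assertions separately: graph measurability by an enumeration argument, and the stopping property by a direct verification of \eqref{eq:st-set-Z}.

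\emph{Graph measurability.} Since $\mu_K$ is a counting measure, I use a measurable enumeration of its atoms, as in \cite{LastPenrose17}. This gives measurable maps $\pi_j\colon\bN\to\R^d\cup\{\infty\}$, $j\in\N$, whose values are the points of $\mu_K$, with a cemetery value once the points are exhausted. By Carath\'eodory's theorem, $x\in\conv(\mu_K)$ if and only if there exist indices $j_0,\dots,j_d$ such that the points $\pi_{j_0}(\mu),\dots,\pi_{j_d}(\mu)$ are genuine points and $x$ lies in the simplex they span. For fixed indices the set of such $(x,\mu)$ is measurable, because it is the preimage of a closed subset of $(\R^d)^{d+2}$ under a measurable map. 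A countable union over index tuples is again measurable.

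Similarly, $x\in\supp(\partial\mu_K)$ if and only if $x=\pi_j(\mu)$ for some $j$ and $x$ is not a convex combination of $d+1$ points $\pi_{i}(\mu)$ all different from $x$. This is again a countable Boolean combination of measurable sets. Combining these with $\I\{x\in K\}$ shows that $(x,\mu)\mapsto\I\{x\in Z_K(\mu)\}$ is measurable.

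\emph{Stopping property.} Let $K\in\cB_0(\R^d)$, so that $\mu_K$ is finite, and fix $\mu,\psi\in\bN$. Write $Z:=Z_K(\mu)$ and $\nu:=\mu_Z+\psi_{Z^c}$. It suffices to show that $[\nu_K]=[\mu_K]$, because $Z_K$ depends on its argument only through $[\,\cdot_K]$. Since $K\cap Z=K\setminus[\mu_K]$, we get
\begin{displaymath}
  \nu_K=\mu_{K\setminus[\mu_K]}+\psi_{K\cap[\mu_K]}=\partial\mu_K+\psi_{[\mu_K]},
\end{displaymath}
where I used that the atoms of $\mu_K$ outside $[\mu_K]$ are exactly its vertices, counted with multiplicity, and that $[\mu_K]\subset K$ fails in general but $\psi_{K\cap[\mu_K]}$ is what appears. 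Because $\mu_K$ is finite, $\conv(\mu_K)$ is a polytope and equals the convex hull of its vertices, so $\conv(\partial\mu_K)=\conv(\mu_K)$. All added atoms of $\psi$ lie in $\conv(\mu_K)$, hence $\conv(\nu_K)=\conv(\mu_K)$.

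An extreme point of this polytope is still extreme, and any point of $\conv(\mu_K)$ that is not a vertex is not extreme. Therefore the vertices of $\conv(\nu_K)$ coincide with those of $\conv(\mu_K)$. Since $\psi_{[\mu_K]}$ puts no mass on them, the multiplicities agree as well, giving $\partial\nu_K=\partial\mu_K$. This yields $[\nu_K]=[\mu_K]$ and hence $Z_K(\nu)=Z_K(\mu)$.

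The main obstacle I expect is the measurability step. The subtle points are choosing the enumeration carefully when $\mu_K$ may be infinite, which can happen when $K\notin\cB_0(\R^d)$, and handling atoms with multiplicity in the vertex condition. The finiteness of $\mu_K$ for $K\in\cB_0(\R^d)$ is exactly what makes the polytope argument in the stopping-set step work.
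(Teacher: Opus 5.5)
Your proof is correct and follows the paper's route. Measurability comes from Carath\'eodory's theorem plus a measurable enumeration of atoms; your fixed-index-tuple closed-set preimage argument, followed by a countable union, is a cleaner version of the paper's infimum over rational coefficients. The stopping property comes from $\conv(\partial\mu_K)=\conv(\mu_K)$ for finite $\mu_K$ together with the added $\psi$-atoms lying in $[\mu_K]\subset\conv(\mu_K)$, and you also check explicitly that vertices and multiplicities are unchanged. The only blemish is the displayed identity $\nu_K=\partial\mu_K+\psi_{[\mu_K]}$, which should read $\psi_{K\cap[\mu_K]}$ as you note yourself; this does not affect the argument.
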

\begin{proof}
  Since $Z_K(\mu)=Z_K(\mu_K)$ and the mapping $\mu\mapsto \mu_K$ is
  measurable, it suffices to establish graph measurability in the case
  $K=\R^d$.  We first show that the mapping $\mu\mapsto \conv(\mu)$ is
  graph measurable.  By Carath\'eodory’s theorem
  \cite[Theorem~5.32]{MR2378491} we have $x\in\conv(\mu)$ if and only
  if there exist (not necessarily distinct)
  $x_1,\ldots,x_{d+1}\in \mu$ and $a_1,\ldots,a_{d+1}\ge 0$ such that
  $a_1+\cdots+a_{d+1}=1$ and $x=\sum^{d+1}_{k=1}a_k x_k$.  Hence
  $x\in\conv(\mu)$ if and only if the infimum of
  \begin{align*}
    \Bigg\|x-\sum^{d+1}_{k=1}a_k x_k\bigg\|
  \end{align*}
  goes to zero, where the infimum is taken over $x_1,\ldots,x_{d+1}$
  and $a_1,\ldots,a_{d+1}$ as specified above. (As usual we set
  $\inf\emptyset:=\infty$.)  By continuity we can restrict here to
  rational numbers $a_1,\ldots,a_{d+1}\ge 0$. Moreover, by
  Proposition~6.3 from \cite{LastPenrose17}
  and definition of $\bN$ the points $x_1,\ldots,x_{d+1}$
  can be found in a measurable way. It follows that the mapping
  $(x,\mu)\mapsto \I\{x\in \conv(\mu)\}$ is measurable.

  Next we show that $\mu\mapsto \supp(\partial\mu)$ is graph
  measurable.  By definition, we have $x\in \supp(\partial\mu)$ if and
  only if $x\in\mu$ and $x\notin
  \conv(\supp\mu\setminus\{x\})$. Similarly as above, it follows that
  the mappings $(x,\mu)\mapsto\I\{x\in\mu\}$ and
  $(x,\mu)\mapsto \I\{x\in \conv(\supp\mu\setminus\{x\})\}$ are
  measurable, implying the asserted measurability.
  
  Let us now assume that $K\in\cB_0(\R^d)$. 
  To prove \eqref{eq:st-set-Z}, we take $\mu,\psi\in\bN$ such that
  $\psi(Z_K(\mu))=0$. Since $\mu_{Z(\mu)}=\partial \mu_K$, we need to show that
  \begin{align*}
    Z_K(\mu)=Z_K(\partial\mu_K+\psi_K).
  \end{align*}
  By definition of $Z_K$ this requires to check
  \begin{align}\label{e:23.41}
    \conv(\mu_K)=\conv(\partial\mu_K+\psi_K).
  \end{align}
  Since $K\in\cB_0(\R^d)$ we have $\mu(K)<\infty$ and therefore
  $\conv(\mu_K)=\conv(\partial\mu_K)$. Let $x\in\psi_K$. Since
  $\psi(Z_K(\mu))=0$ we obtain $x\in[\mu_K]$ and since
  \begin{align*}
    [\mu_K] \subset \conv(\mu_K)=\conv(\partial\mu_K),
  \end{align*}
  we finally conclude
  $\conv(\partial\mu_K+\psi_K)=\conv(\partial\mu_K)=\conv(\mu_K)$.
\end{proof}

\begin{theorem}
  \label{t:23.16}
  We have for each $A\in\mathcal{N}$ that
  \begin{align}\label{eMarkov3}
    \BP(\eta_{[\eta]}\in A\mid \partial \eta,\lambda_{[\eta]})
    =\Pi_{\lambda_{[\eta]}}(A),\quad \BP\text{-a.s.}
  \end{align}
\end{theorem}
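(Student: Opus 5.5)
We will deduce \eqref{eMarkov3} from Theorem~\ref{t:spatMarkov2}, applied with $Z(\mu):=[\mu]^c$ and the approximating sets $Z_n:=Z_{B_n}$ of Lemma~\ref{l:23.18}, where $B_n\in\cB_0(\R^d)$ is the localising sequence increasing to $\R^d$. Since $\eta_{Z(\eta)}=\eta_{[\eta]^c}=\partial\eta$ and $\lambda_{Z(\eta)^c}=\lambda_{[\eta]}$, the Markov property \eqref{eMarkov2} for this $Z$ reads $\BP(\eta_{[\eta]}\in\cdot\mid \partial\eta,\lambda_{[\eta]^c})=\Pi_{\lambda_{[\eta]}}$. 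Conditioning on $\lambda_{[\eta]^c}$ or on $\lambda_{[\eta]}=\lambda-\lambda_{[\eta]^c}$ generates the same $\sigma$-field when $\lambda$ is locally finite, which one checks on $B\in\cB_0(\R^d)$ and extends by a monotone class argument. So it remains to verify the hypotheses of Theorem~\ref{t:spatMarkov2}.

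\emph{Step 1: stopping sets and finiteness.} By Lemma~\ref{l:23.18}, each $Z_n$ is a stopping set. Since $Z_n(\mu)=Z_n(\mu_{B_n})$, we have $\eta(Z_n(\eta))\le\eta(B_n)+\partial\eta_{B_n}(\R^d)\le 2\eta(B_n)<\infty$ almost surely, so each $Z_n$ is $\eta$-finite. Graph measurability of $Z$ itself follows from the first part of Lemma~\ref{l:23.18} with $K=\R^d$.

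\emph{Step 2: pointwise convergence \eqref{approxstopping}.} Fix $\mu\in\bN$ and $x\in\R^d$. Write $\mu_n:=\mu_{B_n}$, which has finite support increasing to $\supp\mu$. The claim is that $x\in[\mu_n]\cap B_n$ eventually if and only if $x\in[\mu]$.

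Suppose $x\in[\mu]=\conv(\mu)\setminus\supp\partial\mu$. By Carath\'eodory, $x\in\conv$ of finitely many points of $\mu$, and these lie in $\mu_n$ for large $n$. If $x$ is not a point of $\mu$, then $x\in\conv(\mu_n)$ and $x\notin\supp\mu_n$, so $x\in[\mu_n]$. If $x\in\mu$ but $x$ is not a vertex, then $x\in\conv(\supp\mu\setminus\{x\})$, and again finitely many points witness this, so $x$ is not a vertex of $\conv(\mu_n)$ for large $n$. In both cases $x\in B_n$ eventually.

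Conversely, suppose $x\notin[\mu]$. Either $x\notin\conv(\mu)\supseteq\conv(\mu_n)$, or $x$ is a vertex of $\conv(\mu)$, hence of every $\conv(\mu_n)$ containing it. In both cases $x\notin[\mu_n]$ for all $n$.

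Hence $\I\{x\in Z_n(\mu)\}\to\I\{x\in Z(\mu)\}$. The convergence is eventual stabilisation and need not be monotone, but Theorem~\ref{t:spatMarkov2} does not require monotonicity.

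\emph{Main obstacle.} The delicate point is Step 2 for infinite $\mu$, specifically making sure the definitions of vertex and hull for non-finite $\mu$ (where $\conv(\mu)$ need not be closed) match the finite-approximation characterisation. Carath\'eodory's theorem is what guarantees that membership in the convex hull, and non-extremality, are witnessed by finitely many points. With the hypotheses verified, Theorem~\ref{t:spatMarkov2} gives \eqref{eMarkov3}.
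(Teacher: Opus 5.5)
Your proposal is correct and follows the paper's proof. Both apply Theorem~\ref{t:spatMarkov2} with the stopping sets $Z_{B_n}$ of Lemma~\ref{l:23.18}, and verify the pointwise convergence to $[\mu]^c$ via Carath\'eodory's theorem and the fact that vertices of $\conv(\mu)$ remain vertices of $\conv(\mu_{B_n})$. You are somewhat more careful than the paper on three points: you check $\eta$-finiteness of the $Z_{B_n}$, you separate the cases $x\in\supp\mu$ and $x\notin\supp\mu$, and you observe that conditioning on $\lambda_{[\eta]}$ or on $\lambda_{[\eta]^c}=\lambda_{Z(\eta)}$ (as in \eqref{eMarkov2}) gives the same $\sigma$-field.
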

\begin{proof}
  In view of Lemma~\ref{l:23.18} the result follows from 
  Theorem~\ref{t:spatMarkov2}, once we have shown that
  \begin{align}\label{e:23.97}
    \lim_{n\to\infty}\I\{x\notin [\mu_{B_n}]\}
    =\I\{x\notin [\mu]\},\quad (x,\mu)\in\R^d\times\bN.
  \end{align}
  Take $x\in\R^d$ and $\mu\in\bN$ and recall that $[\mu]^c$ is the
  disjoint union of $\conv(\mu)^c$ and the vertices of $\conv(\mu)$.
  If $x\notin\conv(\mu)$, then $x\notin\conv(\mu_{W_n})$ for each
  $n\in\N$. Assume $x$ is a vertex of $\conv(\mu)$ and take $n\in\N$
  such that $x\in W_n$.  Since $\conv(\mu_{W_n})\subset \conv(\mu)$ it
  clearly follows that $x$ is a vertex (extreme point) of
  $\conv(\mu_{W_n})$. Hence, if $x\notin [\mu]$, then \eqref{e:23.97}
  holds.  If $x\in [\mu]$, then $x\in\conv(\mu)$ but is not a
  vertex. By Carath\'eodory’s theorem, $x$ is the convex combination of
  distinct points from $\supp\mu$.  For all sufficiently large
  $n\in\N$ these points are in $\supp\mu_{W_n}$, so that
  $x\in \conv(\mu_{W_n})$ but is no vertex of $\conv(\mu_{W_n})$.
  Hence \eqref{e:23.97} holds in this case as well.
\end{proof}

If $\conv(\eta)$ is almost surely closed and does not
contain any line, then we can show that $\lambda_{[\eta]}$
is almost surely determined by $\partial\eta$. Hence
$\lambda_{[\eta]}$ can be dropped from the condition on the left-hand
side of \eqref{eMarkov3}, and we obtain
\begin{align}
  \label{eMarkov4}
  \BP(\eta_{[\eta]}\in A\mid \partial \eta)
  =\Pi_{\lambda_{[\eta]}}(A),\quad \BP\text{-a.s.}
\end{align}
Since the result is of independent interest, we state it as a theorem.

\begin{theorem}
  \label{thr:lambda-on-Z}
  Assume that $\conv(\eta)$ is almost surely closed and does not
  contain any line. Then the random measure $\lambda_{[\eta]}$ is
  almost surely determined by $\partial \eta$.
\end{theorem}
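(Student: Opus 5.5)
The plan is to show that, on the event where $\conv(\eta)$ is closed and contains no line, the set $\conv(\eta)$ is a deterministic function of $\supp(\partial\eta)$. Then $[\eta]=\conv(\eta)\setminus\supp(\partial\eta)$ is also such a function, and so is $\lambda_{[\eta]}=\lambda(\cdot\cap[\eta])$. The key geometric tool is the Klee--Minkowski (Krein--Milman type) theorem for closed convex sets without lines: such a set $C$ satisfies
\begin{displaymath}
  C=\conv(\mathrm{ext}\,C)+\rec(C),
\end{displaymath}
where $\mathrm{ext}\,C$ is the set of extreme points of $C$ and $\rec(C)$ is its recession cone.

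First, I identify the extreme points. If $C=\conv(\eta)$ is closed, every extreme point of $C$ must belong to $\supp\eta$. Otherwise it would lie in the convex hull of finitely many other points of $\supp\eta$ by Carath\'eodory's theorem, which an extreme point cannot. So $\mathrm{ext}\,C=\supp(\partial\eta)$.

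Next, I recover the recession cone from the vertices alone. I do not expect $\rec(C)$ to be determined by the vertices pathwise in full generality, so I will argue almost surely instead. The idea is to use the Poisson structure via Theorem~\ref{t:23.16}: conditionally on $(\partial\eta,\lambda_{[\eta]})$, the process $\eta_{[\eta]}$ is Poisson with intensity $\lambda_{[\eta]}$. The directions of $\rec(C)$ must then be generated by unbounded sequences of points of $\eta$. I then aim for a deterministic description,
\begin{displaymath}
  C=\overline{\conv}\big(\supp\partial\eta\big),
\end{displaymath}
taking the closed convex hull of the vertex set. This should work because points of $\eta$ in $[\eta]$ lie in $C$, and the recession directions arise as limits of normalized vertices along unbounded sequences. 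To justify this, take $z\in C$ and a direction $u\in\rec(C)$. Writing $z+tu$ via Klee's decomposition requires only that extreme rays are approximated by vertices. Here I would use that the faces of a closed line-free set are exposed limits, together with the fact that unbounded faces of $\conv(\eta)$ contain infinitely many points of $\eta$, among them vertices of lower-dimensional faces.

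The main obstacle is exactly this step: showing that $\rec(C)$, or equivalently $C$ itself, is the closed convex hull of the vertices. A closed line-free set may have extreme rays containing no vertex at infinity. One example is a half-line face emanating from a single vertex, where all other points of $\eta$ on that ray lie in $[\eta]$.

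I would first try an induction on dimension over faces. Let $F$ be an unbounded face. Then $F\cap\supp\eta$ is infinite, and $F=\conv(\eta_F)$ is again closed and line-free, and is generated in $F$ by its own vertices. These vertices are vertices of $C$ unless $F$ is a ray. For rays I would use the almost sure argument: the vertex at the base of the ray together with the closedness forces the direction to be attained as a limit of directions to other vertices on adjacent faces. If that fails, I would fall back on showing that $\lambda$ restricted to such exceptional rays is a.s.\ zero, so that $\lambda_{[\eta]}$ is unaffected. Once $C$ is expressed as a measurable function of $\partial\eta$, graph measurability as in Lemma~\ref{l:23.18} gives the measurability of $\lambda_{[\eta]}$ with respect to $\sigma(\partial\eta)$.
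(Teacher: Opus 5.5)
\textbf{Gap: recovering the recession cone from the vertices.} Two of your steps are correct. The extreme points of $\conv(\eta)$ are exactly $\supp\partial\eta$, and a closed line-free convex set satisfies $C=\conv(\mathrm{ext}\,C)+\rec(C)$. The gap is the step you flag as the main obstacle, recovering $\rec(C)$ from the vertices, and none of your proposed routes works. The identity $C=\overline{\conv}(\supp\partial\eta)$ is false, and so is the fallback that $\lambda$ vanishes on the exceptional rays.

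For a counterexample, take $d=1$ and $\lambda$ Lebesgue measure on $[0,\infty)$. Almost surely $\conv(\eta)=[x_1,\infty)$, where $x_1$ is the smallest point of $\eta$. This set is closed and contains no line, so the theorem applies. But $\supp\partial\eta=\{x_1\}$, whose closed convex hull is $\{x_1\}$. There are no adjacent faces whose vertices could produce the direction $+1$. Moreover $\lambda_{[\eta]}$ is Lebesgue measure on $(x_1,\infty)$, which is far from zero. The same happens in any dimension for an unbounded edge emanating from a single vertex whose relative interior carries positive $\lambda$-mass. Invoking Theorem~\ref{t:23.16} does not help either: it conditions on $\lambda_{[\eta]}$ itself, so it cannot show that $\lambda_{[\eta]}$ is $\sigma(\partial\eta)$-measurable.

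\textbf{Missing idea: the recession cone is almost surely deterministic.} You never need to read $\rec(C)$ off from $\partial\eta$. For every $R>0$, the closed convex hull of $\eta_{B_R^c}+\delta_0$ has a support function that is finite in exactly the same directions as that of $\overline{\conv(\eta)}$, because the points in $B_R$ are bounded. Hence both sets have the same barrier cone, and therefore the same recession cone, which is its polar. Thus $\rec(\overline{\conv(\eta)})$ is measurable with respect to the tail $\sigma$-field $\bigcap_R\sigma(\eta_{B_R^c})$. This $\sigma$-field is trivial by Kolmogorov's zero--one law, applied to the independent restrictions of $\eta$ to disjoint annuli. So $\rec(\overline{\conv(\eta)})=C_\lambda$ a.s.\ for a deterministic closed cone $C_\lambda$, which is pointed by assumption.

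Combined with your first two steps, this gives $\conv(\eta)=\conv(\supp\partial\eta)+C_\lambda$ a.s. Hence $[\eta]$, and therefore $\lambda_{[\eta]}$, is a measurable function of $\partial\eta$. This is exactly the paper's argument; it needs no conditional Poisson structure, no induction over faces, and no approximation of extreme rays by vertices.
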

\begin{proof}
  It is shown in the proof of Lemma~\ref{l:23.18} that the map
  $\mu\mapsto[\mu]$ is graph measurable. Thus, the statement would
  follow if we show that $[\eta]$ is uniquely determined by
  $\partial\eta$. If $\lambda$ is finite, this follows from the fact
  that $\eta$ is finite and so the convex hull of $\eta$ is compact. 

  Assume that $\mu$ is an infinite counting measure on $\R^d$. By 
  assumption, $P=\conv(\eta)$ is closed.  It is shown in Appendix~C
  that the recession cone $C_\lambda$ of $P$ is deterministic. By the
  assumption, $C_\lambda$ is pointed, that is, it does not contain any
  line. By the Krein--Milman theorem (see
  \cite[Theorem~18.5]{MR274683}), $P$ equals the closed sum of the
  extremal points of $P$ and $C_\lambda$. Therefore, $P$ is a function
  of $\partial\eta$. By \eqref{eq:mu-conv}, $[\eta]$ and the
  restriction of $\lambda$ onto it are functions of $\partial\eta$.
\end{proof}  

\begin{example}[Vertices and boundary points of convex hulls of finite
  sets]
  \label{ex:chull}
  Let $\mu$ be a finite counting measure on $\BX=\R^d$. Then
  $\conv(\mu)$ is a compact set. Fix a $k\in\{0,1,\dots,d-1\}$, and
  let $[\mu]_k$ be the union of the relative interiors of all its
  faces of dimension at least $k+1$, in particular, the interior of
  $\conv(\mu)$. Then $Z(\mu)=[\mu]_k^c$ is a stopping set and the
  restriction of $\mu$ onto $Z(\mu)$ is a generator denoted by
  $\partial_k\mu$.

  The case $k=0$ appeared as Example~2.1 in \cite{LM23}. Then
  $\partial\mu$ consists of the points from $\supp\mu$ which are
  vertices of $\conv(\mu)$ (accounting for possible multiplicities)
  and $[\mu]_0$ is $\conv(\mu)$ with the vertices removed.  Then our
  identities extend those from \cite{BeReitz15} by including the case
  of non-diffuse intensity measures.  If $k=d-1$, then
  $\partial_{d-1}\mu$ is the set of all points from $\mu$ which lie on
  the boundary of $\conv(\mu)$.  If the intensity measure $\lambda$ of
  a finite Poisson process $\eta$ is absolutely continuous with
  respect to Lebesgue measure, then, almost surely, no point of $\eta$
  lies in the relative interior of any face of the convex hull of
  dimension at most $d-1$. In this case the cardinalities of
  $\partial_k\eta$ are almost surely equal for all $k=0,\dots,d-1$.
  
  New effects arise for non-diffuse intensity measures. For instance,
  consider the space $\R^2$, and let $\lambda$ be the one-dimensional
  Hausdorff measure on the boundary of $[0,1]^2$. Then
  $\lambda([\eta]_0)>0$ if and only if at least two points of $\eta$
  belong to the same segment on the boundary of the square, and
  $\eta([\eta]_0)$ is the number of points in the inner parts of the
  segments possibly formed on the sides. In this case
  $\partial_1\eta$ coincides with $\eta$. 
\end{example}

\begin{example}
  \label{ex:pole-zero}
  Let $\lambda$ be the measure on $\R^d$ with $\lambda(\{0\})=0$ and
  Lebesgue density $x\mapsto \|x\|^{-\alpha-d}$ on
  $\R^d\setminus\{0\}$, where $\alpha>0$ is some given parameter. Then
  $\lambda(B(0,\varepsilon))=\infty$ for each $\varepsilon>0$, while
  the complement of each such ball has finite mass.  We may choose
  $B_n:=\{0\}\cup B(0,n^{-1})^c$, $n\in\N$, to fit this example to our
  general setting.  It can be shown that $\partial\eta$ is a finite
  point process and, as a consequence, that $\conv\eta$ is the convex
  hull of $\supp(\partial\eta)$. 
  For every cone $C$ with vertex $0$ and positive angular measure,
  $\lambda(C\cap B(0,\varepsilon))=\infty$.  Hence, almost surely
  $\eta$ has points arbitrarily close to $0$ in every such
  cone. Choosing $d+1$ small cones whose directions positively span
  $\R^d$, one obtains a set $F$ of $d+1$ Poisson points arbitrarily
  close to $0$ whose convex hull $P$ contains $0$ in its interior.
  Outside $P$ there are only finitely many Poisson points.  Hence
  $\conv(\eta)=\conv((\supp\eta\cap P^c)\cup P)$ is a convex
  polytope. This fact also appears as Corollary~4.2 in
  \cite{kab:mar:tem:19}.  Thus, our results apply. Since $P$ has
  infinite measure, a localization with $B\in\cX_0$ is necessary for
  our results to hold.
\end{example}

\begin{example}[Failure of the stopping set property]
  \label{ex:chull-problem}
  Letting $Z(\mu)$ be the complement to $\conv(\mu)$ together with the
  extremal points of the convex hull does not necessarily result in a
  stopping set if $\mu$ is not finite. This phenomenon was missed in
  \cite{LM23} when discussing convex hulls examples. Indeed, let $\mu$
  be the counting measure on $\BX=\R$ which attaches unit mass to each
  of the natural numbers. Then $Z(\mu)=(-\infty,1]$ and
  $\partial\mu=\mu_Z=\delta_1$. However, letting $\psi$ be any finite
  counting measure on $Z(\mu)^c=(1,\infty)$, say $\psi=\delta_x$ with
  $x>1$, yields that $Z(\mu_Z+\delta_x)=(-\infty,1]\cup[x,\infty)$
  which differs from $Z(\mu)$. Correspondingly, the generator
  $\partial\mu=\delta_1$ does not satisfy property (H3).  The observed
  effect is explained by the fact that the infinite measure on the
  complement of $Z$ is replaced by a finite one. This example also
  shows that $Z$ is not a stopping set.
\end{example}

\begin{example}[Convex hulls derived from infinite intensity measures]
  Let $\lambda$ be the Lebesgue measure restricted to the set
  $\{(x,y):y\geq x^2\}$ on the plane $\BX=\R^2$. In this case, the
  convex hull $P$ of $\supp\eta$ is closed, does not contain any line,
  and the set $\partial\eta$ of extremal points of $P$
  is an infinite point process. Our Efron-type formulas
  apply once we restrict to a bounded set $B$.

  Consider $\lambda$ from Example~\ref{ex:pole-zero}, however,
  restricted onto the upper half-plane. Then $P=\conv(\eta)$ is not
  closed -- it has infinitely many vertices near the origin, but the
  origin does not belong to it. Since the origin is deterministic,
  $\lambda_{[\eta]}$ also in this case is determined by
  $\partial\eta$ and our results are applicable after localizing with
  $B\in\cX_0$. 
\end{example}

\section{Examples}
\label{section:ex}

Here we describe several further examples of stopping sets, which lead
to various Efron-type formulas. 

\begin{example}[Generalization of convex hulls]
  Let $m\in\N$, and, for a finite counting measure $\mu$, let
  $P_{-m}=P_{-m}(\mu)$ be the intersection of all closed half-spaces
  whose complements contain at most $m$ points from $\supp\mu$.
  Equivalently, $P_{-m}$ is the set of points which cannot be
  separated from $\supp\mu$ by a closed half-space leaving at most $m$
  support points on the other side. The set $P_{-m}$ is a
  possibly empty polytope, e.g., if $\mu(\R^d)\leq 2m-1$ and all
  support points are distinct. Define $\partial_{-m}\mu$ as the set of
  all points from $\mu$ (retaining the multiplicities) which belong to
  the boundary of $P_{-m}$. Then $Z(\mu)$ equals the complement to the
  relative interior of $P_{-m}$. Property~\eqref{eq:st-set-Z} holds,
  conditions of Lemma~\ref{lemma:generator-from-Z} are satisfied, so
  this construction indeed yields a generator.
\end{example}

\begin{example}[Zero cell of a hyperplane tessellation]
  Let $\BX$ be the affine Grassmannian $A(d,d-1)$, which is the
  collection of all affine $(d-1)$-dimensional subspaces in $\R^d$. A
  Poisson point process $\eta$ on $\BX$ defines a \emph{Poisson
    hyperplane tessellation}. For each hyperplane $H\in\eta$ we define
  the half-space which contains the origin; this is well defined under
  the usual assumption that no hyperplane of $\eta$ contains the
  origin almost surely, which holds for the standard stationary
  hyperplane process. The intersection of all such half-spaces defines
  a random polytope $P$, which is said to be the \emph{zero cell} of
  the tessellation. The set
  $Z:=\{H\in A(d,d-1): H\cap P\neq\emptyset\}$ is a locally finite
  stopping set. Under the usual general-position assumptions,
  $\eta(Z)$ equals almost surely the number $f_{d-1}(P)$ of facets of
  $P$. By Corollary~\ref{c2.5}, we obtain that
  $\BE f_{d-1}(P)=\BE \lambda(Z)$.
  With the normalization of the hyperplane intensity used in
  \cite{MR4807325}, this gives
  \begin{displaymath}
    \BE f_{d-1}(P)
    = 2\gamma \BE \int_{\mathbb{S}^{d-1}} h(P,u) Q(du),
  \end{displaymath}
  where $Q$ is the surface measure on the unit sphere and $h(P,u)$ is
  the support function of $P$ in direction $u$. 
  In this way we recover the Efron formula for the zero cell given in
  Equation~(6.13) from \cite{MR4807325}.
\end{example}

\begin{example}[Ball hulls]
  Let $\mathbb B^d$ be the unit Euclidean ball in $\R^d$.  The
  \emph{ball hull} of a set $L\subset \mathbb B^d$ is the intersection
  of all translated copies of $\mathbb B^d$ which contain $L$.  A set
  $L$ is said to be ball convex if it coincides with its ball hull.
  Assume that $\BX=\mathbb B^d$ and that $\mu\in\bN(\mathbb B^d)$.
  Define
  $Z(\mu)$ to be the complement to the interior of the ball hull of
  $\supp\mu$. Then $\partial\mu$ is the set of $x\in\mu$ (with
  multiplicities retained) which lie on the boundary of the ball hull
  of $\supp\mu$.

  If the support of $\psi$ is contained in the interior of the ball
  hull of $\supp\mu$, then every unit ball containing $\supp\mu$ also
  contains $\supp\psi$. Hence the ball hull of $\supp\mu\cup\supp\psi$
  coincides with the ball hull of $\supp\mu$.
  Thus, the stopping set property \eqref{eq:st-set-Z}
  holds. The ball hull operation is clearly monotone, so that
  Lemma~\ref{lemma:generator-from-Z} applies.

  In the binomial setting, the Efron formula for ball hulls in
  dimension 2 appears in \cite[Equation~(5.10)]{fod:kev:vig14} and in
  general dimension in \cite[Equation~(1.3)]{fod20} and for
  generalised hulls (with the unit ball replaced by a convex body) in
  \cite[Page~500]{fod:pap:vig20}.  Our identities provide the Poisson
  versions of the known relationships and extend them by a number of
  new ones.
\end{example}

\begin{example}[Pareto optimal points]
  Consider the coordinatewise partial order on $\R^d$. Each $x\in\mu$
  is said to belong to $\partial\mu$ if there is no point
  $y\in\supp\mu$, $y\ne x$, such that $y\le x$ coordinatewise. In this
  case $x$ is said to be a \emph{Pareto optimal} point in
  $\supp\mu$. The complement to the hull is said to be a Pareto
  frontier -- a point landing there automatically joins the
  generator. This generator has the prime property. The preceding
  results then yield relationships between the moments of the number
  of Pareto optimal points, the number of dominated points, and the
  measure of the dominated region.
\end{example}

\begin{example}[Voronoi flower of the typical Voronoi cell]
  \label{ex:flower}
  Assume that $\BX=\R^d$ and let $\mu\in\bN$. Consider the cell
  $C^o(\mu)$ of $0$ in the \emph{Voronoi mosaic} generated by (the
  support) of $\mu+\delta_0$. Let $Z^o(\mu)$ be the associated
  \emph{Voronoi flower}, which is the union of the closed balls
  $B_{\|x\|}(x)$ over all vertices $x$ of $C^o(\mu)$, see
  \cite{zuy92}.  It is well known that $\mu \mapsto Z^o(\mu)$ defines
  a stopping set.  Assume that $\eta$ is a stationary Poisson process
  with intensity $\gamma>0$.  The random convex set
  $C^o\equiv C^o(\eta)$ is known as the \emph{typical cell} of the
  Voronoi mosaic generated by $\eta$.  Let $N^o$ denote the number of
  $(d-1)$-dimensional facets of $C^o$.  The conditional distribution
  of $\lambda_d(Z^o)$ given $N^o=m\ge d+1$ is a Gamma distribution
  with shape parameter $m$ and rate parameter $\gamma$, equivalently
  scale parameter $1/\gamma$, see e.g.\
  \cite[Example~5(ii)]{MR1404304}.

  It follows that
  $\BE \lambda_d(Z^o)=\gamma^{-1}\BE N^o$. Together with the Efron
  type identity \eqref{e:Efrongeneral}, this yields that
  \begin{align}
    \label{meanflower1}
    \BE \eta(Z^o) =\BE N^o,
  \end{align}
  relating the number of Poisson points in the flower with the number
  of facets of $C^o$. They are not equal but have the same mean.
  If  $d=2$, then $\BE N^o=6$; see e.g.\
  \cite[Theorem~10.2.5]{SW08}. In the case 
  $d\ge 3$ there are no explicit formulas for this expectation.
\end{example}

\begin{example}[Voronoi flower of the Voronoi zero cell]
  \label{ex:flowerzero}
  Assume that $\BX=\R^d$ and let $\mu\in\bN\setminus\{0\}$.  Let
  $S(\mu)\in\mu$ be the nearest neighbour of $0$ in (the support of)
  $\mu$, where we use lexicographic order to break ties.  Consider the
  cell $C(\mu)$ of $S(\mu)$ in the Voronoi mosaic generated by $\mu$,
  and let $Z(\mu)$ be the associated flower.  For $\mu=0$ we set
  $C(\mu)=Z(\mu):=\R^d$. It can be checked directly from the stopping
  set definition \eqref{eq:st-set-Z} that $\mu\mapsto Z(\mu)$ is a
  stopping set. As in Example~\ref{ex:flower} we take a stationary
  Poisson process $\eta$ with intensity $\gamma>0$.  The random convex
  set $C(\eta)$ is known as the \emph{zero cell} of the Voronoi mosaic
  generated by $\eta$.  Let $N$ denote the number of
  $(d-1)$-dimensional facets of $C(\eta)$.  By
  \cite[Remark~5.3]{BaumLast09} the conditional distribution of
  $\lambda_d(Z)$ given $N=m\ge (d+1)$ is a Gamma distribution with
  shape parameter $m+1$ and scale parameter $\gamma$. Therefore,
  $\BE \lambda_d(Z)=\gamma^{-1}\BE N+\gamma^{-1}$, and we obtain from
  \eqref{e:Efrongeneral} that
  \begin{align}
    \label{meanflower2}
    \BE \eta(Z) =1+\BE N.
  \end{align}
  This should be compared with \eqref{meanflower1}.
\end{example}

\begin{example}[Nearest neighbor to the origin]
  Let $\BX=\R^d$ and $k\in\N$. Denote by $Z(\mu)$ the smallest ball
  centered at the origin, which contains at least $k$ points from the
  support of $\mu$. If the cardinality of $\mu$ is strictly less than
  $k$, set $Z(\mu):=\R^d$. If $\eta$ is a Poisson point process on
  $\R^d$, then $\eta(Z(\eta))=\min(k-1+\eta(\partial Z),\eta(\R^d))$,
  where $\partial Z$ is the boundary of $Z$.

  If the intensity measure has no atoms on spheres centred at the
  origin, then ties occur with probability zero and
  $\eta(Z(\eta))=k$ almost surely, provided $\eta(\R^d)\ge k$.
  In this case, our
  identities yield a number of formulas concerning the joint
  distribution of $\eta(Z(\eta))$ and the distance to the $k$th
  nearest neighbor of the origin.
  If $\eta$ has an infinite absolutely continuous intensity measure
  $\lambda$ and $\lambda(\partial B_r(0))=0$ for all $r>0$, then
  $\eta(Z(\eta))=k$, and \eqref{e:Efrongeneral} yields that
  $\BE\lambda(Z)=k$. In fact, it can be proved that $\lambda(Z)$ has a
  Gamma distribution with shape parameter $k$ and scale parameter
  $1$. If $\eta$ is a stationary Poisson process, this is a classical
  result; see \cite{MR1404304}. In the general case one can use
  \cite[Proposition~A.5]{LaPeYo23} (or Theorem~\ref{t1}) to obtain a
  representation of the distribution of $\lambda(Z)$ as an integral
  with respect to $\lambda$ which can then be transformed to the
  desired one-dimensional integral. We skip here the details of this
  computation.
\end{example}

\begin{example}[Embracing graph]
  Assume that $\BX=\R^d$ and let $\mu\in\bN$. Let
  $x_1(\mu),x_2(\mu),\ldots$ be the points from the support of $\mu$
  ordered in the order of increase of their distance to $0$ (in case
  of ties, the order is chosen arbitrarily).
  Let $N\equiv N(\mu)$ be the smallest integer such that the positive
  hull of the vectors $x_1(\mu),\dots,x_N(\mu)$ is the whole space.
  If no such integer exists, we set $Z(\mu):=\R^d$. Then
  $Z(\mu):=B_{\|x_N\|}(0)$ is a stopping set.  For instance, let $\eta$ be
  a homogeneous Poisson process of intensity $1$.  If $d=2$, then
  $\BE N(\eta)=5$ and $\Var N(\eta)=4$, see \cite{chiu:mol03}. Thus,
  $\BE \lambda(Z)=\pi \BE \|x_{N(\eta)}(\eta)\|^2=5$.
\end{example}

\begin{example}[Sums]
  \label{ex:sums}
  Let $\BX=\R_+\times\R^d$ with the system $\cX_0$ of bounded sets
  $[0,t]\times\R^d$ for $t>0$. Since $\mu([0,t]\times\R)$ is finite for
  all $t>0$, it is possible to represent $\mu$ as
  $\mu=\sum \delta_{(x_i,y_i)}$ with $x_1\leq x_2\leq\cdots$. Define
  \begin{displaymath}
    S_i:=y_1+\cdots+y_i,\quad T_i:=x_1+\cdots+x_i,\quad i\geq1,
  \end{displaymath}
  and let $Z(\mu):=[0,T_m]\times\R$, where $m:=\min\{i:S_i\geq a\}$
  for some fixed $a\in\R$ and the infimum of the empty set is
  interpreted as $\infty$. Then $\mu\mapsto Z(\mu)$ is a stopping set,
  which is not monotone in 
  $\mu$ due to possible presence in $\mu$ points with negative $y_i$.
\end{example}

\begin{example}[Pointwise maxima of functions]
  Let $\BX$ be a family of real-valued functions on $\R^d$ such that
  pointwise maxima over locally finite subfamilies are measurable,
  then the natural generator is given by letting $\partial\mu$ consist
  of all functions $f\in\supp\mu$ such that the pointwise maximum of
  all functions from $\supp\mu$ differs from the pointwise maximum of
  all functions from $\supp\mu$ excluding $f$ with its possible
  multiplicity.  Such functions are said to be \emph{pivotal}.  In
  this setting, our identities yield relationships between the number
  of pivotal functions generated by the Poisson process $\eta$, the
  number of non-pivotal functions, and the measure of the class of
  functions dominated by the pointwise maximum of the functions from
  $\eta$.
\end{example}

\renewcommand{\thesection}{A}
\setcounter{equation}{0}
\setcounter{theorem}{0}

\section*{Appendix \thesection: Relations between stopping set, hull
  and generator}
\label{sec:appendix}

A measurable map $\partial\colon\bN\to\bN$ is said to be a
\emph{generator} if it satisfies the following properties listed in
\cite{LM23}:
\begin{enumerate}
\item[(H1)] $\partial\mu\leq \mu$ (thinning);
\item[(H2)] for all $\mu\in\bN$ and $\partial\mu(\{x\})\geq1$, we have
  $\partial(\mu+\delta_x)=\partial\mu + \delta_x$ (additivity);
\item[(H3)] for all $\mu,\mu'\in\bN$ such that
  $\mu'\leq \mu-\partial\mu$, we have
  $\partial(\partial\mu+\mu')=\partial\mu$ (idempotency);
\item[(H4)] if $\mu,\mu'\in\bN$ satisfy $\mu'\leq\mu$ and
  $\partial\mu=\partial\mu'$, then
  $\partial(\mu+\psi)=\partial(\mu'+\psi)$ for all $\psi\in\bN$
  (consistency).
\end{enumerate}

The associated \emph{hull operator} is the map from $\bN$ to $\cX$ defined by
\eqref{eq:hull-partial}.
We first show that each generator produces a stopping set.

Since $\partial$ is measurable and $(x,\mu)\mapsto \mu+\delta_x$ is
measurable, the set
\begin{displaymath}
  \{(x,\mu):\partial(\mu+\delta_x)=\partial\mu\}
\end{displaymath}
is measurable. Thus, the hull map $\mu\mapsto[\mu]$ is graph
measurable, and so is $Z(\mu)=[\mu]^c$.

\begin{lemma}\label{l:hullstopping}
  Assume that $\partial\colon\bN\to\bN$ is a generator.  Define
  $Z\colon\bN\to\cX$ by
  \begin{displaymath}
    Z(\mu):=[\mu]^c,\quad \mu\in\bN.
  \end{displaymath}
  Then \eqref{eq:st-set-Z} holds for each $\mu\in\bN$ and each
  $\psi\in\bN$ and $Z(\mu)$ is monotone decreasing in $\mu$.
\end{lemma}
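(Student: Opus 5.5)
The proof rests on one basic reformulation. By \eqref{eq:hull-partial}, $x\in Z(\mu)$ if and only if $\partial(\mu+\delta_x)\ne\partial\mu$. By (H1), $\mu_{Z(\mu)}=\partial\mu$; we use this as the known identity $\partial\mu=\mu_{[\mu]^c}$ from \cite{LM23}, and if needed we rederive it from (H1)–(H4) via the generator identity $\partial\mu(B)=\int_B H_x(\mu-\delta_x)\,\mu(dx)$.

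We first prove monotonicity: if $\mu\le\mu'$, then $[\mu]\subset[\mu']$. Take $x\in[\mu]$, so $\partial(\mu+\delta_x)=\partial\mu$. Write $\mu'=\mu+\psi$. Apply (H4) with the pair $\mu+\delta_x\ge\mu$, which have the same generator, and add $\psi$. This gives $\partial(\mu'+\delta_x)=\partial(\mu')$, hence $x\in[\mu']$. So $Z$ is decreasing. The only subtlety is that (H4) requires $\mu'\le\mu$ in its own notation. Here the smaller measure is $\mu$ and the larger is $\mu+\delta_x$, which fits exactly.

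Next we prove \eqref{eq:st-set-Z}. Fix $\mu,\psi$ and set $\nu:=\mu_{Z(\mu)}+\psi_{Z(\mu)^c}=\partial\mu+\psi_{[\mu]}$. The key step, and the main obstacle, is to show $\partial\nu=\partial\mu$. Once that is established, (H4) applied to $\partial\mu\le\nu$, together with $\partial(\partial\mu)=\partial\mu$ (which is (H3) with $\mu'=0$), gives $\partial(\nu+\delta_x)=\partial(\partial\mu+\delta_x)=\partial(\mu+\delta_x)$ for every $x$. The second equality again uses (H4) with $\partial\mu\le\mu$. Hence $x\in[\nu]\iff x\in[\mu]$, that is, $Z(\nu)=Z(\mu)$.

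To show $\partial(\partial\mu+\psi_{[\mu]})=\partial\mu$, the plan is as follows. First, by (H4) it suffices to show $\partial(\partial\mu+\chi)=\partial\mu$ for $\chi:=\psi_{[\mu]}$. Points of $\chi$ lie in $[\mu]=[\partial\mu]$, where the equality of hulls follows from the previous paragraph's (H4) argument. We add the points of $\chi$ one at a time, $\chi=\sum_i\delta_{y_i}$, assuming first that $\chi$ is finite. Suppose that after adding $y_1,\dots,y_{j}$ the generator is still $\partial\mu$. Then by (H4) the hull is still $[\partial\mu]$, which contains $y_{j+1}$, so adding $y_{j+1}$ keeps the generator equal to $\partial\mu$. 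Induction handles finite $\chi$.

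For infinite $\chi$ a limit step is needed, and this is where I expect trouble; compare Example~\ref{ex:chull-problem}, where exactly this kind of passage fails. I would first try to avoid it by using (H3) directly: choose $\mu'$ with $\mu'\le\mu-\partial\mu$ and $\mu'\supseteq$ the relevant points. Points of $\psi_{[\mu]}$ need not be points of $\mu$, so (H3) does not apply as stated. As a fallback, I would combine the finite case with the localising structure. For $x$ fixed, membership $x\in Z(\nu)$ should be decided by finite restrictions $\chi_{B_n}$ through (H4), since $\partial\mu+\chi_{B_n}\le\nu$ have equal generators. If that is insufficient, I would pass to the limit using the consistency axiom on the increasing sequence $\partial\mu+\chi_{B_n}$.
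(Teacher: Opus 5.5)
\textbf{There is a genuine gap: the key identity is proved only for finite $\chi$.}

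Your monotonicity argument is the paper's. Your reduction of \eqref{eq:st-set-Z} is also correct and mirrors the paper: you reduce it to the single identity $\partial\nu=\partial\mu$ for $\nu:=\partial\mu+\psi_{[\mu]}$, then apply (H4) to $\partial\mu\le\nu$ and $\partial\mu\le\mu$ to get $[\nu]=[\mu]$. The paper takes both that identity and the equality of hulls from Lemmas~2.5 and~2.7 of \cite{LM23}.

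The gap is that you prove $\partial\nu=\partial\mu$ only when $\chi:=\psi_{[\mu]}$ is finite. The lemma is claimed for every $\psi\in\bN$, and $[\mu]$ need not belong to $\cX_0$, so $\chi$ may well be infinite. Neither fallback closes this. The assertion that $\partial\mu+\chi_{B_n}\le\nu$ ``have equal generators'' is exactly the identity you are trying to prove. Moreover, (H4) says nothing about increasing sequences, so there is no continuity property of $\partial$ to pass to the limit with. Two minor points: $\mu_{Z(\mu)}=\partial\mu$ does not follow from (H1) alone, though citing it from \cite{LM23} is fine; and ``by (H4) it suffices to show'' restates the claim rather than reducing it.

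\textbf{The missing step.} No limit is needed. Use (H4) with the infinite remainder of $\chi$ in the role of $\psi$, and then apply (H3) to $\nu$.

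Put $\kappa:=\partial\mu$, so $\partial\kappa=\kappa$ by (H3) with $\mu'=0$. Since $\chi_{B_n}$ is finite and supported in $[\mu]=[\kappa]$, your induction gives $\partial(\kappa+\chi_{B_n})=\kappa=\partial\kappa$. Hence (H4) for the pair $\kappa\le\kappa+\chi_{B_n}$ with $\psi:=\chi_{B_n^c}$, followed by (H1), gives
\[
\partial\nu=\partial\big(\kappa+\chi_{B_n}+\chi_{B_n^c}\big)=\partial\big(\kappa+\chi_{B_n^c}\big)\le\kappa+\chi_{B_n^c},\quad n\in\N .
\]
Every $C\in\cX_0$ lies in some $B_m$, so $\partial\nu(C)\le\kappa(C)$, and hence $\partial\nu\le\kappa$.

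Now $\mu':=\kappa-\partial\nu$ satisfies $\mu'\le\nu-\partial\nu$, because $\kappa\le\nu$. Applying (H3) to $\nu$ yields $\partial\nu=\partial(\partial\nu+\mu')=\partial\kappa=\kappa$.

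With this step inserted, your argument becomes a complete proof from the axioms. It avoids the appeal to Lemma~2.7 of \cite{LM23}, which the paper relies on at exactly this point.
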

\begin{proof}
  By Lemma~2.7 from \cite{LM23}, $\nu_{Z(\nu)}=\kappa$ for
  $\nu,\kappa\in\bN$ is equivalent to $\nu_{Z(\kappa)}=\kappa$ and
  implies $[\nu]=[\kappa]$. Let $\kappa=\partial\mu$ and
  $\nu=\partial\mu+\psi_{[\mu]}$ for any $\psi\in\bN$. Then
  $Z(\kappa)=Z(\partial\mu)=Z(\mu)$ and
  \begin{displaymath}
    \nu_{Z(\kappa)}=\big(\partial\mu+\psi_{[\mu]}\big)_{Z(\mu)}=\partial\mu=\kappa.
  \end{displaymath}
  Hence, taking into account Lemma~2.5 from \cite{LM23} for the last
  equality,
  \begin{displaymath}
    [\nu]=\big[\partial\mu+\psi_{[\mu]}\big]=[\kappa]=[\partial\mu]=[\mu].
  \end{displaymath}
  Therefore, $Z(\partial\mu+\psi_{[\mu]})=Z(\mu)$, which is exactly the
  definition property \eqref{eq:st-set-Z} of a stopping set.
  Finally, note that $\mu\mapsto[\mu]$ is monotone increasing, since
  $\partial\mu=\partial(\mu+\delta_x)$ implies
  $\partial(\mu+\psi)=\partial(\mu+\psi+\delta_x)$ by (H4). 
\end{proof}

Conversely, let $Z\colon\bN\to\cX$ be a stopping set. Motivated by
Lemma~\ref{l:hullstopping}, define
\begin{equation}
  \label{eq:Zgen}
  \partial\mu:=\mu_{Z(\mu)},\quad \mu\in\bN.
\end{equation}
Note that the stopping set is not necessarily monotone decreasing,
meaning that the defined generator does not necessarily satisfy all
properties (H1)-(H4), see Example~\ref{ex:sums}. 

\begin{lemma}
  \label{lemma:generator-from-Z}
  Let $Z\colon\bN\to\cX$ be a stopping set such that
  \begin{equation}
    \label{eq:Zplusx}
    Z(\mu+\delta_x)=Z(\mu), \quad x\in\mu_{Z(\mu)}.
  \end{equation}
  Then the map given at \eqref{eq:Zgen} satisfies (H1)--(H3).
  If $Z$ is monotone decreasing, then (H4) also holds and the
  corresponding hull operator given by $[\mu]:=Z(\mu)^c$ satisfies
  \eqref{eq:hull-partial}.
\end{lemma}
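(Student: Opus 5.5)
We verify (H1)--(H3) one at a time for $\partial\mu:=\mu_{Z(\mu)}$, using the stopping set property \eqref{eq:st-set-Z}, its consequence $Z(\mu)=Z(\mu_{Z(\mu)})$, and the extra assumption \eqref{eq:Zplusx}. Property (H1) is immediate, since $\mu_{Z(\mu)}\le\mu$.

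For (H2), take $x$ with $\partial\mu(\{x\})\ge1$, so that $x\in Z(\mu)$ and $x\in\mu_{Z(\mu)}$. By \eqref{eq:Zplusx}, $Z(\mu+\delta_x)=Z(\mu)$. Hence
\[
\partial(\mu+\delta_x)=(\mu+\delta_x)_{Z(\mu)}=\mu_{Z(\mu)}+\delta_x=\partial\mu+\delta_x.
\]

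For (H3), let $\mu'\le\mu-\partial\mu=\mu_{Z(\mu)^c}$, so that $\mu'$ is concentrated on $Z(\mu)^c$ and $\mu'=\mu'_{Z(\mu)^c}$. Apply \eqref{eq:st-set-Z} with $\psi=\mu'$:
\[
Z(\partial\mu+\mu')=Z(\mu_{Z(\mu)}+\mu'_{Z(\mu)^c})=Z(\mu).
\]
Restricting $\partial\mu+\mu'$ to this set kills $\mu'$ and gives $\partial(\partial\mu+\mu')=\partial\mu$.

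Now assume $Z$ is monotone decreasing. The main step is (H4): we must show that $\mu'\le\mu$ and $\partial\mu=\partial\mu'$ imply $\partial(\mu+\psi)=\partial(\mu'+\psi)$. The plan is first to prove $Z(\mu)=Z(\mu')$. Self-replication gives
\[
Z(\mu)=Z(\mu_{Z(\mu)})=Z(\partial\mu)=Z(\partial\mu')=Z(\mu').
\]
Put $Z_0:=Z(\mu)$. Since $\mu'\le\mu$ and the restrictions of $\mu$ and $\mu'$ to $Z_0$ coincide, we have $\mu=\mu'+\rho$ with $\rho$ concentrated on $Z_0^c$.

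Monotonicity gives $Z(\mu+\psi)\subset Z(\mu'+\psi)\subset Z(\mu')=Z_0$. The natural route is to write $\mu+\psi=(\mu'+\psi)+\rho$ and apply \eqref{eq:st-set-Z} to $\nu:=\mu'+\psi$. That requires $\rho$ to live on $Z(\nu)^c$, which holds because $Z(\nu)\subset Z_0$ and $\rho$ lives on $Z_0^c$. We then get
\[
Z(\nu)=Z\bigl(\nu_{Z(\nu)}+(\nu_{Z(\nu)^c}+\rho)\bigr)=Z(\nu+\rho)=Z(\mu+\psi).
\]
Restricting to this common set and using $\rho_{Z(\nu)}=0$ gives $\partial(\mu+\psi)=\partial(\mu'+\psi)$. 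This step is the one to check carefully. The point is that the second argument in \eqref{eq:st-set-Z} may be an arbitrary $\psi\in\bN$ restricted to the complement, and here we take it to be $\nu_{Z(\nu)^c}+\rho$, which is concentrated on $Z(\nu)^c$.

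Finally, we show that $[\mu]:=Z(\mu)^c$ agrees with \eqref{eq:hull-partial}, that is, $x\notin Z(\mu)$ iff $\partial(\mu+\delta_x)=\partial\mu$.

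If $x\notin Z(\mu)$, apply \eqref{eq:st-set-Z} with $\psi=\mu_{Z(\mu)^c}+\delta_x$. This gives $Z(\mu+\delta_x)=Z(\mu)$, and restricting yields $\partial(\mu+\delta_x)=\partial\mu$ because $x$ lies outside $Z(\mu)$.

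Conversely, suppose $x\in Z(\mu)$. By monotonicity, $\partial(\mu+\delta_x)$ is the restriction of $\mu+\delta_x$ to $Z(\mu+\delta_x)\subset Z(\mu)$. If $\partial(\mu+\delta_x)=\partial\mu$, self-replication gives
\[
Z(\mu+\delta_x)=Z(\partial(\mu+\delta_x))=Z(\partial\mu)=Z(\mu)\ni x.
\]
Then $\partial(\mu+\delta_x)=\mu_{Z(\mu)}+\delta_x\ne\partial\mu$, a contradiction. So $x\in Z(\mu)$ forces $\partial(\mu+\delta_x)\ne\partial\mu$, which completes the proof.
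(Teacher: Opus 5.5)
Your proof is correct and follows the paper's argument essentially step by step: (H1)--(H3) are checked identically, (H4) goes through $Z(\mu)=Z(\mu')$ followed by the stopping-set property and monotonicity, and \eqref{eq:hull-partial} is verified in both directions. The differences are cosmetic. In (H4) you add $\rho=\mu-\mu'$ off $Z(\mu'+\psi)$ directly via \eqref{eq:st-set-Z}, where the paper instead notes that $Z(\mu+\psi)$ depends only on $(\mu+\psi)_{Z(\mu)}$. In the converse direction of the hull identity you argue by self-replication rather than first showing $x\in Z(\mu+\delta_x)$; as a result that step, like your (H4) beyond $Z(\mu'+\psi)\subset Z(\mu')$, does not actually use monotonicity.
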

\begin{proof}
  Condition (H1) is trivial.
  Condition (H2) follows from assumption \eqref{eq:Zplusx}
  imposed on the stopping set: if
  $x\in\partial\mu$, then
  \begin{displaymath}
    \big(\mu+\delta_x\big)_{Z(\mu+\delta_x)}
    =\mu_{Z(\mu)}+\delta_x. 
  \end{displaymath}  
  If $\mu'\leq \mu-\partial\mu$, then $\mu'\leq \mu_{Z(\mu)^c}$, and so
  \begin{displaymath}
    Z(\partial\mu+\mu')=Z\big(\mu_{Z(\mu)}+\mu'_{Z(\mu)^c}\big)=Z(\mu).
  \end{displaymath}
  Condition (H3) holds, since
  \begin{displaymath}
    \partial\big(\partial\mu+\mu'\big)
    =\big(\partial\mu+\mu'\big)_{Z(\partial\mu+\mu')}
    =\big(\partial\mu+\mu'\big)_{Z(\mu)}=\partial\mu. 
  \end{displaymath}
  In order to confirm (H4), note that
  \begin{displaymath}
    Z(\mu')=Z(\mu'_{Z(\mu')})=Z(\partial\mu')=Z(\partial\mu)=Z(\mu). 
  \end{displaymath}
  Thus,
  \begin{displaymath}
    \mu'_{Z(\mu')}=\mu'_{Z(\mu)}=\mu_{Z(\mu)}.
  \end{displaymath}

  Since $Z(\mu+\psi)\subset Z(\mu)$, we have that $Z(\mu+\psi)$
  depends only on $(\mu+\psi)$ restricted onto $Z(\mu)$. Therefore,
  \begin{displaymath}
    Z(\mu+\psi)=Z\big(\mu_{Z(\mu)}+\psi_{Z(\mu)}\big)
    =Z\big(\mu'_{Z(\mu')}+\psi_{Z(\mu')}\big)=Z(\mu'+\psi).
  \end{displaymath}
  Since $Z(\mu'+\psi)=Z(\mu'+\psi)\subset Z(\mu)$,
  the equality 
  \begin{displaymath}
    (\mu+\psi)_{Z(\mu)}=(\mu'+\psi)_{Z(\mu')}
  \end{displaymath}
  implies that
  \begin{displaymath}
    (\mu+\psi)_{Z(\mu+\psi)}=(\mu'+\psi)_{Z(\mu'+\psi)}.
  \end{displaymath}

  It remains to prove \eqref{eq:hull-partial}. We show that
  \begin{displaymath}
    Z(\mu) = \{x\in\BX:\partial(\mu+\delta_x)\ne\partial\mu\}.
  \end{displaymath}
  First assume that $x\notin Z(\mu)$. Then, by the stopping-set
  property, $Z(\mu+\delta_x)=Z(\mu)$.  Consequently,
  \begin{displaymath}
    \partial(\mu+\delta_x)
    = (\mu+\delta_x)_{Z(\mu+\delta_x)}
    = (\mu+\delta_x)_{Z(\mu)}
    = \mu_{Z(\mu)}
    = \partial\mu.
  \end{displaymath}
  Conversely, assume that $x\in Z(\mu)$. By monotonicity,
  $Z(\mu+\delta_x)\subset Z(\mu)$.  We first claim that
  $x\in Z(\mu+\delta_x)$. If not, then $\mu$ and $\mu+\delta_x$ agree
  on $Z(\mu+\delta_x)$. By the observation used in the proof of (H4),
  this would imply $Z(\mu)=Z(\mu+\delta_x)$, 
  contradicting $x\in Z(\mu)$ and $x\notin Z(\mu+\delta_x)$. Hence
  $x\in Z(\mu+\delta_x)$. Therefore,
  \begin{displaymath}
    \partial(\mu+\delta_x)(\{x\})
    = \mu(\{x\})+1,
    \qquad
    \partial\mu(\{x\})
    = \mu(\{x\}),
  \end{displaymath}
  and so $\partial(\mu+\delta_x)\ne\partial\mu$.
\end{proof}

\renewcommand{\thesection}{B}
\setcounter{equation}{0}
\setcounter{theorem}{0}

\section*{Appendix \thesection: Convex hulls of infinite Poisson processes}
\label{sec:case-infin-intens}

Let $P:=\conv(\eta)$ and $\bP$ be its closure. Let
$C_\lambda=\rec(\bP)$ be the recession cone of $\bP$, that is, the set
of all $x\in\R^d$ such that $y+tx\in\bP$ for all $y\in\bP$ and
sufficiently large $t$. Note that the recession cone is polar
to the barrier cone, which is the
set of all $u$ such that the support function
$h(\bP,u)=\sup\{\langle x,u\rangle: x\in\bP\}$ is finite. Recall that
the polar to a cone $C$ is defined by 
\begin{displaymath}
  C^o:=\{u:\langle x,u\rangle \leq 0 \;\text{for all}\; x\in C\}.
\end{displaymath}
While the barrier cone is not always closed, the recession cone is a
closed set. 

\begin{lemma}
  The recession cone $C_\lambda$ of $\bP$ is deterministic. 
\end{lemma}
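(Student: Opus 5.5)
We need to prove that the recession cone of $\bP$, the closure of $\conv(\eta)$ for a Poisson process $\eta$ on $\R^d$ with locally finite intensity $\lambda$, is almost surely equal to a deterministic cone. Our approach is via the barrier cone and a zero--one law for the support function. Since $C_\lambda$ is closed, convex and polar to the barrier cone, it is enough to show that the closure of the barrier cone is deterministic. The barrier cone is
\begin{displaymath}
  D(\eta):=\{u\in\R^d: h(\bP,u)<\infty\}.
\end{displaymath}
We use that the recession cone of a nonempty closed convex set equals the polar of its barrier cone, and that a cone and its closure have the same polar. We may assume $\eta\neq0$ with positive probability; if $\eta=0$ almost surely, then $\lambda=0$ and $\bP=\emptyset$ almost surely, which is trivial.

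\emph{Step 1 (tail event for a fixed direction).} Fix $u\in\R^d$. The event $\{h(\bP,u)<\infty\}$ is unchanged if finitely many points of $\eta$ are added or removed. Indeed, $h(\bP,u)=\sup_{x\in\eta}\langle x,u\rangle$, and altering finitely many points changes this supremum only by a finite maximum (on the event $\eta\neq0$). Write $\eta$ as the sum of the independent processes $\eta_{B_n}$ and $\eta_{B_n^c}$, where $B_n$ is the bounded localising sequence and $\eta(B_n)<\infty$ almost surely. Then the event is measurable with respect to the tail field $\bigcap_n\sigma(\eta_{B_n^c})$. The restrictions of $\eta$ to the disjoint rings $B_{n+1}\setminus B_n$ are independent, so Kolmogorov's zero--one law shows that $\BP(h(\bP,u)<\infty)\in\{0,1\}$. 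The same argument applies to the event $\{\eta=0\}$. Alternatively, and more explicitly, finiteness of the supremum should be equivalent to $\lambda(\{x:\langle x,u\rangle>t\})<\infty$ for some $t$. We will not need this characterisation.

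\emph{Step 2 (countable dense set).} Let $D_0$ be the set of $u\in\Q^d$ with $\BP(h(\bP,u)<\infty)=1$. By Step 1 and countability, almost surely $D(\eta)\cap\Q^d=D_0$. The hard part is to pass from rational directions to the closure of $D(\eta)$. The barrier cone is a convex cone, but it may have empty interior, and it may contain no rational points even when it is nontrivial; a lower-dimensional cone in an irrational subspace is an example. So we cannot simply take closures of rational points. We therefore compare polars directly instead. We claim that $C_\lambda$ is almost surely equal to $\{x:\langle x,u\rangle\le0 \text{ for all } u\in D'\}$, where $D'$ is a countable family of directions chosen deterministically.

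\emph{Step 3 (handling the obstacle).} Fix a countable dense subset $Q$ of $\R^d$. For each pair $(x,q)\in Q\times Q$ and rational $r>0$, the event
\begin{displaymath}
  A_{x,q,r}:=\{\text{there is } y\in\bP\cap B(q,r) \text{ with } y+tx\in\bP \text{ for all } t\ge0\}
\end{displaymath}
is again invariant under finitely many changes of the point configuration, up to the boundary issues handled below. It would then have probability $0$ or $1$. The obstacle is that $C_\lambda$ itself need not meet $Q$ in a dense subset.

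First we will try a cleaner route: show that the random closed convex cone $C_\lambda$, viewed as a random closed set, is measurable with respect to the tail field. For a closed convex set, $\rec(\bP)=\rec(\overline{\conv}(\eta_{B_n^c}\cup F))$ for any finite set $F$. Adding or removing the finite set $\supp\eta_{B_n}$ changes $\bP$ only by taking the closed convex hull with a compact polytope. By the rule $\rec(\overline{\conv}(A\cup K))=\rec(\overline{A})$ for compact $K$ and nonempty $A$, it follows that $C_\lambda$ is a measurable function of $\eta_{B_n^c}$ for every $n$, on the event that $\eta_{B_n^c}\neq0$. The degenerate case, in which $\eta$ is almost surely finite, gives $C_\lambda=\{0\}$. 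Hence each event $\{C_\lambda\cap K=\emptyset\}$ with $K$ compact is a tail event and has probability $0$ or $1$. Since the distribution of a random closed set is determined by these hitting probabilities, and countably many compact $K$ suffice, $C_\lambda$ is almost surely equal to a fixed closed set.

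The main obstacle is therefore verifying the identity $\rec(\overline{\conv}(A\cup K))=\rec(\overline{A})$ for a compact set $K$ and an unbounded set $A$. One inclusion is standard. For the other, we would take a direction $x$ with $y+tx\in\overline{\conv}(A\cup K)$ and show that $x$ is a limit of normalised differences of points of $A$, using the boundedness of $K$. We also need the measurability of $\mu\mapsto\rec(\overline{\conv}\mu)$, which follows as in Lemma~\ref{l:23.18}.
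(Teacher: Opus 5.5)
Your final argument, the ``cleaner route'', is essentially the paper's proof, and it is correct. You show that $C_\lambda$ is a function of $\eta_{B^c}$ for every bounded $B$, obtain a trivial tail $\sigma$-field from the zero--one law on independent annuli, and conclude from the $0$--$1$ hitting probabilities that the closed set $C_\lambda$ is deterministic.

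The one real difference is the step you flag as the main obstacle and leave as a plan: $\rec(\overline{\conv}(A\cup K))=\rec(\overline{\conv}A)$ for compact $K$ (write $\overline{\conv}A$ here, not $\overline A$). The paper settles it in one line, using exactly the polarity between recession cone and barrier cone that you invoked at the start and then dropped. Since $h(\overline{\conv}(A\cup K),u)=\max\{h(A,u),h(K,u)\}$ and $h(K,\cdot)$ is finite, adjoining the finitely many points of $\eta_B$ does not change the barrier cone, and hence does not change its polar $C_\lambda$. The paper also replaces $\eta_B$ by $\delta_0$, which removes your case distinction on $\{\eta_{B^c}=0\}$.

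Your sketched direct route would also work. Points of $\conv(C\cup K)$ with $C$ convex have the form $(1-s)c+sk$, and after multiplying by $t_i\downarrow 0$ the $K$-part vanishes. The characterisation of $\rec C$, for nonempty closed convex $C$, as the set of limits $\lim_i t_ic_i$ with $c_i\in C$ and $t_i\downarrow0$ then gives the nontrivial inclusion.

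Steps~1--3 are not needed, and one remark there is wrong. The event $\{\eta=0\}$ is not a tail event: for finite nonzero $\lambda$ its probability is $e^{-\lambda(\R^d)}\in(0,1)$. Relatedly, your claim that $C_\lambda=\{0\}$ in the finite case needs a convention for the recession cone when $\bP=\emptyset$. The paper also passes over this point, but it only applies the lemma when $\lambda(\R^d)=\infty$.
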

\begin{proof}
  Let $B=B_R$ be the closed ball of radius $R$ centred at zero, and
  let $\bP_R$ be the closed convex hull of
  $\eta_{B^c}+\delta_0$. Since the support functions of $\bP$ and
  $\bP_R$ are finite or infinite at the same argument, their barrier
  cones coincide.  Thus, the recession cones of $\bP$ and $\bP_R$
  coincide.

  Therefore, $C_\lambda$ is measurable with respect to the
  $\sigma$-algebra generated by $\eta_{B^c}$, hence, with respect to
  the tail $\sigma$-algebra of $\eta$. Splitting the space into
  disjoint annuli and using the 0-1 law, we see that the tail
  $\sigma$-algebra is trivial.  Then $C_\lambda$ is the complement to
  the union of all open $G$ such that $\BP(C_\lambda\cap
  G=\emptyset)=1$ and so is deterministic.
\end{proof}

For each $u\in\R^d$ and $t>0$ define
\begin{displaymath}
  E_u(t):=\{x\in\R^d:\langle x,u\rangle\geq t\}.
\end{displaymath}
Introduce
\begin{displaymath}
  L_\lambda=\{u\in\R^d: \lambda(E_u(t))<\infty\;
  \text{for all sufficiently large}\; t\}.
\end{displaymath}

\begin{lemma}
  The set $L_\lambda$ is a convex cone and $C_\lambda=(L_\lambda)^o$.
\end{lemma}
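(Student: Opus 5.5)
Two claims have to be checked. First, $L_\lambda$ is a convex cone. Second, $C_\lambda=(L_\lambda)^o$. For the second I would go through the barrier cone of $\bP$, written
\begin{displaymath}
  \mathrm{bar}(\bP):=\{u:h(\bP,u)<\infty\}.
\end{displaymath}
For a closed convex set $\bP$, the recession cone is the polar of the barrier cone (closure of the barrier cone does not affect the polar). So it suffices to show that almost surely the barrier cone and $L_\lambda$ have the same polar. The natural route is to show they coincide up to closure. Throughout I assume $\eta\neq0$, i.e.\ $\lambda\neq 0$; the trivial case can be handled separately.

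\textbf{Cone property.} If $u\in L_\lambda$ and $c>0$, then $E_{cu}(t)=E_u(t/c)$, so $cu\in L_\lambda$; also $0\in L_\lambda$ since $E_0(t)=\emptyset$ for $t>0$. For convexity take $u,v\in L_\lambda$ and $s\in[0,1]$. If $\langle x,su+(1-s)v\rangle\ge t$, then $\langle x,u\rangle\ge t$ or $\langle x,v\rangle\ge t$. Hence
\begin{displaymath}
  E_{su+(1-s)v}(t)\subset E_u(t)\cup E_v(t),
\end{displaymath}
which has finite $\lambda$-measure for large $t$.

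\textbf{Barrier cone versus $L_\lambda$.} Note that $h(\bP,u)=\sup_{x\in\eta}\langle x,u\rangle$.
\begin{itemize}
\item[(i)] If $u\in L_\lambda$, then $\eta(E_u(t))$ is Poisson with finite mean for large $t$. So $\eta(E_u(t))<\infty$ a.s., and $\eta(E_u(t'))=0$ for some larger $t'$ since these counts decrease to $0$. Thus a.s.\ $h(\bP,u)<\infty$. To pass from fixed $u$ to all $u$ simultaneously, I use a countable dense subset of $L_\lambda$ together with convexity of the barrier cone. This gives a.s.\ $\mathrm{int}\,L_\lambda\subset \mathrm{bar}(\bP)$, or at least $L_\lambda\subset\overline{\mathrm{bar}(\bP)}$ after taking closures.
\item[(ii)] If $u\notin L_\lambda$, then $\lambda(E_u(t))=\infty$ for arbitrarily large $t$, hence for all $t$ by monotonicity. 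So $\eta(E_u(t))=\infty$ a.s.\ for each $t$. In particular $\eta(E_u(n))>0$ for all $n$ a.s., which gives $h(\bP,u)=\infty$.
\end{itemize}
The simultaneity issue in (ii) is handled the same way as in (i): the complement of $L_\lambda$ can be approximated by a countable set, using that $u\notin L_\lambda$ is an open-type condition in a suitable sense. Alternatively, by the previous lemma $C_\lambda$ is deterministic, so I only need the inclusion $\overline{\mathrm{bar}(\bP)}\subset \overline{L_\lambda}$ on a countable dense set.

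Combining (i) and (ii) gives $\overline{\mathrm{bar}(\bP)}=\overline{L_\lambda}$ a.s. Taking polars,
\begin{displaymath}
  C_\lambda=\mathrm{bar}(\bP)^o=(L_\lambda)^o.
\end{displaymath}

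\textbf{Main obstacle.} The hard step is the uniformity in $u$, i.e.\ replacing "for each $u$ a.s." by "a.s.\ for all $u$" when comparing the two cones, because neither cone need be closed. I would first try a countable dense set of directions, using that polars only see closures. For boundary directions I would rely on monotonicity of $E_u(t)$ under small perturbations of $u$ combined with finiteness on compact pieces.
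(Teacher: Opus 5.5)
Your cone argument is fine; your inclusion $E_{su+(1-s)v}(t)\subset E_u(t)\cup E_v(t)$ is correct and cleaner than the paper's. Step (i) is also fine. The gap is the one you flagged yourself. Step (ii) only says that each \emph{fixed} $u\notin L_\lambda$ lies a.s.\ outside $\mathrm{bar}(\bP)$, and neither suggested repair turns this into the a.s.\ inclusion $\mathrm{bar}(\bP)\subset\overline{L_\lambda}$.

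\begin{enumerate}
\item Approximating the complement of $L_\lambda$ by a countable set does not work. That complement is not open in general. Even a countable dense subset of the open set $\R^d\setminus\overline{L_\lambda}$ may miss a random, possibly lower-dimensional cone $\mathrm{bar}(\bP)$, so it gives no control over $\mathrm{bar}(\bP)\setminus\overline{L_\lambda}$.
\item There is no monotonicity of $E_u(t)$ in $u$ to exploit, since half-spaces with non-parallel normals are never nested.
\item Testing on a countable dense subset of $K:=C_\lambda^o$ fails at boundary directions. Take $\lambda$ to be Lebesgue measure on $\{(x,y):y\ge x^2\}$. Then $K=\{u:u_2\le 0\}$, but $h(\bP,(1,0))=\infty$ a.s. So $(1,0)\in K$ lies neither in $\mathrm{bar}(\bP)$ nor in $L_\lambda$.
\end{enumerate}

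The missing idea is to combine the determinism of $C_\lambda$ (previous lemma) with relative interiors.
\begin{enumerate}
\item Since $C_\lambda$ is deterministic, so is $K=C_\lambda^o$, and a.s.\ $\overline{\mathrm{bar}(\bP)}=K$.
\item A convex set contains the relative interior of its closure. So on this single a.s.\ event, $\mathrm{ri}\,K\subset\mathrm{bar}(\bP)$ holds for all directions at once.
\item Hence every fixed $v\in\mathrm{ri}\,K$ has $\BP(h(\bP,v)<\infty)=1$, and your (ii) forces $v\in L_\lambda$. Thus $K=\overline{\mathrm{ri}\,K}\subset\overline{L_\lambda}$.
\item Conversely, each $u\in L_\lambda$ lies a.s.\ in $\mathrm{bar}(\bP)\subset K$, hence in $K$ because $K$ is deterministic.
\end{enumerate}
So $\overline{L_\lambda}=K$ and $C_\lambda=K^o=(L_\lambda)^o$, with no countable dense sets needed.

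The paper works directly with the deterministic cone $C_\lambda$ and a separating direction $u$, rather than with the barrier cone. Its step from $u\notin L_\lambda$ to the existence of $z\in C_\lambda$ with $\langle z,u\rangle>0$ has the same boundary-direction subtlety. It is repaired the same way, by choosing the separating $u$ in $\mathrm{ri}\,C_\lambda^o$.
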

\begin{proof}
  Since $E_{tu+(1-t)v}(t)\subset E_u(t/2)\cup E_v(t/2)$ for any
  $t\in(0,1)$, the convexity follows. The scaling property follows
  from $E_{su}(t)=E_u(st)$. 

  Assume that $x\in C_\lambda$ and $u\in L_\lambda$.  Assume that
  $\langle x,u\rangle> 0$. Since $u\in L_\lambda$, we have
  $\eta(E_u(t))<\infty$ a.s. for $t>t_0$. Therefore, the support
  function $h(\bP,u)$ is a.s.\ finite. Choose any $y\in\bP$. Since
  $x\in C_\lambda$, $y+sx\in\bP$ for all $s\geq 0$. Then
  $h(\bP,u)\geq \langle y+sx,u\rangle$ and the right-hand side becomes
  infinitely large. The obtained contradiction shows that
  $\langle x,u\rangle\leq 0$, and consequently
  $C_\lambda\subset (L_\lambda)^o$.

  Now we show that $(L_\lambda)^o\subset C_\lambda$. Assume
  $x\notin C_\lambda$. Since $C_\lambda$ is a closed convex cone,
  there is $u\neq 0$ such that $\langle x,u\rangle>0$ and $\langle
  z,u\rangle \leq 0$ for all $x\in C_\lambda$. If $u\notin L_\lambda$,
  then there are infinitely many Poisson points in $E_u(t)$ for all
  $t$. Then $\langle z,u\rangle>0$ for $z\in C_\lambda$. Hence, $u\in
  L_\lambda$. Since  $\langle x,u\rangle>0$, we have that $x\notin
  (L_\lambda)^o$. 
\end{proof}

\section*{Acknowledgement}

The authors are grateful to Ferenc Fodor and Viktor Vigh (University
of Szeged) for suggesting an idea for the proof of the stopping set
property in the ball hulls setting. Furthermore, they thank Christoph
Th\"ale for suggesting the tessellation example.


\end{document}